\numberwithin{equation}{section}
\begin{document}

\title{Multilinear $\theta$-type Calder\'on--Zygmund operators and commutators on products of weighted Morrey spaces}
\author{Xia Han and Hua Wang \footnote{In memory of Li Xue. E-mail address: 1044381894@qq.com, wanghua@pku.edu.cn.}\\
\footnotesize{School of Mathematics and System Sciences, Xinjiang University, Urumqi 830046, P. R. China}}
\date{}
\maketitle
\begin{abstract}
In this paper, we consider the boundedness properties of multilinear $\theta$-type Calder\'on--Zygmund operators $T_\theta$ recently introduced in the literature. First, we prove strong type and weak type estimates for multilinear $\theta$-type Calder\'on--Zygmund operators on products of weighted Morrey spaces with multiple weights. Then we discuss strong type estimates for both multilinear commutators and iterated commutators of $T_\theta$ on products of these spaces with multiple weights. Furthermore, the weak end-point estimates for commutators of $T_\theta$ and pointwise multiplication with functions in bounded mean oscillation are established too.\\
MSC(2020): 42B20; 42B25; 47B38; 47G10\\
Keywords: Multilinear $\theta$-type Calder\'on--Zygmund operators; multilinear commutators; iterated commutators; weighted Morrey spaces; multiple weights; Orlicz spaces
\end{abstract}
\tableofcontents
\section{Introduction}
\label{intro}
In this paper, the symbols $\mathbb R$ and $\mathbb N$ stand for the sets of all real numbers and natural numbers, respectively. Let $\mathbb R^n$ be the $n$-dimensional Euclidean space with the Euclidean norm $|\cdot|$ and the Lebesgue measure $dx$. Let $m\in\mathbb N$ and $(\mathbb R^n)^m=\overbrace{\mathbb R^n\times\cdots\times\mathbb R^n}^m$ be the $m$-fold product space. We denote by $\mathscr S(\mathbb R^n)$ the space of all Schwartz functions on $\mathbb R^n$ and by $\mathscr S'(\mathbb R^n)$ its dual space, the set of all tempered distributions on $\mathbb R^n$.
Calder\'on--Zygmund singular integral operators and their generalizations on the Euclidean space $\mathbb R^n$ have been extensively studied (see \cite{duoand,garcia,grafakos2,stein2} for instance). In particular, Yabuta \cite{yabuta} introduced certain $\theta$-type Calder\'on--Zygmund operators to facilitate his study of certain classes of pseudo-differential operators. Following the terminology of Yabuta \cite{yabuta}, we introduce the so-called $\theta$-type Calder\'on--Zygmund operators as follows.

\newtheorem{defn}{Definition}[section]

\begin{defn}
Let $\theta$ be a nonnegative, nondecreasing function on $\mathbb R^+:=(0,+\infty)$ with $0<\theta(1)<+\infty$ and
\begin{equation*}
\int_0^1\frac{\theta(t)}{t}\,dt<+\infty.
\end{equation*}
A measurable function $K(x,y)$ on $\mathbb R^n\times\mathbb R^n\setminus\{(x,y):x=y\}$ is said to be a $\theta$-type Calder\'on--Zygmund kernel, if there exists a constant $A>0$ such that
\begin{enumerate}
  \item $\big|K(x,y)\big|\leq\frac{A}{|x-y|^{n}}$, \quad \mbox{for any}\, $x\neq y;$
  \item $\big|K(x,y)-K(z,y)\big|+\big|K(y,x)-K(y,z)\big|\leq \frac{A}{|x-y|^{n}}\cdot\theta\Big(\frac{|x-z|}{|x-y|}\Big)$,
  \quad \mbox{for}\, $|x-z|<\frac{|x-y|}{2}$.
\end{enumerate}
\end{defn}
\begin{defn}
Let ${\mathcal T}_\theta$ be a linear operator from $\mathscr S(\mathbb R^n)$ into its dual $\mathscr S'(\mathbb R^n)$. We say that ${\mathcal T}_\theta$ is a $\theta$-type Calder\'on--Zygmund operator with associated kernel $K$ if
\begin{enumerate}
  \item ${\mathcal T}_\theta$ can be extended to be a bounded linear operator on $L^2(\mathbb R^n);$
  \item for any $f\in C^\infty_0(\mathbb R^n)$ and for all $x\notin\mathrm{supp\,}f$, there is a $\theta$-type Calder\'on--Zygmund kernel $K(x,y)$ such that
\begin{equation*}
{\mathcal T}_\theta f(x):=\int_{\mathbb R^n}K(x,y)f(y)\,dy,
\end{equation*}
where $C^\infty_0(\mathbb R^n)$ is the space consisting of all infinitely differentiable functions on $\mathbb R^n$ that have compact support.
\end{enumerate}
\end{defn}
Note that the classical Calder\'on--Zygmund operator with standard kernel (see \cite{duoand,garcia}) is a special case of $\theta$-type operator ${\mathcal T}_{\theta}$ when $\theta(t)=t^{\delta}$ with $0<\delta\leq1$.

In 2009, Maldonado and Naibo \cite{ma} considered the bilinear $\theta$-type Calder\'on--Zygmund operators which are natural generalizations of the linear case, and established weighted norm inequalities for bilinear $\theta$-type Calder\'on--Zygmund operators on products of weighted Lebesgue spaces with Muckenhoupt weights. Moreover, they applied these operators to the study of certain paraproducts and bilinear pseudo-differential operators with mild regularity. Later, in 2014, Lu and Zhang \cite{lu} introduced the general $m$-linear $\theta$-type Calder\'on--Zygmund operators and their commutators for $m\geq2$, and established boundedness properties of these multilinear operators and multilinear commutators on products of weighted Lebesgue spaces with multiple weights. In addition, they gave some applications to the paraproducts and bilinear pseudo-differential operators with mild regularity and their commutators too. Following \cite{lu}, we now give the definition of the multilinear $\theta$-type Calder\'on--Zygmund operators.
\begin{defn}
Let $\theta$ be a nonnegative, nondecreasing function on $\mathbb R^+$ with $0<\theta(1)<+\infty$ and
\begin{equation}\label{theta1}
\int_0^1\frac{\theta(t)}{t}\,dt<+\infty.
\end{equation}
A measurable function $K(x,y_1,\ldots,y_m)$, defined away from the diagonal $x=y_1=\cdots=y_m$ in $(\mathbb R^n)^{m+1}$, is called an $m$-linear $\theta$-type Calder\'on--Zygmund kernel, if there exists a constant $A>0$ such that
\begin{enumerate}
  \item
\begin{equation}\label{size}
\big|K(x,y_1,\ldots,y_m)\big|\leq\frac{A}{(|x-y_1|+\cdots+|x-y_m|)^{mn}}
\end{equation}
for all $(x,y_1,\dots,y_m)\in(\mathbb R^n)^{m+1}$ with $x\neq y_k$ for some $k\in\{1,2,\dots,m\}$, and
  \item
\begin{equation}
\begin{split}
&\big|K(x,y_1,\dots,y_m)-K(x',y_1,\dots,y_m)\big|\\
\leq &\frac{A}{(|x-y_1|+\cdots+|x-y_m|)^{mn}}\cdot\theta\bigg(\frac{|x-x'|}{|x-y_1|+\cdots+|x-y_m|}\bigg)
\end{split}
\end{equation}
whenever $|x-x'|\leq\frac{\,1\,}{2}\max_{1\leq i\leq m}|x-y_i|$, and
  \item for each fixed $k$ with $1\leq k\leq m$,
\begin{equation}
\begin{split}
&\big|K(x,y_1,\dots,y_k,\dots,y_m)-K(x,y_1,\dots,y'_k,\dots,y_m)\big|\\
\leq & \frac{A}{(|x-y_1|+\cdots+|x-y_m|)^{mn}}\cdot\theta\bigg(\frac{|y_k-y'_k|}{|x-y_1|+\cdots+|x-y_m|}\bigg)
\end{split}
\end{equation}
whenever $|y_k-y'_k|\leq\frac{\,1\,}{2}\max_{1\leq i\leq m}|x-y_i|$.
\end{enumerate}
\end{defn}

\begin{defn}
Let $m\in\mathbb N$ and $T_{\theta}$ be an $m$-linear operator initially defined on the $m$-fold product of Schwartz spaces and taking values into the space of tempered distributions, i.e.,
\begin{equation*}
T_{\theta}:\overbrace{\mathscr S(\mathbb R^n)\times\cdots\times\mathscr S(\mathbb R^n)}^m\to\mathscr S'(\mathbb R^n).
\end{equation*}
We say that $T_\theta$ is an $m$-linear $\theta$-type Calder\'on--Zygmund operator if
\begin{enumerate}
  \item $T_\theta$ can be extended to be a bounded multilinear operator from $L^{q_1}(\mathbb R^n)\times\cdots\times L^{q_m}(\mathbb R^n)$ into $L^q(\mathbb R^n)$ for some $q_1,\ldots,q_m\in[1,+\infty)$ and $q\in[1/m,+\infty)$ with $1/q=\sum_{k=1}^m 1/{q_k};$
  \item for any given $m$-tuples $\vec{f}=(f_1,\ldots,f_m)$, there is an $m$-linear $\theta$-type Calder\'on--Zygmund kernel $K(x,y_1,\dots,y_m)$ such that
\begin{equation*}
\begin{split}
T_\theta(\vec{f})(x)&=T_\theta(f_1,\dots,f_m)(x)\\
&:=\int_{(\mathbb R^n)^m}K(x,y_1,\dots,y_m)f_1(y_1)\cdots f_m(y_m)\,dy_1\cdots dy_m
\end{split}
\end{equation*}
whenever $x\notin\bigcap_{k=1}^m\mathrm{supp\,}f_k$ and each $f_k\in C^\infty_0(\mathbb R^n)$ for $k=1,2,\dots,m$.
\end{enumerate}
\end{defn}

\newtheorem{theorem}{Theorem}[section]

\newtheorem{corollary}{Corollary}[section]

\newtheorem{rek}{Remark}[section]

\newtheorem{lemma}{Lemma}[section]

We note that, if we simply take $\theta(t)=t^{\varepsilon}$ for some $0<\varepsilon\leq1$, then the multilinear $\theta$-type operator $T_{\theta}$ is exactly the multilinear Calder\'on--Zygmund operator, which was systematically studied by many authors. There is a vast literature of results of this nature, pioneered by the work of Grafakos and Torres \cite{grafakos}, we refer the reader to \cite{grafakos3,lerner,perez3} and the references therein for more details. In 2014, the following weighted strong-type and weak-type estimates of multilinear $\theta$-type Calder\'on--Zygmund operators on products of weighted Lebesgue spaces were proved by Lu and Zhang in \cite{lu}.

\begin{theorem}[\cite{lu}]\label{strong}
Let $m\in\mathbb N$ and $T_{\theta}$ be an $m$-linear $\theta$-type Calder\'on--Zygmund operator with $\theta$ satisfying the condition \eqref{theta1}. If $p_1,\ldots,p_m\in(1,+\infty)$ and $p\in(1/m,+\infty)$ with $1/p=\sum_{k=1}^m 1/{p_k}$, and $\vec{w}=(w_1,\ldots,w_m)$ satisfies the multilinear $A_{\vec{P}}$ condition, then there exists a constant $C>0$ independent of $\vec{f}=(f_1,\ldots,f_m)$ such that
\begin{equation*}
\big\|T_{\theta}(\vec{f})\big\|_{L^p(\nu_{\vec{w}})}\le C\prod_{k=1}^m\big\|f_k\big\|_{L^{p_k}(w_k)},
\end{equation*}
where $\nu_{\vec{w}}=\prod_{k=1}^m w_k^{p/{p_k}}$.
\end{theorem}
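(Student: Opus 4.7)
The plan is to adapt the sharp-maximal-function framework of Lerner--Ombrosi--P\'erez--Torres--Trujillo-Gonz\'alez for multilinear Calder\'on--Zygmund operators to the weaker $\theta$-type smoothness. Writing
\[
\mathcal{M}(\vec f)(x):=\sup_{Q\ni x}\prod_{k=1}^m\frac{1}{|Q|}\int_Q|f_k(y)|\,dy
\]
for the multilinear Hardy--Littlewood maximal operator, and $M^{\#}_{\delta}g:=(M^{\#}(|g|^{\delta}))^{1/\delta}$ for the Fefferman--Stein sharp maximal operator at level $\delta$, the central technical output I aim for is the pointwise sharp-function estimate
\[
M^{\#}_{\delta}\bigl(T_\theta(\vec f)\bigr)(x)\le C\,\mathcal{M}(\vec f)(x) \qquad \text{for every } \; 0<\delta<1/m.
\]

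Granted this estimate, the theorem follows by chaining together three standard ingredients. First, any multiple weight $\vec w\in A_{\vec P}$ satisfies $\nu_{\vec w}\in A_{mp}\subset A_\infty$, so the Fefferman--Stein inequality $\|g\|_{L^p(\nu_{\vec w})}\le C\|M^{\#}_{\delta}g\|_{L^p(\nu_{\vec w})}$ is at one's disposal. Second, the multilinear maximal operator $\mathcal{M}$ is bounded from $L^{p_1}(w_1)\times\cdots\times L^{p_m}(w_m)$ into $L^p(\nu_{\vec w})$ for the same class of multiple weights. Third, an approximation argument based on the unweighted strong bound in Definition~1.4 ensures that $\|T_\theta(\vec f)\|_{L^p(\nu_{\vec w})}$ is finite on a dense subclass of inputs, so that the Fefferman--Stein step applies legitimately. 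Concatenating these yields $\|T_\theta(\vec f)\|_{L^p(\nu_{\vec w})}\le C\|\mathcal{M}(\vec f)\|_{L^p(\nu_{\vec w})}\le C\prod_k\|f_k\|_{L^{p_k}(w_k)}$, which is the desired strong bound.

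The main obstacle is proving the sharp-function estimate. Fix $x$ and a cube $Q\ni x$, set $Q^*:=8\sqrt n\,Q$, and decompose $f_k=f_k^0+f_k^\infty$ with $f_k^0:=f_k\chi_{Q^*}$. Multilinearity produces the $2^m$-fold expansion
\[
T_\theta(\vec f)=\sum_{\vec\alpha\in\{0,\infty\}^m}T_\theta\bigl(f_1^{\alpha_1},\ldots,f_m^{\alpha_m}\bigr).
\]
The fully local term $\vec\alpha=\vec 0$ is handled by Kolmogorov's inequality (applicable since $\delta<1/m$) together with the weak endpoint bound $T_\theta:L^1\times\cdots\times L^1\to L^{1/m,\infty}$; this endpoint bound is itself established by an $m$-linear Calder\'on--Zygmund decomposition using only the kernel size and $\theta$-regularity. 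For every other $\vec\alpha$, I would subtract the constant $c_{\vec\alpha}:=T_\theta(f_1^{\alpha_1},\ldots,f_m^{\alpha_m})(x_0)$ at a fixed $x_0\in Q$, expand the kernel difference over dyadic annuli $2^{j+1}Q^*\setminus 2^jQ^*$, and invoke the $\theta$-smoothness of $K$ in the appropriate variable (the first variable, or the $y_k$ for those indices with $\alpha_k=\infty$). Each annulus $j\ge 1$ then contributes a factor $\theta(C 2^{-j})$ multiplying a product of averages of $|f_k|$ over $2^{j+1}Q^*$, which in turn is dominated by $\mathcal{M}(\vec f)(x)$. Summing,
\[
\sum_{j\ge 1}\theta(C 2^{-j})\;\lesssim\;\int_0^1\frac{\theta(t)}{t}\,dt<+\infty
\]
by the Dini hypothesis \eqref{theta1}; this is the single place where the $\theta$-type condition is used. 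The delicate technical point is organizing the annular decomposition for the $2^m-1$ non-local terms so that on each summand exactly one $\theta$-factor is generated; this dictates, for each $\vec\alpha$, the correct variable in which to exploit the smoothness of $K$.
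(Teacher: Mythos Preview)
The paper does not prove Theorem~\ref{strong}; it is quoted verbatim from Lu--Zhang \cite{lu} as background. Your sketch is essentially the argument carried out in \cite{lu}, which in turn adapts the Lerner--Ombrosi--P\'erez--Torres--Trujillo-Gonz\'alez framework to $\theta$-type kernels, so in that sense you are on the right track and there is nothing to compare against in the present paper.

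One small correction to your outline: in the sharp-function estimate, for every non-local term $\vec\alpha\neq\vec 0$ the constant you subtract is $T_\theta(f_1^{\alpha_1},\ldots,f_m^{\alpha_m})(x_0)$, and the kernel difference is $K(z,\vec y)-K(x_0,\vec y)$ with $z,x_0\in Q$. Hence it is always the smoothness of $K$ in the \emph{first} variable that produces the factor $\theta(C2^{-j})$; the choice of variable does not depend on $\vec\alpha$. The smoothness in the $y_k$-variables enters elsewhere, namely in the Calder\'on--Zygmund decomposition proving the unweighted weak endpoint $L^1\times\cdots\times L^1\to L^{1/m,\infty}$ that feeds Kolmogorov's inequality for the fully local term. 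With that clarification your plan is correct.
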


\begin{theorem}[\cite{lu}]\label{weak}
Let $m\in\mathbb N$ and $T_{\theta}$ be an $m$-linear $\theta$-type Calder\'on--Zygmund operator with $\theta$ satisfying the condition \eqref{theta1}. If $p_1,\ldots,p_m\in[1,+\infty)$, $\min\{p_1,\ldots,p_m\}=1$ and $p\in[1/m,+\infty)$ with $1/p=\sum_{k=1}^m 1/{p_k}$, and $\vec{w}=(w_1,\ldots,w_m)$ satisfies the multilinear $A_{\vec{P}}$ condition, then there exists a constant $C>0$ independent of $\vec{f}=(f_1,\ldots,f_m)$ such that
\begin{equation*}
\big\|T_{\theta}(\vec{f})\big\|_{WL^p(\nu_{\vec{w}})}\le C\prod_{k=1}^m\big\|f_k\big\|_{L^{p_k}(w_k)},
\end{equation*}
where $\nu_{\vec{w}}=\prod_{k=1}^m w_k^{p/{p_k}}$.
\end{theorem}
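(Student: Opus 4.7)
The plan is to combine the strong-type bound of Theorem~\ref{strong} with a multilinear Calder\'on--Zygmund decomposition of the input functions, using the Dini-type regularity $\int_0^1\theta(t)/t\,dt<+\infty$ to control the mixed pieces. By the homogeneity of the statement in each $f_k$, it suffices to show that, under the normalization $\prod_{k=1}^m\|f_k\|_{L^{p_k}(w_k)}=1$, one has $\nu_{\vec w}(\{x:|T_\theta(\vec f)(x)|>1\})\leq C$. I would then perform a standard Calder\'on--Zygmund decomposition of each $f_k$ at a sufficiently small level $\alpha$, to be chosen later, writing $f_k=g_k+b_k$ with $b_k=\sum_j b_{k,j}$, each $b_{k,j}$ of mean zero and supported on a cube $Q_{k,j}$ satisfying $|Q_{k,j}|^{-1}\int_{Q_{k,j}}|f_k|\lesssim\alpha$, the $Q_{k,j}$ pairwise disjoint, and $\|g_k\|_\infty\lesssim\alpha$.

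Let $Q_{k,j}^{*}$ denote a suitable dilate of $Q_{k,j}$ (chosen to pick up the kernel regularity off the support of $b_{k,j}$) and set $E^{*}=\bigcup_{k,j}Q_{k,j}^{*}$. Using the $A_{\vec P}$ condition on $\vec w$ together with the $\alpha$-sized averages, a direct estimate on $\nu_{\vec w}(E^{*})$ gives a bound comparable to a positive power of $\alpha$ times the normalized product, which is of the right size after a suitable choice of $\alpha$. On $(E^{*})^c$, multilinearity gives
\begin{equation*}
T_\theta(\vec f)=\sum_{(h_1,\ldots,h_m)\in\{g,b\}^m}T_\theta(h_1,\ldots,h_m),
\end{equation*}
so I need to estimate the $\nu_{\vec w}$-measure of the level sets of each of the $2^m$ summands. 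The all-good piece $T_\theta(g_1,\ldots,g_m)$ is handled by Chebyshev combined with Theorem~\ref{strong} applied at an auxiliary tuple $\vec{\tilde P}=(\tilde p_1,\ldots,\tilde p_m)$ with every $\tilde p_k>1$ and $\vec w\in A_{\vec{\tilde P}}$ (available from the openness of the $A_{\vec P}$ class and the assumption $\min p_k=1$), together with the standard bound $\|g_k\|_{L^{\tilde p_k}(w_k)}^{\tilde p_k}\lesssim\alpha^{\tilde p_k-p_k}$ coming from the CZ decomposition; this produces an acceptable positive power of $\alpha$.

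The main obstacle is the $2^m-1$ mixed terms containing at least one bad factor. For any such term I would exploit the cancellation $\int b_{k,j}=0$ slot by slot: replace $K(x,y_1,\ldots,y_m)$ on $Q_{k,j}$ by $K(x,y_1,\ldots,y_m)-K(x,\ldots,c_{k,j},\ldots,y_m)$, where $c_{k,j}$ is the center of $Q_{k,j}$, and apply the $\theta$-regularity condition of the kernel in the $y_k$-slot. Because $x\in(E^{*})^c$, the ratio $|y_k-c_{k,j}|/(\sum_i|x-y_i|)$ is controlled by $2^{-\nu}$ on successive dyadic annuli around $Q_{k,j}$, so the resulting factor $\theta(2^{-\nu})$ can be summed using $\int_0^1\theta(t)/t\,dt<+\infty$, which is precisely what the Dini assumption is designed for. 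Integrating the remaining $g$- or $b$-factors then yields a pointwise bound by a product of (shifted) Hardy--Littlewood maximal functions of $|f_k|$, and the $A_{\vec P}$ condition on $\vec w$ converts the resulting weighted integral into the target constant. Iterating this cancellation argument through every bad coordinate simultaneously, while tracking the $\theta$-factor that accumulates at each step, is the chief technical difficulty, but the Dini hypothesis on $\theta$ is tailored exactly to make this iteration close.
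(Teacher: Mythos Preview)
The paper does not contain a proof of this statement: Theorem~\ref{weak} is quoted from Lu--Zhang \cite{lu} as a known input and is used as a black box in the proofs of Theorems~\ref{mainthm:1}--\ref{mainthm:4}. So there is no ``paper's own proof'' to compare against.

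That said, your outline is essentially the standard Calder\'on--Zygmund decomposition argument that \cite{lu} (following Grafakos--Torres \cite{grafakos} and Lerner et al.\ \cite{lerner}) actually carries out, and the use of the Dini condition \eqref{theta1} to sum $\sum_\nu\theta(2^{-\nu})$ in the bad pieces is exactly right. One point deserves care: your treatment of the all-good piece via ``openness of $A_{\vec P}$'' to move to an auxiliary tuple $\vec{\tilde P}$ with every $\tilde p_k>1$ is not the route taken in \cite{lu,lerner}. Openness of the multilinear class $A_{\vec P}$ is true but is itself a nontrivial fact (it requires a multilinear reverse H\"older argument), and at the time of \cite{lerner} it was not used for this purpose. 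The argument in \cite{lu,lerner} instead controls the good part by the multilinear maximal function $\mathcal{M}$ and invokes the weighted weak-type bound for $\mathcal{M}$ established directly in \cite{lerner}; alternatively, one first proves the unweighted endpoint $L^1\times\cdots\times L^1\to L^{1/m,\infty}$ via the built-in $L^{q_1}\times\cdots\times L^{q_m}\to L^q$ bound in the definition of $T_\theta$, and then passes to weights. Either of these avoids appealing to $A_{\vec P}$-openness. Apart from this, your sketch matches the literature.
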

For any given $p\in(0,+\infty)$ and $w$(weight function), the space $L^p(w)$ is defined as the set of all integrable functions $f$ on $\mathbb R^n$ such that
\begin{equation*}
\|f\|_{L^p(w)}:=\bigg(\int_{\mathbb R^n}|f(x)|^pw(x)\,dx\bigg)^{1/p}<+\infty,
\end{equation*}
and the weak space $WL^{p}(w)$ is defined as the set of all measurable functions $f$ on $\mathbb R^n$ such that
\begin{equation*}
\|f\|_{WL^p(w)}:=\sup_{\lambda>0}\lambda\cdot w\big(\big\{x\in\mathbb R^n:|f(x)|>\lambda\big\}\big)^{1/p}<+\infty,
\end{equation*}
where $w(E):=\int_{E}w(x)\,dx$ for a Lebesgue measurable set $E\subset\mathbb R^n$. When $w\equiv1$, we denote simply by $L^p(\mathbb R^n)$ and $WL^p(\mathbb R^n)$.

\begin{rek}
For the linear case $m=1$, the above weighted results were given by Quek and Yang in \cite{quek}. For the bilinear case $m=2$, Theorems \ref{strong} and \ref{weak} were proved by Maldonado and Naibo in \cite{ma} when some additional conditions imposed on $\theta$. And when $\theta(t)=t^{\varepsilon}$ for some $0<\varepsilon\leq1$, Theorems \ref{strong} and \ref{weak} were obtained by Lerner et al. \cite{lerner}.
\end{rek}

Next, we give the definition of the commutator for the multilinear $\theta$-type Calder\'on--Zygmund operator. Given a collection of locally integrable functions $\vec{b}=(b_1,\ldots,b_m)$, the $m$-linear commutator of $T_{\theta}$ with $\vec{b}$ is defined by
\begin{equation}\label{multicomm}
\begin{split}
\big[\Sigma\vec{b},T_\theta\big](\vec{f})(x)&=\big[\Sigma\vec{b},T_\theta\big](f_1,\ldots,f_m)(x)\\
&:=\sum_{k=1}^m\big[b_k,T_{\theta}\big]_{k}(f_1,\ldots,f_m)(x),
\end{split}
\end{equation}
where each term is the commutator of $b_k$ and $T_{\theta}$ in the $k$-th entry of $T_{\theta}$; that is,
\begin{equation*}
\begin{split}
&\big[b_k,T_{\theta}\big]_{k}(f_1,\ldots,f_m)(x)\\
&=b_k(x)\cdot T_{\theta}(f_1,\ldots,f_k,\dots,f_m)(x)-T_{\theta}(f_1,\dots,b_kf_k,\dots,f_m)(x).
\end{split}
\end{equation*}
Then, at a formal level
\begin{equation*}
\begin{split}
&\big[\Sigma\vec{b},T_\theta\big](\vec{f})(x)=\big[\Sigma\vec{b},T_\theta\big](f_1,\ldots,f_m)(x)\\
&=\int_{(\mathbb R^n)^m}\sum_{k=1}^m\big[b_k(x)-b_k(y_k)\big]K(x,y_1,\dots,y_m)f_1(y_1)\cdots f_m(y_m)\,dy_1\cdots dy_m.
\end{split}
\end{equation*}
Obviously, when $m=1$ in the above definition, this operator coincides with the linear commutator $[b,{\mathcal T}_{\theta}]$(see \cite{liu,zhang2}), which is defined by
\begin{equation*}
[b,{\mathcal T}_{\theta}](f):=b\cdot{\mathcal T}_{\theta}(f)-{\mathcal T}_{\theta}(bf).
\end{equation*}
Let us now recall the definition of the space of $\mathrm{BMO}(\mathbb R^n)$(see \cite{duoand,john}). A locally integrable function $b(x)$ is said to belong to $\mathrm{BMO}(\mathbb R^n)$ if it satisfies
\begin{equation*}
\|b\|_*:=\sup_{B}\frac{1}{|B|}\int_B|b(x)-b_B|\,dx<+\infty,
\end{equation*}
where the supremum is taken over all balls $B$ in $\mathbb R^n$, and $b_B$ stands for the average of $b$ over $B$, i.e.,
\begin{equation*}
b_B:=\frac{1}{|B|}\int_B b(y)\,dy.
\end{equation*}
In the multilinear setting, we say that $\vec{b}=(b_1,\ldots,b_m)\in \mathrm{BMO}^m$, if each $b_k\in \mathrm{BMO}(\mathbb R^n)$ for $k=1,2,\dots,m$. For convenience, we will use the following notation
\begin{equation*}
\big\|\vec{b}\big\|_{\mathrm{BMO}^m}:=\max_{1\leq k\leq m}\big\|b_k\big\|_{\ast},\quad\mbox{for} \; \vec{b}=(b_1,\ldots,b_m)\in \mathrm{BMO}^m.
\end{equation*}
In 2014, Lu and Zhang \cite{lu} also proved some weighted estimate and $L\log L$-type estimate for multilinear commutators $\big[\Sigma\vec{b},T_\theta\big]$ defined in \eqref{multicomm} under a stronger condition \eqref{theta2} assumed on $\theta$, if $\vec{b}\in \mathrm{BMO}^m$.

\begin{theorem}[\cite{lu}]\label{comm}
Let $m\in\mathbb N$ and $\big[\Sigma\vec{b},T_\theta\big]$ be the $m$-linear commutator generated by $\theta$-type Calder\'on--Zygmund operator $T_{\theta}$ and $\vec{b}=(b_1,\ldots,b_m)\in\mathrm{BMO}^m;$ let $\theta$ satisfy
\begin{equation}\label{theta2}
\int_0^1\frac{\theta(t)\cdot(1+|\log t|)}{t}\,dt<+\infty.
\end{equation}
If $p_1,\ldots,p_m\in(1,+\infty)$ and $p\in(1/m,+\infty)$ with $1/p=\sum_{k=1}^m 1/{p_k}$, and $\vec{w}=(w_1,\ldots,w_m)\in A_{\vec{P}}$, then there exists a constant $C>0$ independent of $\vec{b}$ and $\vec{f}=(f_1,\ldots,f_m)$ such that
\begin{equation*}
\big\|\big[\Sigma\vec{b},T_\theta\big](\vec{f})\big\|_{L^p(\nu_{\vec{w}})}\leq C\cdot\big\|\vec{b}\big\|_{\mathrm{BMO}^m}\prod_{k=1}^m\big\|f_k\big\|_{L^{p_k}(w_k)},
\end{equation*}
where $\nu_{\vec{w}}=\prod_{k=1}^m w_k^{p/{p_k}}$.
\end{theorem}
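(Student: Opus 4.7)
My plan is to reduce the theorem to a pointwise sharp maximal function estimate, then invoke a Fefferman--Stein type inequality for multiple weights. By the decomposition \eqref{multicomm}, it suffices to prove that for each fixed $k\in\{1,\dots,m\}$,
\begin{equation*}
\big\|\big[b_k,T_\theta\big]_k(\vec{f})\big\|_{L^p(\nu_{\vec{w}})}\le C\,\|b_k\|_{\ast}\prod_{j=1}^m\|f_j\|_{L^{p_j}(w_j)},
\end{equation*}
and then sum in $k$. Fix $k$ and write $b=b_k$. The key step is a pointwise bound of the form
\begin{equation*}
M^{\#}_{\delta}\big(\big[b,T_\theta\big]_k(\vec{f})\big)(x)\le C\,\|b\|_{\ast}\Big(\mathcal{M}_{L(\log L)}(\vec{f})(x)+M_{\eta}\big(T_\theta(\vec{f})\big)(x)\Big),
\end{equation*}
for suitably chosen $0<\delta<\eta<1$, where $M^{\#}_{\delta}(g)=[M^{\#}(|g|^{\delta})]^{1/\delta}$, $M_{\eta}g=[M(|g|^{\eta})]^{1/\eta}$, and $\mathcal{M}_{L(\log L)}$ is the multilinear maximal operator defined via the Luxemburg averages $\|\cdot\|_{L(\log L),B}$ in each entry.

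To prove this pointwise estimate, I would fix a ball $B=B(x_0,r)$ containing $x$, subtract a suitable constant $c_B$ depending on $b_B$ and the regularized operator on the distant pieces, and split each $f_j=f_j^0+f_j^\infty$ with $f_j^0=f_j\chi_{2B}$. Expanding $\prod_j f_j=\prod_j(f_j^0+f_j^\infty)$ yields $2^m$ terms. The purely local term is handled by Kolmogorov's inequality together with the weak $L^{1/m}$ bound of $T_\theta$ (which follows from Theorem \ref{weak}), while the purely non-local term is comparable to $M(T_\theta(\vec{f}))$. Each mixed term requires the kernel smoothness: using condition (2) or (3) of the kernel definition on the non-local coordinates, one bounds the difference of $K$ by $A\,(|x-y_{i^*}|)^{-mn}\theta(|x-x_0|/|x-y_{i^*}|)$ for the appropriate $i^*$. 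After the standard annular decomposition $2^{\ell+1}B\setminus 2^\ell B$, the commutator structure contributes an extra factor of $\|b\|_{\ast}(1+\ell)$ from the BMO mean oscillations, so the geometric sum becomes
\begin{equation*}
\sum_{\ell=1}^{\infty}\theta(2^{-\ell})(1+\ell)\,\|\vec{f}\|_{L(\log L),2^{\ell+1}B},
\end{equation*}
which is finite precisely by the strengthened hypothesis \eqref{theta2}; this is where that condition is essential.

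Once the sharp function estimate is established, I would apply the Fefferman--Stein inequality $\|g\|_{L^p(\nu_{\vec{w}})}\le C\|M^{\#}_{\delta}g\|_{L^p(\nu_{\vec{w}})}$ (valid for $\nu_{\vec{w}}\in A_\infty$, which holds since $\vec{w}\in A_{\vec{P}}$ implies $\nu_{\vec{w}}\in A_{mp}\subset A_\infty$), after first checking the a priori finiteness $\|M_\delta([b,T_\theta]_k(\vec{f}))\|_{L^p(\nu_{\vec{w}})}<\infty$ by a truncation argument. The term $\|M_\eta(T_\theta(\vec{f}))\|_{L^p(\nu_{\vec{w}})}$ is controlled by $\|T_\theta(\vec{f})\|_{L^p(\nu_{\vec{w}})}$ using $A_{\vec{P}}\subset A_\infty$ properties and Theorem \ref{strong}, and the term $\|\mathcal{M}_{L(\log L)}(\vec{f})\|_{L^p(\nu_{\vec{w}})}$ is bounded by $\prod_j\|f_j\|_{L^{p_j}(w_j)}$ by the known boundedness of the multilinear $L(\log L)$-maximal operator for multiple weights in $A_{\vec{P}}$.

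The main obstacle is the verification of the sharp function estimate, specifically the careful accounting of the mixed $2^m-2$ off-diagonal terms: one must select the correct index $i^*$ maximizing $|x-y_{i^*}|$ in each annulus, apply the appropriate smoothness condition ((2) or (3) of the kernel), and use a generalized Hölder inequality in Orlicz norms (with the pair $(\Phi(t)=t\log(e+t),\bar\Phi(t)=e^t-1)$) to absorb both the extra $(1+\ell)$ coming from $\|b-b_{2^{\ell+1}B}\|_{\exp L,2^{\ell+1}B}$ and the logarithmic factor built into the definition of $\mathcal{M}_{L(\log L)}$. Matching the combinatorics of the BMO growth against the decay $\theta(2^{-\ell})$ is exactly where the integral condition \eqref{theta2} becomes unavoidable.
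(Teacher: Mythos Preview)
Your proposal is correct and follows the original strategy of \cite{lu}: a pointwise sharp maximal function estimate involving $\mathcal{M}_{L(\log L)}$ and $M_\eta(T_\theta(\vec{f}))$, followed by the Fefferman--Stein inequality and the $A_{\vec{P}}$-boundedness of $\mathcal{M}_{L(\log L)}$ from \cite{lerner}. In particular, your identification of the annular sum $\sum_{\ell}\theta(2^{-\ell})(1+\ell)$ as the place where \eqref{theta2} enters is exactly right.

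The paper, however, takes a genuinely different route in its Appendix. Instead of a sharp function bound, it uses the Cauchy integral formula to write $b_k(x)-b_k(y_k)=\frac{1}{2\pi}\int_0^{2\pi}e^{e^{i\varphi_k}[b_k(x)-b_k(y_k)]}e^{-i\varphi_k}\,d\varphi_k$, which converts the commutator into an average of $T_\theta$ applied to the exponentially twisted functions $e^{-e^{i\varphi_k}b_k}f_k$, measured against the twisted weights $w_k\,e^{p_k\cos\varphi_k\,b_k}$. One then interpolates (Stein--Weiss, change of measures) between Theorem~\ref{strong} with weights $(w_k^{1+\varepsilon})$ (available by the self-improvement of $A_{\vec{P}}$) and Theorem~\ref{strong} with weights $e^{\eta_k\cos\varphi_k\,b_k}\in A_{p_k}$ (available for small $\|b_k\|_*$ by John--Nirenberg). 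This argument never touches the kernel regularity directly, so it only needs the basic Dini condition \eqref{theta1} rather than \eqref{theta2}; that is precisely what the paper advertises in Remark~1.6. Your approach is more hands-on and makes the role of \eqref{theta2} transparent, while the paper's interpolation trick is shorter and yields a strictly stronger conclusion.
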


\begin{theorem}[\cite{lu}]\label{Wcomm}
Let $m\in\mathbb N$ and $\big[\Sigma\vec{b},T_\theta\big]$ be the $m$-linear commutator generated by $\theta$-type Calder\'on--Zygmund operator $T_{\theta}$ and $\vec{b}=(b_1,\ldots,b_m)\in\mathrm{BMO}^m;$ let $\theta$ satisfy the condition \eqref{theta2}. If $p_k=1$, $k=1,2,\dots,m$ and $\vec{w}=(w_1,\ldots,w_m)\in A_{(1,\dots,1)}$, then for any given $\lambda>0$, there exists a constant $C>0$ independent of $\vec{b}$, $\vec{f}=(f_1,\ldots,f_m)$ and $\lambda$ such that
\begin{equation*}
\begin{split}
&\nu_{\vec{w}}\Big(\Big\{x\in\mathbb R^n:\big|\big[\Sigma\vec{b},T_\theta\big](\vec{f})(x)\big|>\lambda^m\Big\}\Big)\\
&\leq C\cdot\Phi\big(\big\|\vec{b}\big\|_{\mathrm{BMO}^m}\big)^{1/m}
\prod_{k=1}^m\bigg(\int_{\mathbb R^n}\Phi\bigg(\frac{|f_k(x)|}{\lambda}\bigg)w_k(x)\,dx\bigg)^{1/m},
\end{split}
\end{equation*}
where $\nu_{\vec{w}}=\prod_{k=1}^m w_k^{1/{m}}$, $\Phi(t):=t\cdot(1+\log^+t)$ and $\log^+t:=\max\{\log t,0\}$.
\end{theorem}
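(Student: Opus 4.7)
My plan is to follow the endpoint-commutator strategy of P\'erez--Pradolini and Lerner--Ombrosi--P\'erez--Torres--Trujillo-Gonz\'alez, adapted to the $\theta$-type kernel. The $m$-linearity of $T_\theta$ allows the rescaling $f_k\mapsto f_k/\lambda$, so one may assume $\lambda=1$. Since $[\Sigma\vec b,T_\theta]=\sum_{k=1}^m [b_k,T_\theta]_k$, a union bound on the super-level set and the fact that $\Phi$ is nondecreasing with $\|b_k\|_*\le\|\vec b\|_{\mathrm{BMO}^m}$ reduce matters to proving, for each fixed $k$,
\begin{equation*}
\nu_{\vec w}\bigl(\{x:|[b_k,T_\theta]_k(\vec f)(x)|>1/m\}\bigr)\lesssim \Phi(\|b_k\|_*)^{1/m}\prod_{j=1}^m\Bigl(\int_{\mathbb R^n}\Phi(|f_j|)w_j\Bigr)^{1/m}.
\end{equation*}

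\textbf{Calder\'on--Zygmund decomposition and exceptional set.} For every $j$, apply the Calder\'on--Zygmund decomposition to $|f_j|$ at an appropriate level (depending on $\|b_k\|_*$) to write $f_j=g_j+h_j$ with $h_j=\sum_i h_{j,i}$, where each $h_{j,i}$ is supported on a disjoint cube $Q_{j,i}$ with vanishing integral and controlled $L^1$ average, and $g_j$ is bounded. Set $\Omega^*=\bigcup_{j,i} Q^*_{j,i}$, a fixed dilate of the union of bad cubes. The multilinear H\"older inequality $\nu_{\vec w}\le\prod_j w_j^{1/m}$, the $A_1$ hypothesis on each $w_j$, and the $L\log L$ averaging bound valid at the chosen CZ level give $\nu_{\vec w}(\Omega^*)\lesssim \prod_j(\int \Phi(|f_j|)w_j)^{1/m}$, so it suffices to estimate $\nu_{\vec w}\bigl(\{x\in(\Omega^*)^c:|[b_k,T_\theta]_k(\vec f)(x)|>1/m\}\bigr)$.

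\textbf{The $2^m$ pieces.} Expanding $f_j=g_j+h_j$ in each slot of $[b_k,T_\theta]_k(\vec f)$ yields $2^m$ pieces. The all-good piece is handled via Chebyshev's inequality at some $p\in(1/m,+\infty)$ together with the strong-type bound of Theorem~\ref{comm} applied to the special case $\vec b=(0,\dots,b_k,\dots,0)$, combined with the uniform control on $g_j$ and the $A_1$ reverse inequality to absorb everything into $\int|f_j|w_j\leq\int\Phi(|f_j|)w_j$. For each mixed piece containing at least one $h_{j,i}$, the cancellation $\int h_{j,i}=0$ and the third kernel smoothness condition produce a factor $\theta(|y_j-y^0_{j,i}|/\max_i|x-y_i|)$ in front of the size bound \eqref{size}; when the $k$-th slot is itself bad, use $b_k(x)-b_k(y_k)=(b_k(x)-(b_k)_{Q_{k,i}})-(b_k(y_k)-(b_k)_{Q_{k,i}})$, handling the pointwise multiplier by the generalized H\"older inequality in Orlicz spaces (pairing $L\log L$ with $\exp L$ via John--Nirenberg).

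\textbf{Main obstacle.} The crux is estimating the bad pieces on $(\Omega^*)^c$ through a dyadic-annulus decomposition around $Q_{k,i}$: John--Nirenberg contributes a factor of order $\|b_k\|_*(1+s)$ on the $s$-th annulus while the kernel smoothness contributes $\theta(2^{-s})$, so the convergence of $\sum_{s\ge 1}\theta(2^{-s})(1+s)$ is exactly the content of the strengthened hypothesis \eqref{theta2}; this is the only step where \eqref{theta2} is essential and the weaker \eqref{theta1} would be insufficient. The scaling in $\|b_k\|_*$ is then reshuffled via the submultiplicativity $\Phi(ab)\le\Phi(a)\Phi(b)$ and coordinated with the $A_{(1,\dots,1)}$ structure of $\vec w$ to produce the symmetric factor $\Phi(\|\vec b\|_{\mathrm{BMO}^m})^{1/m}\prod_{j=1}^m(\int \Phi(|f_j|)w_j)^{1/m}$ on the right-hand side.
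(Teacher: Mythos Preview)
The paper does not supply its own proof of this statement: Theorem~\ref{Wcomm} is quoted verbatim from Lu--Zhang \cite{lu} (note the citation in the theorem header) and used only as a black box, in particular inside the proof of Theorem~\ref{mainthm:4}. So there is nothing in the present paper to compare your argument against.

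That said, your outline is the standard route taken in the original source \cite{lu} (which in turn adapts the Lerner--Ombrosi--P\'erez--Torres--Trujillo-Gonz\'alez argument from \cite{lerner} to the $\theta$-kernel setting): reduce to a single-entry commutator, perform a Calder\'on--Zygmund decomposition of each $f_j$, control the exceptional set via the $A_{(1,\dots,1)}$ condition, and treat the $2^m$ pieces with the kernel smoothness, cancellation, and John--Nirenberg. Your identification of the role of \eqref{theta2}---that the annulus sum $\sum_s (1+s)\theta(2^{-s})$ must converge---is exactly the point at which \cite{lu} invokes the strengthened Dini condition. One small remark: for the ``all-good'' piece you invoke Theorem~\ref{comm}, but the cleaner route in the original is to use the $L^{p_1}\times\cdots\times L^{p_m}\to L^{1/m,\infty}$ bound for $T_\theta$ itself (Theorem~\ref{weak}) after splitting $[b_k,T_\theta]_k(g_1,\dots,g_m)$ into $(b_k-c)T_\theta(\vec g)-T_\theta(g_1,\dots,(b_k-c)g_k,\dots,g_m)$, since the $g_j$ are bounded and Theorem~\ref{comm} already presupposes results at this level.
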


\begin{rek}
As is well known, (multilinear) commutator has a greater degree of singularity than the underlying (multilinear) $\theta$-type operator, so more regular condition imposed on $\theta(t)$ is reasonable. Obviously, our condition \eqref{theta2} is slightly stronger than the condition \eqref{theta1}. For such type of commutators, the condition that $\theta(t)$ satisfying \eqref{theta2} is needed in the linear case (see \cite{liu,zhang2} for more details), so does in the multilinear case. Moreover, it is straightforward to check that when $\theta(t)=t^{\varepsilon}$ for some $\varepsilon>0$,
\begin{equation*}
\int_0^1\frac{t^{\varepsilon}\cdot(1+|\log t|)}{t}\,dt=\int_0^1 t^{\varepsilon-1}\cdot\bigg(1+\log\frac{\,1\,}{t}\bigg)dt<+\infty.
\end{equation*}
Thus, the multilinear Calder\'on--Zygmund operator is also the multilinear $\theta$-type operator $T_{\theta}$ with $\theta(t)$ satisfying \eqref{theta2}.
\end{rek}

\begin{rek}
When $m=1$, the above weighted endpoint estimate for the linear commutator $[b,\mathcal{T}_{\theta}]$ was given by Zhang and Xu in \cite{zhang2} (for the unweighted case, see \cite{liu}). Since $\mathcal{T}_{\theta}$ is bounded on $L^p(w)$ for $1<p<+\infty$ and $w\in A_p$ as mentioned earlier, then by the well-known boundedness criterion for commutators of linear operators, which was obtained by Alvarez et al. in \cite{alvarez}, we know that $[b,\mathcal{T}_{\theta}]$ is also bounded on $L^p(w)$ for all $1<p<+\infty$ and $w\in A_p$, whenever $b\in \mathrm{BMO}(\mathbb R^n)$.
\end{rek}

\begin{rek}
When $m\geq2$, $w_1=\cdots=w_m\equiv1$ and $\theta(t)=t^{\varepsilon}$ for some $\varepsilon>0$, P\'erez and Torres \cite{perez3} proved that if $\vec{b}=(b_1,\ldots,b_m)\in\mathrm{BMO}^m$, then
\begin{equation*}
\big[\Sigma\vec{b},T_\theta\big]:L^{p_1}(\mathbb R^n)\times\cdots\times L^{p_m}(\mathbb R^n)\to L^p(\mathbb R^n)
\end{equation*}
for $1<p_k<+\infty$ and $1<p<+\infty$ with $1/p=1/{p_1}+\cdots+1/{p_m}$, where $k=1,2,\dots,m$. And when $m\geq2$ and $\theta(t)=t^{\varepsilon}$ for some $\varepsilon>0$, Theorems \ref{comm} and \ref{Wcomm} were obtained by Lerner et al. in \cite{lerner}. Namely, Lerner et al.\cite{lerner} proved that if $\vec{b}=(b_1,\ldots,b_m)\in\mathrm{BMO}^m$ and $\vec{w}=(w_1,\ldots,w_m)\in A_{\vec{P}}$, then
\begin{equation*}
\big[\Sigma\vec{b},T_\theta\big]:L^{p_1}(w_1)\times\cdots\times L^{p_m}(w_m)\to L^p(\nu_{\vec{w}})
\end{equation*}
for $1<p_k<+\infty$ and $1/m<p<+\infty$ with $1/p=1/{p_1}+\cdots+1/{p_m}$, where $k=1,2,\dots,m$.
\end{rek}

\begin{rek}
We will give alternative proof of Theorem $\ref{comm}$, which shows that the conclusion of Theorem $\ref{comm}$ still holds provided that $\theta(t)$ only fulfills \eqref{theta1}, see the Appendix section for more details.
\end{rek}

Motivated by \cite{perez} and \cite{lu}, we will consider another type of commutators on $\mathbb R^n$. Assume that $\vec{b}=(b_1,\dots,b_m)$ is a collection of locally integrable functions, we define the iterated commutator $\big[\Pi\vec{b},T_\theta\big]$ as
\begin{equation*}
\begin{split}
\big[\Pi\vec{b},T_\theta\big](\vec{f})(x)&=\big[\Pi\vec{b},T_\theta\big](f_1,\ldots,f_m)(x)\\
&:=[b_1,[b_2,\dots[b_{m-1},[b_m,T_{\theta}]_m]_{m-1}\dots]_2]_1(f_1,\ldots,f_m)(x),
\end{split}
\end{equation*}
where
\begin{equation*}
\begin{split}
&\big[b_k,T_{\theta}\big]_{k}(f_1,\ldots,f_m)(x)\\
&=b_k(x)\cdot T_{\theta}(f_1,\ldots,f_k,\dots,f_m)(x)-T_{\theta}(f_1,\dots,b_kf_k,\dots,f_m)(x).
\end{split}
\end{equation*}
Then $\big[\Pi\vec{b},T_\theta\big]$ could be expressed in the following way
\begin{equation}\label{iteratedc}
\begin{split}
&\big[\Pi\vec{b},T_\theta\big](\vec{f})(x)=\big[\Pi\vec{b},T_\theta\big](f_1,\ldots,f_m)(x)\\
&=\int_{(\mathbb R^n)^m}\prod_{k=1}^m\big[b_k(x)-b_k(y_k)\big]K(x,y_1,\dots,y_m)f_1(y_1)\cdots f_m(y_m)\,dy_1\cdots dy_m.
\end{split}
\end{equation}
Following the arguments used in \cite{perez} and \cite{lu} with some minor modifications, we can also establish the corresponding results (strong type and weak endpoint estimates) for iterated commutators of multilinear $\theta$-type Calder\'on--Zygmund operators.

\begin{theorem}\label{commwh}
Let $m\in\mathbb N$ and $\big[\Pi\vec{b},T_\theta\big]$ be the iterated commutator generated by $\theta$-type Calder\'on--Zygmund operator $T_{\theta}$ and $\vec{b}=(b_1,\ldots,b_m)\in\mathrm{BMO}^m;$ let $\theta$ satisfy the condition \eqref{theta1}.
If $p_1,\ldots,p_m\in(1,+\infty)$ and $p\in(1/m,+\infty)$ with $1/p=\sum_{k=1}^m 1/{p_k}$, and $\vec{w}=(w_1,\ldots,w_m)\in A_{\vec{P}}$, then there exists a constant $C>0$ independent of $\vec{b}$ and $\vec{f}=(f_1,\ldots,f_m)$ such that
\begin{equation*}
\big\|\big[\Pi\vec{b},T_\theta\big](\vec{f})\big\|_{L^p(\nu_{\vec{w}})}\leq C\cdot\prod_{k=1}^m\big\|b_k\big\|_{*}\prod_{k=1}^m\big\|f_k\big\|_{L^{p_k}(w_k)},
\end{equation*}
where $\nu_{\vec{w}}=\prod_{k=1}^m w_k^{p/{p_k}}$.
\end{theorem}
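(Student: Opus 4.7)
The plan is to follow the P\'erez--Torres strategy for iterated commutators, which reduces the $L^p(\nu_{\vec{w}})$ bound to a pointwise sharp maximal function estimate combined with weighted bounds for the multilinear $L(\log L)$-type maximal operator $\mathcal{M}_{L(\log L)}$. Concretely, for $0<\delta<\varepsilon$ with $m\delta<\min(1,p_1,\dots,p_m)$ and $\vec{b}\in\mathrm{BMO}^m$, I would first establish the pointwise estimate
\[
M^{\#}_\delta\!\big([\Pi\vec{b},T_\theta](\vec{f})\big)(x)\leq C\prod_{k=1}^{m}\|b_k\|_*\cdot\mathcal{M}_{L(\log L)}(\vec{f})(x)+C\!\!\sum_{\emptyset\neq\sigma\subsetneq\{1,\dots,m\}}\!\Big(\prod_{k\in\sigma^c}\|b_k\|_*\Big)\,M_\varepsilon\!\big([\Pi\vec{b}_\sigma,T_\theta](\vec{f})\big)(x),
\]
where $[\Pi\vec{b}_\sigma,T_\theta]$ is the iterated commutator with $\mathrm{BMO}$ entries restricted to indices in $\sigma$ and $M_\varepsilon h:=M(|h|^\varepsilon)^{1/\varepsilon}$. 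The derivation fixes a ball $B\ni x$, subtracts the averages $(b_k)_{B^*}$ inside each factor $b_k(x)-b_k(y_k)$ of representation \eqref{iteratedc}, expands the resulting product over subsets of $\{1,\dots,m\}$, and splits each $f_k$ into a piece supported in a concentric dilate $B^*$ and a global piece on $(B^*)^c$.

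With the sharp maximal estimate in hand, the conclusion follows by induction on the number of $\mathrm{BMO}$ entries. The weighted Fefferman--Stein inequality $\|g\|_{L^p(\nu_{\vec{w}})}\leq C\|M^{\#}_\delta g\|_{L^p(\nu_{\vec{w}})}$ applies since $\nu_{\vec{w}}\in A_\infty$ whenever $\vec{w}\in A_{\vec{P}}$; applied to $g=[\Pi\vec{b},T_\theta](\vec{f})$ it reduces matters to bounding the right-hand side. The leading term is handled by the known boundedness of $\mathcal{M}_{L(\log L)}$ from $L^{p_1}(w_1)\times\cdots\times L^{p_m}(w_m)$ into $L^p(\nu_{\vec{w}})$ under the $A_{\vec{P}}$ condition, and the lower-order terms involve strictly fewer $b_k$'s and are controlled by the inductive hypothesis combined with $\|M_\varepsilon h\|_{L^p(w)}\leq C\|h\|_{L^p(w)}$ for $w\in A_\infty$ and $\varepsilon<p$. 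The base case $\sigma=\emptyset$ is exactly Theorem \ref{strong}. A standard truncation argument ensures the left-hand side is a priori finite so that Fefferman--Stein is applicable.

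The main obstacle is proving the sharp maximal estimate while using only the Dini condition \eqref{theta1}, not the stronger \eqref{theta2}. A naive expansion of the global pieces yields, on each dyadic annulus $2^jB^*\setminus 2^{j-1}B^*$, the kernel smoothness factor $\theta(2^{-j})$ multiplied by averages of $\mathrm{BMO}$ corrections $(b_k)_{2^jB^*}-(b_k)_{B^*}$ whose standard bound grows like $j\|b_k\|_*$; summing in $j$ then requires $\int_0^1\theta(t)(1+|\log t|)^m/t\,dt<+\infty$, well beyond \eqref{theta1}. Following the Appendix's strategy for $[\Sigma\vec{b},T_\theta]$, the fix is to transfer the $\mathrm{BMO}$ moments from the kernel side onto the function side via a generalized H\"older inequality in Orlicz spaces of $L(\log L)$ type, at the cost of replacing the plain average of $\vec{f}$ over $2^jB^*$ by an $\mathcal{M}_{L(\log L)}$-type average. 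After this substitution only the bare series $\sum_j\theta(2^{-j})$ remains on the kernel side, and this is controlled by \eqref{theta1}. Implementing this exchange uniformly for all $2^m$ subsets that arise in the product expansion, while keeping track of the $M_\varepsilon$ terms attached to the proper subsets $\sigma$, is the technical heart of the argument.
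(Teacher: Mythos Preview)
Your proposal takes a genuinely different route from the paper, and it has a real gap at exactly the point you flag as the technical heart.

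The paper's Appendix does \emph{not} use a sharp maximal function argument, nor Orlicz--H\"older transfers. It uses the Cauchy integral / exponential conjugation method of Alvarez--Bagby--Kurtz--P\'erez and P\'erez--Torres: writing
\[
b_k(x)-b_k(y_k)=\frac{1}{2\pi}\int_0^{2\pi}e^{e^{i\varphi_k}[b_k(x)-b_k(y_k)]}\,e^{-i\varphi_k}\,d\varphi_k
\]
for each $k$ expresses $[\Pi\vec{b},T_\theta](\vec{f})(x)$ as an average over $[0,2\pi]^m$ of $T_\theta$ applied to the twisted inputs $e^{-e^{i\varphi_k}b_k}f_k$, multiplied by $\prod_k e^{e^{i\varphi_k}b_k(x)}$. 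After Minkowski this reduces to the boundedness of $T_\theta$ from $\prod_k L^{p_k}(w_k\,e^{p_k\cos\varphi_k b_k})$ into $L^p(\nu_{\vec{w}}\prod_k e^{p\cos\varphi_k b_k})$, which is obtained by Stein--Weiss interpolation between Theorem~\ref{strong} applied with the weights $(w_k^{1+\varepsilon})$ (available since $\vec{w}\in A_{\vec{P}}$ self-improves) and with the weights $(e^{\eta_k\cos\varphi_k b_k})\in\prod_k A_{p_k}$ (available by John--Nirenberg for $\|b_k\|_*$ small, the general case following by scaling). The crucial feature is that only Theorem~\ref{strong} is invoked, so only \eqref{theta1} is needed; kernel smoothness never interacts with a BMO oscillation.

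Your proposed fix does not remove the logarithmic loss. Applying the $L\log L$--$\exp L$ H\"older inequality on $2^{j}B^*$ yields
\[
\frac{1}{|2^{j}B^*|}\int_{2^{j}B^*}\big|b_k(y)-(b_k)_{B^*}\big|\,|f_k(y)|\,dy
\lesssim \big\|b_k-(b_k)_{B^*}\big\|_{\exp L,\,2^{j}B^*}\,\|f_k\|_{L\log L,\,2^{j}B^*},
\]
and by \eqref{Jensen} one has $\|b_k-(b_k)_{B^*}\|_{\exp L,\,2^{j}B^*}\lesssim (j+1)\|b_k\|_*$: the Orlicz pairing absorbs the oscillation \emph{at scale} $2^{j}B^*$, but the mismatch between the fixed reference constant $(b_k)_{B^*}$ and the annulus scale still contributes $(j+1)$. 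With $m$ BMO factors the global piece therefore sums to $\sum_j\theta(2^{-j})(j+1)^m$, i.e.\ precisely condition \eqref{theta3}, not \eqref{theta1}. Thus the sharp maximal route as you outline it cannot reach the conclusion under \eqref{theta1} alone; the conjugation argument is what the paper uses to circumvent this obstruction.
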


\begin{theorem}\label{Wcommwh}
Let $m\in\mathbb N$ and $\big[\Pi\vec{b},T_\theta\big]$ be the iterated commutator generated by $\theta$-type Calder\'on--Zygmund operator $T_{\theta}$ and $\vec{b}=(b_1,\ldots,b_m)\in\mathrm{BMO}^m;$ let $\theta$ satisfy
\begin{equation}\label{theta3}
\int_0^1\frac{\theta(t)\cdot(1+|\log t|^m)}{t}\,dt<+\infty.
\end{equation}
If $p_k=1$, $k=1,2,\dots,m$ and $\vec{w}=(w_1,\ldots,w_m)\in A_{(1,\dots,1)}$, then for any given $\lambda>0$, there exists a constant $C>0$ independent of $\vec{f}=(f_1,\ldots,f_m)$ and $\lambda$ such that
\begin{equation*}
\begin{split}
&\nu_{\vec{w}}\Big(\Big\{x\in\mathbb R^n:\big|\big[\Pi\vec{b},T_\theta\big](\vec{f})(x)\big|>\lambda^m\Big\}\Big)\\
&\leq C\cdot
\prod_{k=1}^m\bigg(\int_{\mathbb R^n}\Phi^{(m)}\bigg(\frac{|f_k(x)|}{\lambda}\bigg)w_k(x)\,dx\bigg)^{1/m},
\end{split}
\end{equation*}
where $\nu_{\vec{w}}=\prod_{k=1}^m w_k^{1/{m}}$, $\Phi(t)=t\cdot(1+\log^+t)$ and $\Phi^{(m)}:=\overbrace{\Phi\circ\cdots\circ\Phi}^m$.
\end{theorem}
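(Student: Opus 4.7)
The plan is to adapt the endpoint strategy of P\'erez and Torres \cite{perez} to the $\theta$-type kernel setting with multiple weights. I would begin with a simultaneous Calder\'on--Zygmund decomposition: for each $k=1,\dots,m$, at height $\lambda$, decompose $f_k = g_k + h_k$ where $|g_k|\leq C\lambda$ and $h_k = \sum_j h_{k,j}$, with each $h_{k,j}$ supported on a cube $Q_{k,j}$ from a pairwise disjoint family satisfying $\int h_{k,j}\,dy = 0$ and $|Q_{k,j}|^{-1}\int_{Q_{k,j}}|f_k|\leq C\lambda$. Setting $\Omega^*:=\bigcup_{k,j}2Q_{k,j}$, the multiple-weight $A_{(1,\dots,1)}$ condition combined with the CZ construction already yields $\nu_{\vec w}(\Omega^*)\leq C\prod_{k=1}^m\bigl(\int|f_k|/\lambda\cdot w_k\bigr)^{1/m}$, which is dominated by the target right-hand side (since $\Phi^{(m)}(t)\geq t$), so only the distribution on $(\Omega^*)^c$ must be analysed.

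By multilinearity, $\vec f=(g_1+h_1,\dots,g_m+h_m)$ splits into $2^m$ products. The purely good term $(g_1,\dots,g_m)$ is handled via Chebyshev and Theorem~\ref{commwh} at an exponent slightly above the endpoint, together with the pointwise bound $|g_k|\leq C\lambda$. For every term carrying at least one bad component, the essential device is the algebraic identity
\begin{equation*}
\prod_{k=1}^m\bigl[b_k(x)-b_k(y_k)\bigr] = \sum_{S\subseteq\{1,\dots,m\}}(-1)^{m-|S|}\prod_{k\in S}\bigl(b_k(x)-\beta_k\bigr)\prod_{k\notin S}\bigl(b_k(y_k)-\beta_k\bigr),
\end{equation*}
where each $\beta_k$ is chosen as an average of $b_k$ over an appropriate cube. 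Factors $(b_k(x)-\beta_k)$ that stay outside the kernel integral are absorbed using Orlicz maximal operators $M_{\Phi^{(m)}}$ together with the $A_1$-property of each $w_k$; factors $(b_k(y_k)-\beta_k)$ kept inside produce, after John--Nirenberg, an additional logarithmic gain per dyadic shell for each slot where they appear.

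The principal analytic step is to estimate the kernel integrals over $(\Omega^*)^c$. Using the cancellation $\int h_{k,j}=0$, I would replace $K(x,\dots,y_k,\dots)$ by its value at the centre $y_{Q_{k,j}}$ and invoke the $\theta$-regularity of the kernel in the $k$-th slot (Definition 1.3(3)) to gain a factor $\theta(2^{-l})$ on the annulus $2^lQ_{k,j}\setminus 2^{l-1}Q_{k,j}$. Each of the $m$ BMO factors left inside contributes an independent polynomial factor $(1+l)$ on that shell, so the reduction is to the convergence of $\sum_{l\geq 1}\theta(2^{-l})(1+l)^m$, which by comparison with
\begin{equation*}
\int_0^1\frac{\theta(t)(1+|\log t|^m)}{t}\,dt
\end{equation*}
is guaranteed by hypothesis \eqref{theta3}. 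Combining these bounds, dominating the resulting multilinear averages by $M_{\Phi^{(m)}}$ applied to $f_k/\lambda$, and then invoking the appropriate $A_1$-endpoint estimate for $M_{\Phi^{(m)}}$ yields the claimed inequality. The main obstacle is the combinatorial bookkeeping: $2^m$ CZ pieces interact with $2^m$ commutator expansion pieces, and one must verify that every contribution produces \emph{exactly} $m$ log-gains---no more, no fewer---so that the right-hand side involves precisely $\Phi^{(m)}$ and not a coarser Young function.
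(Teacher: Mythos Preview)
Your proposal is correct and follows essentially the approach the paper points to. The paper does not give a detailed proof of this theorem; it simply states that the result follows ``the arguments used in \cite{perez} and \cite{lu} with some minor modifications'' for the $\theta$-type kernel. Your Calder\'on--Zygmund decomposition, the $2^m$-fold splitting, the commutator expansion in terms of $(b_k(x)-\beta_k)$ and $(b_k(y_k)-\beta_k)$, and the crucial identification of the Dini-type series $\sum_{l\geq 1}\theta(2^{-l})(1+l)^m$ with condition~\eqref{theta3} are exactly those ``minor modifications.'' One small point: the paper's own detailed argument in the Appendix (via the Cauchy integral trick) addresses only the \emph{strong-type} Theorem~\ref{commwh}, not this endpoint result, so there is no alternative route in the paper to compare against.
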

\begin{rek}
It was proved in \cite{perez} that when $\theta(t)=t^{\varepsilon}$ for some $\varepsilon>0$, the estimate in Theorem \ref{Wcommwh} is sharp in the sense that $\Phi^{(m)}$ cannot be replaced by $\Phi^{(k)}$ for any $k<m$.
\end{rek}
On the other hand, the classical Morrey spaces $L^{p,\kappa}(\mathbb R^n)$ were originally introduced by Morrey in \cite{morrey} to study the local regularity of solutions to second order elliptic partial differential equations. Nowadays these spaces have been studied intensively in the literature, and found a wide range of applications in harmonic analysis, potential theory and nonlinear dispersive equations. In 2009, Komori and Shirai \cite{komori} defined and investigated the weighted Morrey spaces $L^{p,\kappa}(w)$ for $1\leq p<+\infty$, which could be viewed as an extension of weighted Lebesgue spaces, and obtained the boundedness of some classical integral operators on these weighted spaces. In order to deal with the multilinear case $m\geq2$, we consider the weighted Morrey spaces $L^{p,\kappa}(w)$ here for all $0<p<+\infty$. We will extend the results obtained in \cite{lu} for $m$-linear $\theta$-type Calder\'on--Zygmund operators to the product of weighted Morrey spaces with multiple weights. Moreover, the corresponding weighted estimates for both multilinear commutators and iterated commutators are also considered. Let us first recall the definition of the spaces $L^{p,\kappa}(w)$ and $WL^{p,\kappa}(w)$.
\begin{defn}[\cite{komori}]\label{amalgam}
Let $0<p<+\infty$, $0\leq\kappa<1$ and let $w$ be a weight on $\mathbb R^n$. The weighted Morrey space $L^{p,\kappa}(w)$ is defined to be the set of all locally integrable functions $f$ on $\mathbb R^n$ satisfying
\begin{equation*}
\|f\|_{L^{p,\kappa}(w)}:=\sup_{B}\bigg(\frac{1}{w(B)^{\kappa}}\int_B|f(x)|^pw(x)\,dx\bigg)^{1/p}<+\infty,
\end{equation*}
where the supremum is taken over all balls $B$ in $\mathbb R^n$.
\end{defn}

\begin{defn}[\cite{komori}]\label{Wamalgam}
Let $0<p<+\infty$, $0\leq\kappa<1$ and let $w$ be a weight on $\mathbb R^n$. The weighted weak Morrey space $WL^{p,\kappa}(w)$ is defined to be the set of all measurable functions $f$ on $\mathbb R^n$ satisfying
\begin{equation*}
\begin{split}
\|f\|_{WL^{p,\kappa}(w)}:=&\sup_{B}\frac{1}{m(B)^{\kappa/p}}\sup_{\lambda>0}\lambda\cdot w\big(\big\{x\in B:|f(x)|>\lambda\big\}\big)^{1/p}<+\infty,
\end{split}
\end{equation*}
where the supremum is taken over all balls $B$ in $\mathbb R^n$ and all $\lambda>0$.
\end{defn}
Note that when $w\in\Delta_2$, then $L^{p,0}(w)=L^p(w)$, $WL^{p,0}(w)=WL^p(w)$ and $L^{p,1}(w)=L^{\infty}(w)$ by the Lebesgue differentiation
theorem with respect to $w$.

In order to deal with the end-point case of the commutators, we have to consider the following $L\log L$-type space, which was introduced by the second author in \cite{wang3,wang4} (for the unweighted case, see also \cite{lida} and \cite{sa}).
\begin{defn}
Let $p=1$, $0\leq\kappa<1$ and let $w$ be a weight on $\mathbb R^n$. We denote by $(L\log L)^{1,\kappa}(w)$ the weighted Morrey space of $L\log L$ type, the space of all locally integrable functions $f$ defined on $\mathbb R^n$ with finite norm
$\big\|f\big\|_{(L\log L)^{1,\kappa}(w)}$.
\begin{equation*}
(L\log L)^{1,\kappa}(w):=\Big\{f:\big\|f\big\|_{(L\log L)^{1,\kappa}(w)}<\infty\Big\},
\end{equation*}
where
\begin{equation*}
\begin{split}
\big\|f\big\|_{(L\log L)^{1,\kappa}(w)}:=&\sup_{B}w(B)^{1-\kappa}\big\|f\big\|_{L\log L(w),B}.
\end{split}
\end{equation*}
\end{defn}
Here $\|\cdot\|_{L\log L(w),B}$ denotes the weighted Luxemburg norm, whose definition will be given in Section \ref{sec3} below. Note that $t\leq t\cdot(1+\log^+t)$ for any $t>0$. By definition, for any ball $B$ in $\mathbb R^n$ and $w\in A_\infty$, then we have
\begin{equation*}
\big\|f\big\|_{L(w),B}\leq\big\|f\big\|_{L\log L(w),B},
\end{equation*}
which means that the following inequality (it can be viewed as a generalized Jensen's inequality)
\begin{equation}\label{main esti1}
\big\|f\big\|_{L(w),B}=\frac{1}{w(B)}\int_{B}|f(x)|w(x)\,dx\leq\big\|f\big\|_{L\log L(w),B}
\end{equation}
holds for any ball $B\subset\mathbb R^n$. Hence, for all $0<\kappa<1$ and $w\in A_\infty$, we can further obtain the following inclusion
from \eqref{main esti1}:
\begin{equation*}
(L\log L)^{1,\kappa}(w)\hookrightarrow L^{1,\kappa}(w).
\end{equation*}
It is known that $L^{p,\kappa}$ is an extension of $L^p$ in the sense that $L^{p,0}=L^p$.
Motivated by the works in \cite{komori,lu,ma}, the main purpose of this paper is to establish boundedness properties of multilinear $\theta$-type Calder\'on--Zygmund operators and their commutators on products of weighted Morrey spaces with multiple weights.

In what follows, the letter $C$ always stands for a positive constant independent of the main parameters and not necessarily the same at each occurrence. The symbol $\mathbf{X}\lesssim \mathbf{Y}$ means that there is a constant $C>0$ such that $\mathbf{X}\leq C\mathbf{Y}$.
The symbol $\mathbf{X}\approx\mathbf{Y}$ means that there is a constant $C>0$ such that $C^{-1}\mathbf{Y}\leq\mathbf{X}\leq C\mathbf{Y}$.

\section{Main results}
Our first two results on the boundedness properties of multilinear $\theta$-type Calder\'on--Zygmund operators can be formulated as follows.

\begin{theorem}\label{mainthm:1}
Let $m\geq 2$ and $T_{\theta}$ be an $m$-linear $\theta$-type Calder\'on--Zygmund operator with $\theta$ satisfying the condition \eqref{theta1}. If $1<p_1,\dots,p_m<+\infty$ and $1/m<p<+\infty$ with $1/p=\sum_{i=1}^m 1/{p_i}$, and $\vec{w}=(w_1,\ldots,w_m)\in A_{\vec{P}}$ with $w_1,\ldots,w_m\in A_\infty$, then for any $0<\kappa<1$, the multilinear operator $T_{\theta}$ is bounded from $L^{p_1,\kappa}(w_1)\times L^{p_2,\kappa}(w_2)\times\cdots
\times L^{p_m,\kappa}(w_m)$ into $L^{p,\kappa}(\nu_{\vec{w}})$ with $\nu_{\vec{w}}=\prod_{i=1}^m w_i^{p/{p_i}}$.
\end{theorem}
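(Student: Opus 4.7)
The plan is to fix an arbitrary ball $B=B(x_0,r_B)$ and carry out the standard Morrey-space decomposition $f_i=f_i^0+f_i^\infty$ with $f_i^0:=f_i\chi_{2B}$ and $f_i^\infty:=f_i\chi_{(2B)^c}$. Expanding by multilinearity produces $2^m$ pieces $T_\theta(f_1^{\alpha_1},\ldots,f_m^{\alpha_m})$ with $\vec{\alpha}\in\{0,\infty\}^m$. The ``all-local'' term ($\vec{\alpha}=\vec{0}$) will be dispatched directly by the weighted $L^p$ estimate of Theorem~\ref{strong}, while each of the remaining $2^m-1$ ``off-diagonal'' terms will be controlled by the kernel size bound \eqref{size} after decomposing the far region $(2B)^c$ into dyadic annuli $2^{k+1}B\setminus 2^kB$, $k\geq 1$.

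For the local term, Theorem~\ref{strong} together with the defining inequality of $L^{p_i,\kappa}(w_i)$ gives
\begin{equation*}
\bigl\|T_\theta(f_1^0,\ldots,f_m^0)\bigr\|_{L^p(\nu_{\vec w})}\lesssim\prod_{i=1}^m\bigl\|f_i\chi_{2B}\bigr\|_{L^{p_i}(w_i)}\leq\prod_{i=1}^m w_i(2B)^{\kappa/p_i}\bigl\|f_i\bigr\|_{L^{p_i,\kappa}(w_i)},
\end{equation*}
so what remains for this piece is a weight-comparison estimate of the form $\prod_i w_i(Q)^{p/p_i}\lesssim\nu_{\vec w}(Q)$ for every ball $Q$; combined with the doubling of $\nu_{\vec w}\in A_\infty$ this converts $\prod_i w_i(2B)^{\kappa/p_i}$ into $\nu_{\vec w}(B)^{\kappa/p}$, which is the correct normalization for the $L^{p,\kappa}(\nu_{\vec w})$ norm.

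For an off-diagonal term with $J:=\{i:\alpha_i=\infty\}\neq\emptyset$ and $x\in B$, the kernel size condition \eqref{size} (applied after localizing the farthest far-variable to $2^{k+1}B\setminus 2^kB$ and noting that every other $y_i$ then lies in a constant dilate of $2^{k+1}B$) furnishes the uniform pointwise bound
\begin{equation*}
\bigl|T_\theta(f_1^{\alpha_1},\ldots,f_m^{\alpha_m})(x)\bigr|\lesssim\sum_{k=1}^\infty\frac{1}{|2^{k+1}B|^m}\prod_{i=1}^m\int_{2^{k+1}B}|f_i(y_i)|\,dy_i.
\end{equation*}
Applying H\"older's inequality through $|f_i|=(|f_i|^{p_i}w_i)^{1/p_i}\cdot w_i^{-1/p_i}$ and using the Morrey norm of $f_i$ controls each inner integral by $w_i(2^{k+1}B)^{\kappa/p_i}\sigma_i(2^{k+1}B)^{1/p_i'}\|f_i\|_{L^{p_i,\kappa}(w_i)}$ with $\sigma_i:=w_i^{1-p_i'}$; the $A_{\vec P}$ condition then supplies $\prod_i\sigma_i(2^{k+1}B)^{1/p_i'}\lesssim|2^{k+1}B|^m\nu_{\vec w}(2^{k+1}B)^{-1/p}$, and the weight-comparison lemma above absorbs $\prod_i w_i(2^{k+1}B)^{\kappa/p_i}$ into $\nu_{\vec w}(2^{k+1}B)^{\kappa/p}$. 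After all $|2^{k+1}B|$-factors cancel one is left with
\begin{equation*}
\bigl|T_\theta(f_1^{\alpha_1},\ldots,f_m^{\alpha_m})(x)\bigr|\lesssim\Bigl(\sum_{k=1}^\infty\nu_{\vec w}(2^{k+1}B)^{(\kappa-1)/p}\Bigr)\prod_{i=1}^m\|f_i\|_{L^{p_i,\kappa}(w_i)}.
\end{equation*}
Since $\nu_{\vec w}\in A_\infty$ enjoys the reverse-doubling estimate $\nu_{\vec w}(2^{k+1}B)\gtrsim 2^{kn\delta}\nu_{\vec w}(B)$ for some $\delta>0$, and $\kappa<1$ makes the exponent negative, the series sums geometrically to $C\,\nu_{\vec w}(B)^{(\kappa-1)/p}$; integrating the $p$-th power over $B$ against $\nu_{\vec w}$ and normalizing by $\nu_{\vec w}(B)^{\kappa/p}$ yields the desired bound uniformly in $B$.

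The main obstacle is the weight-comparison inequality $\prod_i w_i(Q)^{p/p_i}\lesssim\nu_{\vec w}(Q)$. The opposite direction $\nu_{\vec w}(Q)\leq\prod_i w_i(Q)^{p/p_i}$ comes for free from H\"older's inequality (since $\sum_i p/p_i=1$), but the direction needed here is a genuine ``reverse H\"older'' statement and is precisely the place where the extra hypothesis $w_1,\ldots,w_m\in A_\infty$ must be used, via a reverse-H\"older estimate applied individually to each $w_i$ and then recombined through H\"older once more. Once this auxiliary lemma is in hand, the remainder of the proof is a routine assembly of Theorem~\ref{strong}, the size bound \eqref{size}, weighted H\"older's inequality, the $A_{\vec P}$ condition, and the doubling of $\nu_{\vec w}$.
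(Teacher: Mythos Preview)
Your proposal is correct and follows essentially the same route as the paper: fix a ball, split each $f_i$ into local and far parts on $2B$, handle the all-local piece by Theorem~\ref{strong} plus the weight-comparison inequality $\prod_i w_i(Q)^{p/p_i}\lesssim\nu_{\vec w}(Q)$ and doubling, and control every off-diagonal piece by the size bound \eqref{size}, a dyadic-annulus decomposition, H\"older, the $A_{\vec P}$ condition, the same weight-comparison inequality, and the $A_\infty$ decay of $\nu_{\vec w}(B)/\nu_{\vec w}(2^{k+1}B)$.

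The one place your write-up diverges slightly from the paper is in the proof of the auxiliary weight-comparison inequality. You propose to obtain it ``via a reverse-H\"older estimate applied individually to each $w_i$ and then recombined through H\"older,'' but it is not clear how to make that work directly: reverse H\"older bounds higher averages by lower ones, whereas here you need the product of the $L^1$-averages of the $w_i$ to dominate the $L^1$-average of the product $\nu_{\vec w}$. The paper instead uses the \emph{reverse Jensen} characterization of $A_\infty$ (Lemma~\ref{Ainfty}), namely $\int_{\mathcal B}w\le C|\mathcal B|\exp\bigl(|\mathcal B|^{-1}\int_{\mathcal B}\log w\bigr)$, applied to each $w_i$; multiplying these with exponents $p/p_i$ collapses the right-hand side to $C|\mathcal B|\exp\bigl(|\mathcal B|^{-1}\int_{\mathcal B}\log\nu_{\vec w}\bigr)$, and ordinary Jensen then gives $\le C\,\nu_{\vec w}(\mathcal B)$. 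This is a clean two-line argument you should substitute for the reverse-H\"older sketch; everything else in your outline matches the paper's proof.
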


\begin{theorem}\label{mainthm:2}
Let $m\geq 2$ and $T_{\theta}$ be an $m$-linear $\theta$-type Calder\'on--Zygmund operator with $\theta$ satisfying the condition \eqref{theta1}. If $1\leq p_1,\dots,p_m<+\infty$, $\min\{p_1,\ldots,p_m\}=1$ and $1/m\leq p<+\infty$ with $1/p=\sum_{i=1}^m 1/{p_i}$, and $\vec{w}=(w_1,\ldots,w_m)\in A_{\vec{P}}$ with $w_1,\ldots,w_m\in A_\infty$, then for any $0<\kappa<1$, the multilinear operator $T_{\theta}$ is bounded from $L^{p_1,\kappa}(w_1)\times L^{p_2,\kappa}(w_2)\times\cdots
\times L^{p_m,\kappa}(w_m)$ into $WL^{p,\kappa}(\nu_{\vec{w}})$ with $\nu_{\vec{w}}=\prod_{i=1}^m w_i^{p/{p_i}}$.
\end{theorem}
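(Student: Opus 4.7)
The proof adapts the weighted weak-type Lebesgue estimate of Theorem \ref{weak} to the Morrey setting via a standard localization procedure. Fix an arbitrary ball $B = B(x_0, r_B) \subset \mathbb R^n$ and decompose each function as $f_k = f_k^0 + f_k^\infty$, where $f_k^0 := f_k \chi_{2B}$ and $f_k^\infty := f_k \chi_{\mathbb R^n \setminus 2B}$. By the $m$-linearity of $T_\theta$,
\begin{equation*}
T_\theta(\vec f)(x) = \sum_{\vec\alpha \in \{0,\infty\}^m} T_\theta\bigl(f_1^{\alpha_1},\dots,f_m^{\alpha_m}\bigr)(x),
\end{equation*}
a sum of $2^m$ terms. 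Hence it suffices to verify that for each multi-index $\vec\alpha$ and every $\lambda > 0$,
\begin{equation*}
\lambda \cdot \nu_{\vec w}\bigl(\{x \in B : |T_\theta(f^{\vec\alpha})(x)| > \lambda\}\bigr)^{1/p} \lesssim \nu_{\vec w}(B)^{\kappa/p}\prod_{k=1}^m \|f_k\|_{L^{p_k,\kappa}(w_k)},
\end{equation*}
uniformly in $\lambda$ and $B$; taking the suprema then yields the claim.

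For the principal piece $\vec\alpha = (0,\dots,0)$, I apply Theorem \ref{weak} directly on $(f_1^0,\dots,f_m^0)$ to obtain
\begin{equation*}
\lambda \cdot \nu_{\vec w}\bigl(\{x \in B : |T_\theta(\vec f^{\,0})(x)| > \lambda\}\bigr)^{1/p} \lesssim \prod_{k=1}^m \|f_k^0\|_{L^{p_k}(w_k)} \leq \prod_{k=1}^m w_k(2B)^{\kappa/p_k}\|f_k\|_{L^{p_k,\kappa}(w_k)}.
\end{equation*}
Since each $w_k \in A_\infty$ is doubling, $w_k(2B)$ is replaced by $w_k(B)$. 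The multilinear weight comparison
\begin{equation*}
\prod_{k=1}^m w_k(B)^{p/p_k} \approx \nu_{\vec w}(B),
\end{equation*}
whose upper bound is the generalized H\"older inequality (with exponents $p_k/p$) and whose lower bound is supplied by the $A_{\vec P}$ condition together with the reverse H\"older inequality for each $w_k \in A_\infty$, then delivers the required factor $\nu_{\vec w}(B)^{\kappa/p}$.

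For an error term with $\vec\alpha \neq (0,\dots,0)$, I derive a uniform pointwise bound on $B$. For each index $k$ with $\alpha_k = \infty$, we have $y_k \in (2B)^c$, so $|x - y_k| \geq r_B$ for every $x \in B$. Combining the size condition \eqref{size} with the elementary AM--GM estimate $(|x-y_1|+\cdots+|x-y_m|)^{mn} \geq C\prod_k |x-y_k|^n$, and decomposing each "$\infty$-slot" via dyadic annuli $(2B)^c = \bigcup_{j\geq 1}(2^{j+1}B \setminus 2^j B)$ centered at $x_0$, yields a bound of the form
\begin{equation*}
|T_\theta(f^{\vec\alpha})(x)| \lesssim \prod_{k\,:\,\alpha_k=\infty}\sum_{j_k\geq 1}\frac{1}{|2^{j_k+1}B|}\int_{2^{j_k+1}B}|f_k(y_k)|\,dy_k \cdot \prod_{k\,:\,\alpha_k=0}\frac{1}{|2B|}\int_{2B}|f_k(y_k)|\,dy_k,
\end{equation*}
uniformly in $x \in B$ (modulo geometric decay factors in the $\alpha_k=0$ averages that will be absorbed). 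Each averaged integral is then estimated by H\"older's inequality (using the weight $w_k$ together with its dual $w_k^{1-p_k'}$) and the Morrey norm bound $(\int_E |f_k|^{p_k}w_k)^{1/p_k} \leq w_k(E)^{\kappa/p_k}\|f_k\|_{L^{p_k,\kappa}(w_k)}$; the $A_{\vec P}$ condition controls the dual integral $\int w_k^{1-p_k'}$, and the reverse H\"older inequality for $w_k \in A_\infty$ gives $w_k(2^{j+1}B) \gtrsim 2^{jn\eta_k}w_k(B)$ with $\eta_k > 0$, producing a geometrically convergent series in each $j_k$ because $0 < \kappa < 1$. Combined with the weight comparison above, the uniform bound takes the form $|T_\theta(f^{\vec\alpha})(x)| \lesssim \nu_{\vec w}(B)^{(\kappa-1)/p}\prod_k\|f_k\|_{L^{p_k,\kappa}(w_k)}$; Chebyshev's inequality then delivers
\begin{equation*}
\lambda\,\nu_{\vec w}\bigl(\{x \in B : |T_\theta(f^{\vec\alpha})| > \lambda\}\bigr)^{1/p} \lesssim \nu_{\vec w}(B)^{(\kappa-1)/p}\,\nu_{\vec w}(B)^{1/p}\prod_k\|f_k\|_{L^{p_k,\kappa}(w_k)} = \nu_{\vec w}(B)^{\kappa/p}\prod_k\|f_k\|_{L^{p_k,\kappa}(w_k)}.
\end{equation*}

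The principal technical obstacle is the multilinear weight comparison $\prod_k w_k(B)^{p/p_k} \approx \nu_{\vec w}(B)$ together with the book-keeping needed to sum the geometric series from the annular decomposition of the error terms. Both rely crucially on the individual $A_\infty$ assumption on each $w_k$: without it, neither the lower direction of H\"older's inequality (not implied by $A_{\vec P}$ alone) nor the polynomial growth control $w_k(2^{j+1}B) \gtrsim 2^{jn\eta_k}w_k(B)$ would be available, so neither the principal term nor the annular sums could be closed.
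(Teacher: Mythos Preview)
Your overall architecture matches the paper's: fix a ball $B$, split each $f_k$ into $f_k^0=f_k\chi_{2B}$ and $f_k^\infty$, handle the principal piece with Theorem~\ref{weak}, and for the remaining pieces derive a pointwise bound on $B$ and convert it via Chebyshev. Your treatment of the principal term is essentially the paper's (the paper proves the needed direction $\prod_k w_k(\mathcal B)^{p/p_k}\lesssim\nu_{\vec w}(\mathcal B)$ via the reverse Jensen inequality for $A_\infty$ weights, not via reverse H\"older, but the claim is correct).

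There is, however, a genuine gap in your error-term estimate. The AM--GM bound $(\sum_k|x-y_k|)^{mn}\geq C\prod_k|x-y_k|^n$ is useless for a mixed term with some $\alpha_k=0$: for those $k$ one has $y_k\in 2B$ and $x\in B$, so $|x-y_k|$ can vanish, and the separated kernel $\prod_k|x-y_k|^{-n}$ is not locally integrable in the $y_k$ with $\alpha_k=0$. In particular the factor $\frac{1}{|2B|}\int_{2B}|f_k|$ you write for those slots does not follow from AM--GM. The paper avoids this by never separating variables: for a term with $\ell$ of the $\alpha_k$ equal to $\infty$ it uses the single lower bound $\sum_k|x-y_k|\gtrsim 2^j r$ coming from the $\infty$-slots and decomposes \emph{jointly} over $(2^{j+1}B)^\ell\setminus(2^jB)^\ell$, obtaining the common pointwise bound
\[
\big|T_\theta(f^{\beta_1},\dots,f^{\beta_m})(x)\big|\lesssim\sum_{j\ge1}\prod_{i=1}^m\frac{1}{|2^{j+1}B|}\int_{2^{j+1}B}|f_i|.
\]

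A second, related problem is your independent annular decomposition (one index $j_k$ per $\infty$-slot). Even if you repair the kernel bound, applying H\"older factor by factor requires individual control of $\bigl(\int_{2^{j_k+1}B}w_k^{1-p_k'}\bigr)^{1/p_k'}$, which the $A_{\vec P}$ condition does not provide: $A_{\vec P}$ only bounds the \emph{product} of these quantities over a \emph{common} ball against $\nu_{\vec w}$ on that same ball. With different balls $2^{j_k+1}B$ for different $k$, the argument does not close. The paper's single-$j$ decomposition places all factors on the same ball $2^{j+1}B$, so the $A_{\vec P}$ condition and the comparison $\prod_k w_k(2^{j+1}B)^{\kappa/p_k}\lesssim\nu_{\vec w}(2^{j+1}B)^{\kappa/p}$ apply directly, and the geometric decay then comes from $\nu_{\vec w}\in A_{mp}\subset A_\infty$ via $\nu_{\vec w}(B)/\nu_{\vec w}(2^{j+1}B)\lesssim 2^{-jn\delta}$, rather than from growth of each individual $w_k$.
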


Our next theorem concerns norm inequalities for the multilinear commutator $\big[\Sigma\vec{b},T_\theta\big]$ with $\vec{b}\in\mathrm{BMO}^m$.

\begin{theorem}\label{mainthm:3}
Let $m\geq2$ and $\big[\Sigma\vec{b},T_\theta\big]$ be the $m$-linear commutator of $\theta$-type Calder\'on--Zygmund operator $T_{\theta}$ with $\theta$ satisfying the condition \eqref{theta1} and $\vec{b}\in \mathrm{BMO}^m$. If $1<p_1,\dots,p_m<+\infty$ and $1/m<p<+\infty$ with $1/p=\sum_{i=1}^m 1/{p_i}$, and $\vec{w}=(w_1,\ldots,w_m)\in A_{\vec{P}}$ with $w_1,\ldots,w_m\in A_\infty$, then for any $0<\kappa<1$, the multilinear commutator $\big[\Sigma\vec{b},T_\theta\big]$ is bounded from $L^{p_1,\kappa}(w_1)\times L^{p_2,\kappa}(w_2)\times\cdots
\times L^{p_m,\kappa}(w_m)$ into $L^{p,\kappa}(\nu_{\vec{w}})$ with $\nu_{\vec{w}}=\prod_{i=1}^m w_i^{p/{p_i}}$.
\end{theorem}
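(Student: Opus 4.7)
The plan is to fix an arbitrary ball $B = B(x_0, r_B) \subset \mathbb R^n$ and establish the pointwise-in-$B$ bound
\begin{equation*}
\bigg(\frac{1}{\nu_{\vec{w}}(B)^{\kappa}}\int_B \big|[\Sigma\vec{b}, T_\theta](\vec{f})(x)\big|^p\nu_{\vec{w}}(x)\,dx\bigg)^{1/p} \leq C\|\vec{b}\|_{\mathrm{BMO}^m}\prod_{k=1}^m\|f_k\|_{L^{p_k,\kappa}(w_k)},
\end{equation*}
after which taking the supremum over $B$ yields the Morrey estimate. Because $[\Sigma\vec{b}, T_\theta] = \sum_{k=1}^m [b_k,T_\theta]_k$, by symmetry it suffices to treat one term, say $[b_1,T_\theta]_1$. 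For each $i = 1,\dots,m$, split $f_i = f_i^0 + f_i^\infty$ with $f_i^0 := f_i\chi_{2B}$; multilinearity produces $2^m$ pieces indexed by $\alpha = (\alpha_1,\ldots,\alpha_m) \in \{0,\infty\}^m$, which I group into the fully local piece ($\alpha = \vec{0}$) and the $2^m - 1$ non-local pieces (at least one $\alpha_i = \infty$).

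For the fully local piece I invoke the strong $L^{p_1}(w_1)\times\cdots\times L^{p_m}(w_m) \to L^p(\nu_{\vec{w}})$ bound for $[b_1,T_\theta]_1$. Since the present hypothesis is only \eqref{theta1}, I use the strengthened form of Theorem \ref{comm} announced in the Appendix, which removes the assumption \eqref{theta2}. Combining this with the basic estimate $\|f_i^0\|_{L^{p_i}(w_i)} \leq w_i(2B)^{\kappa/p_i}\|f_i\|_{L^{p_i,\kappa}(w_i)}$, the multilinear $A_{\vec{P}}$ inequality $\prod_{i=1}^m w_i(2B)^{p/p_i} \lesssim \nu_{\vec{w}}(2B)$, and doubling of $\nu_{\vec{w}}$, the contribution of this piece to the Morrey quotient is controlled as required.

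For each non-local piece, dyadically decompose $\mathbb R^n \setminus 2B = \bigsqcup_{j\geq 1}(2^{j+1}B \setminus 2^j B)$. The kernel size estimate \eqref{size}, together with $\sum_i|x-y_i| \gtrsim 2^j r_B$ whenever at least one $y_i$ lies in the $j$-th shell, yields the pointwise control
\begin{equation*}
\big|T_\theta(f_1^{\alpha_1},\ldots,f_m^{\alpha_m})(x)\big| \leq C\sum_{j\geq 1}\frac{1}{|2^j B|^m}\prod_{i=1}^m\int_{2^{j+1}B}|f_i(y_i)|\,dy_i
\end{equation*}
for $x \in B$ (with those variables satisfying $\alpha_i = 0$ restricted to $2B$). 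To expose the commutator structure, I write $b_1(x) - b_1(y_1) = [b_1(x) - (b_1)_{2B}] - [b_1(y_1) - (b_1)_{2B}]$, splitting each non-local piece into two sub-terms: one carries the outer factor $|b_1(x) - (b_1)_{2B}|$, absorbed after integration in $x$ by a generalized H\"older inequality with John--Nirenberg; the other carries $|b_1(y_1) - (b_1)_{2B}|$ inside the dyadic integral, where the telescoping estimate $|(b_1)_{2^{j+1}B} - (b_1)_{2B}| \lesssim j\|b_1\|_*$ together with an $L\log L$-Orlicz H\"older inequality on each shell (in the spirit of Theorem \ref{comm}) absorbs the oscillation.

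After applying H\"older and the multilinear $A_{\vec{P}}$ condition, each dyadic contribution is bounded by a geometric ratio $\nu_{\vec{w}}(B)/\nu_{\vec{w}}(2^j B) \lesssim \gamma^j$ with some $\gamma \in (0,1)$; this reverse-doubling is guaranteed by $0 < \kappa < 1$ together with $\vec{w} \in A_{\vec{P}}$ and each $w_i \in A_\infty$. The polynomial factor $j$ from BMO telescoping is absorbed by the geometric decay, so the sum in $j$ converges. The main obstacle I anticipate is the bookkeeping for non-local pieces when several $\alpha_i = \infty$ simultaneously: the arguments $y_i$ with $\alpha_i = \infty$ may a priori live in different dyadic shells, and the correct matching is achieved by reducing to the worst (largest) shell through $\sum_i|x-y_i| \gtrsim \max_i|x-y_i|$. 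Note that the $\theta$-regularity \eqref{theta1} enters only indirectly, via the Appendix version of Theorem \ref{comm}; for the non-local tails the mere kernel size \eqref{size} suffices.
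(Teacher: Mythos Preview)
Your proposal is correct and follows essentially the same architecture as the paper's proof: reduction to a single symbol, the $2^m$-fold splitting via $f_i = f_i^0 + f_i^\infty$, Theorem~\ref{comm} (in its Appendix-strengthened form under \eqref{theta1}) for the fully local piece, and for each non-local piece the further splitting $b_1(x)-b_1(y_1)=[b_1(x)-c]-[b_1(y_1)-c]$ combined with the dyadic-shell kernel size bound, then summing the resulting $(j+1)\cdot\big(\nu_{\vec w}(B)/\nu_{\vec w}(2^{j+1}B)\big)^{(1-\kappa)/p}$.

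The one substantive difference is in how you absorb the factor $|b_1(y_1)-(b_1)_{2B}|$ in the second sub-term. You propose an $L\log L$--$\exp L$ Orlicz H\"older inequality on each shell; the paper instead applies ordinary H\"older with exponent $p_1$ against the dual weight $\sigma_1=w_1^{1-p_1'}$, and then uses that $\sigma_1\in A_{mp_1'}\subset A_\infty$ (a consequence of $\vec w\in A_{\vec P}$ via Lemma~\ref{multi}) so that the weighted John--Nirenberg estimate \eqref{j1cb} gives the $(j+1)\|b_1\|_*$ factor directly. The paper's route is more economical here because it lands immediately on the weighted $L^{p_1}$ norm of $f_1$ over $2^{j+1}B$, exactly what the Morrey norm controls; your Orlicz route would still need an extra passage back from the unweighted $L\log L$ average to the weighted $L^{p_1}$ norm. (The Orlicz machinery is what the paper reserves for the endpoint Theorem~\ref{mainthm:4}.) Both work, but the dual-weight H\"older is the cleaner device in the strong-type regime.
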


For the endpoint case $p_1=p_2=\cdots=p_m=1$, we will also prove the following weak-type $L\log L$ estimate for the multilinear commutator $\big[\Sigma\vec{b},T_\theta\big]$ in the weighted Morrey spaces with multiple weights.

\begin{theorem}\label{mainthm:4}
Let $m\geq2$ and $\big[\Sigma\vec{b},T_\theta\big]$ be the $m$-linear commutator of $\theta$-type Calder\'on--Zygmund operator $T_{\theta}$ with $\theta$ satisfying the condition \eqref{theta2} and $\vec{b}\in \mathrm{BMO}^m$. Assume that $\vec{w}=(w_1,\ldots,w_m)\in A_{(1,\dots,1)}$ with $w_1,\ldots,w_m\in A_\infty$. If $p_i=1$, $i=1,2,\ldots,m$ and $p=1/m$, then for any given $\lambda>0$ and any ball $B\subset\mathbb R^n$, there exists a constant $C>0$ such that
\begin{equation*}
\begin{split}
&\frac{1}{\nu_{\vec{w}}(B)^{m\kappa}}\cdot\Big[\nu_{\vec{w}}\Big(\Big\{x\in B:\big|\big[\Sigma\vec{b},T_\theta\big](\vec{f})(x)\big|>\lambda^m\Big\}\Big)\Big]^m\\
&\leq C\cdot\Phi\big(\big\|\vec{b}\big\|_{\mathrm{BMO}^m}\big)
\prod_{i=1}^m\bigg\|\Phi\bigg(\frac{|f_i|}{\lambda}\bigg)\bigg\|_{(L\log L)^{1,\kappa}(w_i)},
\end{split}
\end{equation*}
where $\nu_{\vec{w}}=\prod_{i=1}^m w_i^{1/{m}}$ and $\Phi(t)=t\cdot(1+\log^+t)$.
\end{theorem}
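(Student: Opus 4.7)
The plan is to reduce the endpoint Morrey estimate on a fixed ball $B=B(x_0,r_B)$ to the global weighted weak-type bound in Theorem~\ref{Wcomm} by a localization-plus-tail argument. Split each input as $f_i=f_i^0+f_i^\infty$ with $f_i^0:=f_i\chi_{2B}$ and $f_i^\infty:=f_i\chi_{(2B)^c}$. Multilinearity produces a $2^m$-term expansion
\begin{equation*}
\bigl[\Sigma\vec{b},T_\theta\bigr](\vec{f})=\sum_{\alpha\in\{0,\infty\}^m}\bigl[\Sigma\vec{b},T_\theta\bigr]\bigl(f_1^{\alpha_1},\dots,f_m^{\alpha_m}\bigr)=:\mathcal I_0+\sum_{\alpha\neq\vec{0}}\mathcal I_\alpha,
\end{equation*}
and, by subadditivity of level sets, it suffices to control $\nu_{\vec{w}}(\{x\in B:|\mathcal I_\alpha(x)|>\lambda^m/2^{m+1}\})$ separately for the purely local piece $\mathcal I_0$ and each of the $2^m-1$ mixed pieces $\mathcal I_\alpha$.

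For $\mathcal I_0$, Theorem~\ref{Wcomm} applied to $(f_1^0,\dots,f_m^0)$ gives
\begin{equation*}
\nu_{\vec{w}}\bigl(\{x:|\mathcal I_0(x)|>\lambda^m/2^{m+1}\}\bigr)^{m}\lesssim\Phi\bigl(\|\vec{b}\|_{\mathrm{BMO}^m}\bigr)\prod_{i=1}^m\int_{2B}\Phi\bigl(|f_i|/\lambda\bigr)w_i.
\end{equation*}
Applying the Luxemburg inequality \eqref{main esti1} to the function $\Phi(|f_i|/\lambda)$ yields
\begin{equation*}
\int_{2B}\Phi\bigl(|f_i|/\lambda\bigr)w_i\leq w_i(2B)\bigl\|\Phi(|f_i|/\lambda)\bigr\|_{L\log L(w_i),2B}\leq w_i(2B)^{\kappa}\bigl\|\Phi(|f_i|/\lambda)\bigr\|_{(L\log L)^{1,\kappa}(w_i)}.
\end{equation*}
The $A_{(1,\dots,1)}$ condition together with $w_i\in A_\infty$ then reduces $\prod_i w_i(2B)^{\kappa}$ to a constant times $\nu_{\vec{w}}(B)^{m\kappa}$ via the standard chain $w_i(B)\lesssim|B|\,\mathrm{ess\,inf}_B w_i$ and $\nu_{\vec{w}}(B)\gtrsim|B|\prod_i(\mathrm{ess\,inf}_B w_i)^{1/m}$, yielding the desired local bound after division by $\nu_{\vec{w}}(B)^{m\kappa}$.

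For each mixed piece $\mathcal I_\alpha$ we freeze the BMO centers by writing $b_k(x)-b_k(y_k)=[b_k(x)-(b_k)_B]-[b_k(y_k)-(b_k)_B]$, which decomposes $\mathcal I_\alpha$ into an outer factor $(b_k-(b_k)_B)(x)\,T_\theta(f_1^{\alpha_1},\dots,f_m^{\alpha_m})(x)$ plus an inner piece with $(b_k-(b_k)_B)f_k^{\alpha_k}$ in place of $f_k^{\alpha_k}$. For $x\in B$ and at least one $y_j\in(2B)^c$, the size estimate \eqref{size} makes $|K(x,\vec{y})|$ summable over dyadic shells $2^{j+1}B\setminus 2^jB$; on each shell, John--Nirenberg controls $|(b_k)_{2^jB}-(b_k)_B|\lesssim j\|b_k\|_\ast$, and the generalized H\"older inequality in Orlicz spaces $(\exp L,\,L\log L)$ converts the remaining averages of $|f_i|$ on $2^jB$ into the $(L\log L)^{1,\kappa}(w_i)$-norms on the right-hand side. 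Chebyshev's inequality with exponent $1/m$ on $B$ then produces a bound of the same form as in the local case.

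The main obstacle lies in the tail analysis of the mixed terms. Passing from the $L^1$-average of $|f_i|$ on $2^jB$ to its $L\log L$-Luxemburg average through Orlicz--H\"older produces an additional logarithmic factor, while each commutator entry contributes a polynomial-in-$j$ growth from John--Nirenberg. These combined logarithmic losses are absorbed exactly by the strengthened regularity condition \eqref{theta2}, $\int_0^1\theta(t)(1+|\log t|)/t\,dt<\infty$: after the substitution $t\approx 2^{-j}$, the $\theta$-modulus of continuity of $K$ on dyadic shells produces just enough decay to dominate the single $\log$ emerging from the one BMO factor per summand of $\bigl[\Sigma\vec{b},T_\theta\bigr]$, in parallel with the linear endpoint argument of \cite{zhang2}.
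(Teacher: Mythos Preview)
Your overall architecture---localize to $2B$, apply Theorem~\ref{Wcomm} to the purely local term, and treat the $2^m-1$ mixed terms by freezing $b_k$ at $(b_k)_B$ and using Chebyshev with exponent $1/m$---matches the paper exactly. The local term and the splitting $[b,T_\theta]_1(\vec f)=(b-b_B)T_\theta(\vec f)-T_\theta((b-b_B)f_1,f_2,\dots,f_m)$ are handled just as in the paper.

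There is, however, a genuine gap in your tail analysis. You claim that the logarithmic losses from John--Nirenberg are ``absorbed exactly by the strengthened regularity condition \eqref{theta2}\dots the $\theta$-modulus of continuity of $K$ on dyadic shells produces just enough decay.'' This is not how the argument works, and it cannot work as written. On the mixed pieces the paper (and your own earlier sentence) only uses the \emph{size} condition \eqref{size}, which on the $j$-th shell gives $|K(x,\vec y)|\lesssim|2^{j+1}B|^{-m}$; this by itself produces no decay in $j$ whatsoever after integrating the $|f_i|$ over $2^{j+1}B$. The kernel smoothness is never invoked for the tails in this theorem---condition \eqref{theta2} enters only through Theorem~\ref{Wcomm} on the local piece. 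The actual source of summability is the weight/Morrey structure: after pulling out $\|\Phi(|f_i|/\lambda)\|_{(L\log L)^{1,\kappa}(w_i)}$ and using \eqref{wanghua1}, one is left with $\sum_{j\ge1}(j+1)\bigl[\nu_{\vec{w}}(B)/\nu_{\vec{w}}(2^{j+1}B)\bigr]^{m(1-\kappa)}$, and this converges because $\nu_{\vec{w}}\in A_1\subset A_\infty$ gives the geometric bound \eqref{psi1}. Your proposed mechanism (summing $\theta(2^{-j})$) would require a center-value subtraction that you never perform.

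A second, smaller omission: for the ``outer'' piece $(b_k-(b_k)_B)(x)\,T_\theta(\vec f^\alpha)(x)$, after Chebyshev with exponent $1/m$ you must bound $\bigl(\int_B|b-b_B|^{1/m}\nu_{\vec{w}}\bigr)^m$. Because $1/m<1$, this is not a direct consequence of Lemma~\ref{BMO}; the paper handles it via the reverse H\"older property of $\nu_{\vec{w}}\in A_1$ (claim \eqref{assertion}). Your sketch does not indicate this step.
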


\begin{rek}
From the above definitions and Theorem \ref{mainthm:4}, we can roughly say that the multilinear commutator $\big[\Sigma\vec{b},T_\theta\big]$ is bounded from $(L\log L)^{1,\kappa}(w_1)\times(L\log L)^{1,\kappa}(w_2)\times\cdots
\times(L\log L)^{1,\kappa}(w_m)$ into $WL^{1/m,\kappa}(\nu_{\vec{w}})$ with $\nu_{\vec{w}}=\prod_{i=1}^m w_i^{1/{m}}$.
\end{rek}

\section{Notations and preliminaries}
\label{sec3}
\subsection{Multiple weights}
For any $r>0$ and $x\in\mathbb R^n$, let $B(x,r)=\big\{y\in\mathbb R^n:|x-y|<r\big\}$ denote the open ball centered at $x$ with radius $r$, $B(x,r)^{\complement}=\mathbb R^n\backslash B(x,r)$ denote its complement and $|B(x,r)|$ be the Lebesgue measure of the ball $B(x,r)$. We also use the notation $\chi_{B(x,r)}$ to denote the characteristic function of $B(x,r)$. For some $t>0$, the notation $tB$ stands for the ball with the same center as $B$ whose radius is $t$ times that of $B$.

A weight $w$ is said to belong to the Muckenhoupt class $A_p$ for $1<p<+\infty$, if there exists a constant $C>0$ such that
\begin{equation*}
\bigg(\frac1{|B|}\int_B w(x)\,dx\bigg)^{1/p}\bigg(\frac1{|B|}\int_B w(x)^{-p'/p}\,dx\bigg)^{1/{p'}}\leq C
\end{equation*}
for every ball $B$ in $\mathbb R^n$, where $p'$ is the conjugate exponent of $p$ such that $1/p+1/{p'}=1$. The class $A_1$ is defined replacing the above inequality by
\begin{equation*}
\frac1{|B|}\int_B w(x)\,dx\leq C\cdot\underset{x\in B}{\mbox{ess\,inf}}\;w(x)
\end{equation*}
for every ball $B$ in $\mathbb R^n$. Since the $A_p$ classes are increasing with respect to $p$, the $A_\infty$ class of weights is defined in a natural way by
\begin{equation*}
A_\infty:=\bigcup_{1\leq p<+\infty}A_p.
\end{equation*}
Moreover, the following characterization will often be used in the sequel. There are positive constants $C$ and $\delta$ such that for any ball $B$ and any measurable set $E$ contained in $B$,
\begin{equation}\label{compare}
\frac{w(E)}{w(B)}\leq C\bigg(\frac{|E|}{|B|}\bigg)^\delta.
\end{equation}
Given a Lebesgue measurable set $E$, we denote the characteristic function of $E$ by $\chi_E$. We say that a weight $w$ satisfies the doubling condition, simply denoted by $w\in\Delta_2$, if there is an absolute constant $C>0$ such that
\begin{equation}\label{weights}
w(2B)\leq C\,w(B)
\end{equation}
holds for any ball $B$ in $\mathbb R^n$. If $w\in A_p$ with $1\leq p<+\infty$ (or $w\in A_\infty$), then we have that $w\in\Delta_2$.

Recently, the theory of multiple weights adapted to multilinear Calder\'on--Zygmund operators was developed by Lerner et al. in \cite{lerner}. New more refined multilinear maximal function was defined and used in \cite{lerner} to characterize the class of multiple $A_{\vec{P}}$ weights, and to obtain some weighted estimates for multilinear Calder\'on--Zygmund operators. Now let us recall the definition of multiple weights. For $m$ exponents $p_1,\ldots,p_m\in[1,+\infty)$, we will often write $\vec{P}$ for the vector $\vec{P}=(p_1,\ldots,p_m)$, and $p$ for the number given by $1/p=\sum_{k=1}^m 1/{p_k}$ with $p\in[1/m,+\infty)$. Given $\vec{w}=(w_1,\ldots,w_m)$, let us set
\begin{equation*}
\nu_{\vec{w}}=\prod_{k=1}^m w_k^{p/{p_k}}.
\end{equation*}
We say that $\vec{w}$ satisfies the multilinear $A_{\vec{P}}$ condition if it satisfies
\begin{equation}\label{multiweight}
\sup_B\bigg(\frac{1}{|B|}\int_B \nu_{\vec{w}}(x)\,dx\bigg)^{1/p}\prod_{k=1}^m\bigg(\frac{1}{|B|}\int_B w_k(x)^{-p'_k/{p_k}}\,dx\bigg)^{1/{p'_k}}<+\infty.
\end{equation}
When $p_k=1$ for some $k\in\{1,2,\dots,m\}$, the condition $\big(\frac{1}{|B|}\int_B w_k(x)^{-p'_k/{p_k}}\,dx\big)^{1/{p'_k}}$ is understood as $\big(\inf_{x\in B}w_k(x)\big)^{-1}$. In particular, when each $p_k=1$, $k=1,2,\dots,m$, we denote $A_{\vec{1}}=A_{(1,\dots,1)}$. One can easily check that $A_{(1,\dots,1)}$ is contained in $A_{\vec{P}}$ for each $\vec{P}$, however, the classes $A_{\vec{P}}$ are NOT increasing with the natural partial order (see \cite[Remark 7.3]{lerner}). It was shown in \cite{lerner} that these are the largest classes of weights for which all multilinear Calder\'on--Zygmund operators are bounded on weighted Lebesgue spaces. Moreover, in general, the condition $\vec{w}\in A_{\vec{P}}$ does not imply $w_k\in L^1_{\mathrm{loc}}(\mathbb R^n)$ for any $1\leq k\leq m$ (see \cite[Remark 7.2]{lerner}), but instead
\begin{lemma}[\cite{lerner}]\label{multi}
Let $p_1,\ldots,p_m\in[1,+\infty)$ and $1/p=\sum_{k=1}^m 1/{p_k}$. Then $\vec{w}=(w_1,\ldots,w_m)\in A_{\vec{P}}$ if and only if
\begin{equation}\label{multi2}
\left\{
\begin{aligned}
&\nu_{\vec{w}}\in A_{mp},\\
&w_k^{1-p'_k}\in A_{mp'_k},\quad k=1,\ldots,m,
\end{aligned}\right.
\end{equation}
where $\nu_{\vec{w}}=\prod_{k=1}^m w_k^{p/{p_k}}$ and the condition $w_k^{1-p'_k}\in A_{mp'_k}$ in the case $p_k=1$ is understood as $w_k^{1/m}\in A_1$.
\end{lemma}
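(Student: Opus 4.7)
My plan is to prove the equivalence directly from the definitions, using only the generalized H\"older inequality together with one key exponent identity. This is essentially the argument from \cite{lerner}.

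\textbf{Preliminary reductions.} First I would unify notation by observing that $p'_k/p_k = p'_k-1$, so $w_k^{-p'_k/p_k}=w_k^{1-p'_k}$ when $p_k>1$. This rewrites the $A_{\vec{P}}$ inequality \eqref{multiweight} in the symmetric form
\begin{equation*}
\sup_B \Big(\frac{1}{|B|}\int_B \nu_{\vec{w}}\Big)^{1/p}\prod_{k=1}^m\Big(\frac{1}{|B|}\int_B w_k^{1-p'_k}\Big)^{1/{p'_k}}<+\infty.
\end{equation*}
The key algebraic identity that drives the whole proof is
\begin{equation*}
\frac{1}{mp}+\sum_{k=1}^m\frac{1}{mp'_k}=1,
\end{equation*}
which follows from $\sum_k 1/p'_k = m-1/p$. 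I would verify this at the outset, since both implications pivot on it.

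\textbf{The $(\Leftarrow)$ direction.} Assume the system \eqref{multi2}. The idea is to decompose the integrand $\nu_{\vec{w}}^{1/p}$ as a product of ``pieces'' matched to the H\"older exponents $mp,mp'_1,\dots,mp'_m$. Concretely, one writes
\begin{equation*}
w_k(x)^{p/p_k}=\Big[\nu_{\vec{w}}(x)^{1/(mp)}\Big]^{?}\prod_{j}\Big[w_j(x)^{(1-p'_j)/(mp'_j)}\Big]^{?}
\end{equation*}
with exponents dictated by the identity above, apply the generalized H\"older inequality on $B$ with exponents $(mp,mp'_1,\dots,mp'_m)$, and then invoke each of the hypotheses $\nu_{\vec{w}}\in A_{mp}$ and $w_k^{1-p'_k}\in A_{mp'_k}$ in the form $\Big(\frac{1}{|B|}\int_B w^{1-q'}\Big)^{q-1}\Big(\frac{1}{|B|}\int_B w\Big)\leq C$ to cancel the cross terms.

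\textbf{The $(\Rightarrow)$ direction.} Assume $\vec{w}\in A_{\vec{P}}$. To deduce $\nu_{\vec{w}}\in A_{mp}$ one needs to bound $\Big(\frac{1}{|B|}\int_B\nu_{\vec{w}}\Big)\Big(\frac{1}{|B|}\int_B\nu_{\vec{w}}^{1-(mp)'}\Big)^{mp-1}$. Note that $\nu_{\vec{w}}^{1-(mp)'}=\prod_k w_k^{-p/(p_k(mp-1))}$, which by H\"older (with a judicious choice of exponents) can be dominated by $\prod_k \big(\frac{1}{|B|}\int_B w_k^{1-p'_k}\big)^{(p'_k-1)/\text{something}}$; comparing with the rewritten $A_{\vec{P}}$ inequality above then gives the bound. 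The conditions $w_k^{1-p'_k}\in A_{mp'_k}$ are obtained in a completely symmetric way, swapping the role of the ``$k$-th'' factor with $\nu_{\vec{w}}$ through duality.

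\textbf{Main obstacles.} The principal difficulties are (i) the exponent bookkeeping required to exhibit the correct H\"older split and to match each resulting factor to the precise form of an $A_q$ constant, and (ii) the degenerate case $p_k=1$, in which $p'_k=\infty$ and $w_k^{1-p'_k}$ is formally undefined. In that case the factor $\Big(\frac{1}{|B|}\int_B w_k^{-p'_k/p_k}\Big)^{1/p'_k}$ in \eqref{multiweight} must be read as $(\operatorname{ess\,inf}_B w_k)^{-1}$, and the condition $w_k^{1-p'_k}\in A_{mp'_k}$ collapses to $w_k^{1/m}\in A_1$, i.e.\ $\frac{1}{|B|}\int_B w_k^{1/m}\lesssim \operatorname{ess\,inf}_B w_k^{1/m}$. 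The two arguments above go through with this reinterpretation after replacing the relevant $L^{p'_k}$-average by an $L^\infty$-bound; I would carry out this translation case by case (single $p_k=1$, several $p_k=1$, and the fully degenerate $\vec{P}=(1,\dots,1)$) to conclude.
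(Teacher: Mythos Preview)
The paper does not give its own proof of this lemma; it is quoted from \cite{lerner} and used as a black box. Your sketch---reducing both implications to the exponent identity $1/(mp)+\sum_k 1/(mp'_k)=1$ and a generalized H\"older inequality, with the $p_k=1$ cases handled by the essential-infimum reinterpretation---is exactly the argument of \cite{lerner}, so your approach is correct and matches the cited source.
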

Observe that in the linear case $m=1$ both conditions included in \eqref{multi2} represent the same $A_p$ condition. However, in the multilinear case $m\geq2$ neither of the conditions in \eqref{multi2} implies the other. We refer the reader to \cite{lerner} for further details.

\subsection{Orlicz spaces and Luxemburg norms}
Next we recall some basic definitions and facts from the theory of Orlicz spaces. For more information about these spaces the reader may consult the book \cite{rao}. Let $\mathcal A:[0,+\infty)\rightarrow[0,+\infty)$ be a Young function. That is, a continuous, convex and strictly increasing function with $\mathcal A(0)=0$ and such that $\mathcal A(t)\to +\infty$ as $t\to +\infty$. Given a Young function $\mathcal A$ and a ball $B$ in $\mathbb R^n$, we consider the $\mathcal A$-average of a function $f$ over a ball $B$, which is given by the following Luxemburg norm:
\begin{equation*}
\big\|f\big\|_{\mathcal A,B}
:=\inf\bigg\{\lambda>0:\frac{1}{|B|}\int_B\mathcal{A}\bigg(\frac{|f(x)|}{\lambda}\bigg)dx\leq1\bigg\}.
\end{equation*}
When $\mathcal A(t)=t^p$ with $1\leq p<+\infty$, it is easy to see that
\begin{equation*}
\big\|f\big\|_{\mathcal A,B}=\bigg(\frac{1}{|B|}\int_B\big|f(x)\big|^p\,dx\bigg)^{1/p};
\end{equation*}
that is, the Luxemburg norm coincides with the normalized $L^p$ norm. Associated to each Young function $\mathcal A$, one can define its complementary function $\bar{\mathcal A}$ by
\begin{equation*}
\bar{\mathcal A}(s):=\sup_{0\leq t<+\infty}\big[st-\mathcal A(t)\big], \quad 0\leq s<+\infty.
\end{equation*}
It is not difficult to check that such $\bar{\mathcal A}$ is also a Young function. A standard computation shows that for all $t>0$,
\begin{equation*}
t\leq \mathcal{A}^{-1}(t)\bar{\mathcal A}^{-1}(t)\leq 2t.
\end{equation*}
From this, it follows that the following generalized H\"older's inequality in Orlicz spaces holds for any given ball $B$ in $\mathbb R^n$.
\begin{equation*}
\frac{1}{|B|}\int_B\big|f(x)\cdot g(x)\big|\,dx\leq 2\big\|f\big\|_{\mathcal A,B}\big\|g\big\|_{\bar{\mathcal A},B}.
\end{equation*}
A particular case of interest, and especially in this paper, is the Young function $\Phi(t)=t\cdot(1+\log^+t)$, and we know that its complementary Young function is given by $\bar{\Phi}(t)\approx\exp(t)-1$. The corresponding averages will be denoted by
\begin{equation*}
\big\|f\big\|_{\Phi,B}=\big\|f\big\|_{L\log L,B} \qquad\mbox{and}\qquad
\big\|g\big\|_{\bar{\Phi},B}=\big\|g\big\|_{\exp L,B}.
\end{equation*}
Consequently, from the above generalized H\"older's inequality in Orlicz spaces, we also get
\begin{equation}\label{holder}
\frac{1}{|B|}\int_B\big|f(x)\cdot g(x)\big|\,dx\leq 2\big\|f\big\|_{L\log L,B}\big\|g\big\|_{\exp L,B}.
\end{equation}
To obtain endpoint weak-type estimates for the multilinear and iterated commutators on the product of weighted Morrey spaces, we need to define the $\mathcal A$-average of a function $f$ over a ball $B$ by means of the weighted Luxemburg norm; that is, given a Young function $\mathcal A$ and $w\in A_\infty$, we define (see \cite{rao,zhang})
\begin{equation*}
\big\|f\big\|_{\mathcal A(w),B}:=\inf\bigg\{\sigma>0:\frac{1}{w(B)}
\int_B\mathcal A\bigg(\frac{|f(x)|}{\sigma}\bigg)\cdot w(x)\,dx\leq1\bigg\}.
\end{equation*}
When $\mathcal A(t)=t$, this norm is denoted by $\|\cdot\|_{L(w),B}$, when $\Phi(t)=t\cdot(1+\log^+t)$, this norm is also denoted by $\|\cdot\|_{L\log L(w),B}$. The complementary Young function of $\Phi(t)$ is $\bar{\Phi}(t)\approx\exp(t)-1$ with the corresponding Luxemburg norm denoted by $\|\cdot\|_{\exp L(w),B}$. For $w\in A_\infty$ and for every ball $B$ in $\mathbb R^n$, we can also show the weighted version of \eqref{holder}. Namely, the following generalized H\"older's inequality in the weighted context is true for $f,g$ (see \cite{zhang} for instance).
\begin{equation}\label{Wholder}
\frac{1}{w(B)}\int_B\big|f(x)\cdot g(x)\big|w(x)\,dx\leq C\big\|f\big\|_{L\log L(w),B}\big\|g\big\|_{\exp L(w),B}.
\end{equation}
This estimate will play an important role in the proof of Theorem \ref{mainthm:4}.

\section{Proofs of Theorems \ref{mainthm:1} and \ref{mainthm:2}}

This section is concerned with the proofs of Theorems \ref{mainthm:1} and \ref{mainthm:2}. Before proving the main theorems of this section, we first state the following important results without proof (see \cite{duoand} and \cite{grafakos2}).
\begin{lemma}[\cite{grafakos2}]\label{Min}
Let $\big\{f_k\big\}_{k=1}^N$ be a sequence of $L^p(\nu)$ functions with $0<p<+\infty$ and $\nu\in A_\infty$. Then we have
\begin{equation*}
\Big\|\sum_{k=1}^N f_k\Big\|_{L^p(\nu)}\leq\mathcal{C}(p,N)\sum_{k=1}^N\big\|f_k\big\|_{L^p(\nu)},
\end{equation*}
where $\mathcal{C}(p,N)=\max\big\{1,N^{\frac{1-p}{p}}\big\}$. More specifically, $\mathcal{C}(p,N)=1$ for $1\leq p<+\infty$, and $\mathcal{C}(p,N)=N^{\frac{1-p}{p}}$ for $0<p<1$.
\end{lemma}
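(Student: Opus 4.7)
The plan is to split into two cases according to whether $p\geq 1$ or $0<p<1$, since both the constant and the mechanism of proof differ. For $1\leq p<+\infty$, the space $L^p(\nu)$ is a genuine normed space and the conclusion is nothing but the triangle inequality, i.e.\ Minkowski's inequality in $L^p(\nu)$, which produces the claim with constant $\mathcal{C}(p,N)=1$ without using any special hypothesis on $\nu$.

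The substantive case is $0<p<1$, which I would handle in two steps. First, apply the elementary pointwise subadditivity $(\sum_{k=1}^N|f_k(x)|)^p\leq\sum_{k=1}^N|f_k(x)|^p$, valid for nonnegative reals and $0<p\leq 1$, and integrate against $\nu(x)\,dx$. After taking $p$-th roots this yields the intermediate bound
$$\Big\|\sum_{k=1}^N f_k\Big\|_{L^p(\nu)}\leq\bigg(\sum_{k=1}^N\big\|f_k\big\|_{L^p(\nu)}^{\,p}\bigg)^{1/p}.$$

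Second, I would convert this $\ell^p$-sum into the desired $\ell^1$-sum by invoking the power mean inequality: for nonnegative reals $a_1,\dots,a_N$ and $0<p<1$ one has
$$\bigg(\sum_{k=1}^N a_k^{\,p}\bigg)^{1/p}\leq N^{(1-p)/p}\sum_{k=1}^N a_k,$$
which is equivalent to H\"older's inequality applied with the pair of exponents $1/p$ and $1/(1-p)$ on the uniform counting measure of an $N$-point set. Chaining these two displays with $a_k=\|f_k\|_{L^p(\nu)}$ delivers the stated inequality with $\mathcal{C}(p,N)=N^{(1-p)/p}$.

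No step of the argument actually uses the assumption $\nu\in A_\infty$; this is merely the ambient hypothesis natural to the applications of the lemma elsewhere in the paper, and in particular the $A_\infty$ doubling property is not invoked. The only point requiring genuine care is pinning down the optimal exponent $(1-p)/p$ on $N$ in the sub-unital regime, which is precisely the quantitative content of the power mean inequality; consequently I expect no serious obstacle in carrying out the plan.
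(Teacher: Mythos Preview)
Your argument is correct and is the standard proof of this elementary inequality; the paper itself does not give a proof but simply cites Grafakos \cite{grafakos2}, so there is nothing to compare. Your observation that the hypothesis $\nu\in A_\infty$ plays no role is also accurate.
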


\begin{lemma}[\cite{grafakos2}]\label{WMin}
Let $\big\{f_k\big\}_{k=1}^N$ be a sequence of $WL^p(\nu)$ functions with $0<p<+\infty$ and $\nu\in A_\infty$. Then we have
\begin{equation*}
\Big\|\sum_{k=1}^N f_k\Big\|_{WL^p(\nu)}\leq\mathcal{C}'(p,N)\sum_{k=1}^N\big\|f_k\big\|_{WL^p(\nu)},
\end{equation*}
where $\mathcal{C}'(p,N)=\max\big\{N,N^{\frac{\,1\,}{p}}\big\}$. More specifically, $\mathcal{C}'(p,N)=N$ for $1\leq p<+\infty$, and $\mathcal{C}'(p,N)=N^{\frac{\,1\,}{p}}$ for $0<p<1$.
\end{lemma}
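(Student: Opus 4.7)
The goal is a quasi-triangle inequality for the weak Lebesgue space $WL^p(\nu)$. My plan is to reduce everything to the elementary set inclusion
\begin{equation*}
\bigl\{x\in\mathbb R^n:\bigl|\textstyle\sum_{k=1}^N f_k(x)\bigr|>\lambda\bigr\}\subseteq\bigcup_{k=1}^N\bigl\{x\in\mathbb R^n:|f_k(x)|>\lambda/N\bigr\},
\end{equation*}
which holds for every $\lambda>0$ because if every summand is at most $\lambda/N$ in absolute value, then $|\sum_k f_k|\leq\lambda$ by the ordinary triangle inequality for real numbers. Taking $\nu$-measure and using countable subadditivity yields
\begin{equation*}
\nu\bigl(\bigl\{\bigl|\textstyle\sum_{k=1}^N f_k\bigr|>\lambda\bigr\}\bigr)\leq\sum_{k=1}^N\nu\bigl(\bigl\{|f_k|>\lambda/N\bigr\}\bigr).
\end{equation*}

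Next I would raise both sides to the power $1/p$ and handle the two regimes separately. For $1\leq p<+\infty$ the map $t\mapsto t^{1/p}$ is concave and vanishes at $0$, hence subadditive on $[0,+\infty)$, so
\begin{equation*}
\nu\bigl(\bigl\{\bigl|\textstyle\sum_{k=1}^N f_k\bigr|>\lambda\bigr\}\bigr)^{1/p}\leq\sum_{k=1}^N\nu\bigl(\bigl\{|f_k|>\lambda/N\bigr\}\bigr)^{1/p}.
\end{equation*}
For $0<p<1$ the map $t\mapsto t^{1/p}$ is convex with $1/p>1$, so Jensen's inequality (equivalently, the power-mean inequality) gives
\begin{equation*}
\Bigl(\sum_{k=1}^N a_k\Bigr)^{1/p}\leq N^{1/p-1}\sum_{k=1}^N a_k^{1/p},\qquad a_k\geq0.
\end{equation*}
Applying either bound to the measure inequality, multiplying both sides by $\lambda$, and then writing $\lambda=N\cdot(\lambda/N)$ inside each term lets me factor out a pure power of $N$ and recognize the weak-type quasi-norm of each $f_k$ evaluated at the level $\lambda/N$:
\begin{equation*}
\lambda\cdot\bigl(\lambda/N\bigr)^{-1}\cdot(\lambda/N)\cdot\nu\bigl(\bigl\{|f_k|>\lambda/N\bigr\}\bigr)^{1/p}\leq N\,\bigl\|f_k\bigr\|_{WL^p(\nu)}.
\end{equation*}
Combining the two cases produces the factor $\mathcal C'(p,N)=\max\{N,N^{1/p}\}$, which equals $N$ for $p\geq1$ (where $1/p\leq1$) and equals $N^{1/p}$ for $0<p<1$ (where $1/p>1$).

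Finally, taking the supremum over $\lambda>0$ on the left gives $\|\sum_k f_k\|_{WL^p(\nu)}$, and the constants on the right are already independent of $\lambda$, so the claimed inequality follows. The only mildly subtle point is the passage through the $1/p$ power in the sub-unit exponent regime, where subadditivity fails and must be replaced by the power-mean inequality; this is where the extra factor $N^{1/p-1}$ enters and combines with the factor $N$ coming from $\lambda=N\cdot(\lambda/N)$ to yield $N^{1/p}$. Notice that the hypothesis $\nu\in A_\infty$ is not actually used in the argument; it appears because this lemma is invoked later with Muckenhoupt weights, and it is harmless to record the assumption in the statement.
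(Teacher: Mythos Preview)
Your argument is correct and is exactly the standard proof one finds in Grafakos's textbook; the paper itself does not give a proof of this lemma but merely cites \cite{grafakos2}. Your observation that the hypothesis $\nu\in A_\infty$ plays no role is also accurate---the inequality holds for an arbitrary measure.
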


\begin{lemma}[\cite{duoand}]\label{Ainfty}
Let $w\in A_\infty$. Then for any ball $B$ in $\mathbb R^n$, the following reverse Jensen's inequality holds.
\begin{equation*}
\int_B w(x)\,dx\leq C|B|\cdot\exp\bigg(\frac{1}{|B|}\int_B\log w(x)\,dx\bigg).
\end{equation*}
\end{lemma}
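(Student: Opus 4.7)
The plan is to derive the reverse Jensen inequality from the characterization $w\in A_\infty$ iff $w\in A_q$ for some $q\in[1,+\infty)$, recalled in Section~\ref{sec3}. Taking $q>1$ (the $q=1$ case is easier, or absorbed by $A_1\subset A_q$ for any $q>1$), the $A_q$ inequality supplies a constant $C>0$ such that for every ball $B\subset\mathbb R^n$,
\begin{equation*}
\bigg(\frac{1}{|B|}\int_B w(x)\,dx\bigg)\bigg(\frac{1}{|B|}\int_B w(x)^{-1/(q-1)}\,dx\bigg)^{q-1}\leq C.
\end{equation*}

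The key step is to bound the second factor from below by the exponential of an average of $\log w$. Applying the forward Jensen inequality to the concave function $\log$ yields
\begin{equation*}
\frac{1}{|B|}\int_B w(x)^{-1/(q-1)}\,dx\geq\exp\bigg(-\frac{1}{q-1}\cdot\frac{1}{|B|}\int_B\log w(x)\,dx\bigg).
\end{equation*}
Raising this to the power $q-1$, inserting it into the $A_q$ inequality, rearranging the resulting estimate, and finally multiplying through by $|B|$ produces
\begin{equation*}
\int_B w(x)\,dx\leq C|B|\cdot\exp\bigg(\frac{1}{|B|}\int_B\log w(x)\,dx\bigg),
\end{equation*}
as required.

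The only subtlety worth checking is that $\log w$ is locally integrable so that the right-hand side is well-defined and finite. This follows from $w,w^{-1/(q-1)}\in L^1_{\mathrm{loc}}(\mathbb R^n)$ (a standard consequence of $w\in A_q$) together with the elementary comparison $|\log t|\lesssim t+t^{-1/(q-1)}$ on $(0,+\infty)$, which forces $\log w\in L^1_{\mathrm{loc}}(\mathbb R^n)$. Beyond this bookkeeping I do not anticipate any substantive obstacle; the argument is essentially a one-line manipulation of the $A_q$ inequality combined with a single invocation of Jensen applied to $\log$, so the main conceptual input is simply recognizing which quantitative characterization of $A_\infty$ to invoke.
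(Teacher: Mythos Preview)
The paper does not supply its own proof of this lemma; it simply quotes the result from \cite{duoand}. Your argument is correct and is essentially the standard proof one finds in that reference: pick $q>1$ with $w\in A_q$, apply Jensen's inequality to $\log$ on the average of $w^{-1/(q-1)}$, and rearrange the $A_q$ inequality.
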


We are now in a position to prove Theorems $\ref{mainthm:1}$ and $\ref{mainthm:2}$.
\begin{proof}[Proof of Theorem $\ref{mainthm:1}$]
Let $1<p_1,\dots,p_m<+\infty$ and $\vec{f}=(f_1,\dots,f_m)$ be in $L^{p_1,\kappa}(w_1)\times\cdots
\times L^{p_m,\kappa}(w_m)$ with $\vec{w}=(w_1,\dots,w_m)\in A_{\vec{P}}$ and $0<\kappa<1$. For any given ball $B$ in $\mathbb R^n$(denote by $x_0$ the center of $B$, and $r>0$ the radius of $B$), it is enough for us to show that
\begin{equation}\label{wangh1}
\frac{1}{\nu_{\vec{w}}(B)^{\kappa/p}}\bigg(\int_{B}\big|T_\theta(f_1,\dots,f_m)(x)\big|^p\nu_{\vec{w}}(x)\,dx\bigg)^{1/p}
\lesssim\prod_{i=1}^m\big\|f_i\big\|_{L^{p_i,\kappa}(w_i)}.
\end{equation}
To this end, for any $1\leq i\leq m$, we represent $f_i$ as
\begin{equation*}
f_i=f_i\cdot\chi_{2B}+f_i\cdot\chi_{(2B)^{\complement}}:=f^0_i+f^{\infty}_i;
\end{equation*}
and $2B=B(x_0,2r)$. Then we write
\begin{equation*}
\begin{split}
\prod_{i=1}^m f_i(y_i)&=\prod_{i=1}^m\Big(f^0_i(y_i)+f^{\infty}_i(y_i)\Big)\\
&=\sum_{\beta_1,\ldots,\beta_m\in\{0,\infty\}}f^{\beta_1}_1(y_1)\cdots f^{\beta_m}_m(y_m)\\
&=\prod_{i=1}^m f^0_i(y_i)+\sum_{(\beta_1,\dots,\beta_m)\in\mathfrak{L}}f^{\beta_1}_1(y_1)\cdots f^{\beta_m}_m(y_m),
\end{split}
\end{equation*}
where
\begin{equation*}
\mathfrak{L}:=\big\{(\beta_1,\dots,\beta_m):\beta_k\in\{0,\infty\},\mbox{there is at least one $\beta_k\neq0$},1\leq k\leq m\big\};
\end{equation*}
that is, each term of $\sum$ contains at least one $\beta_k\neq0$. Since $T_{\theta}$ is an $m$-linear operator, then by Lemma \ref{Min}($N=2^m$), we have
\begin{align}\label{I}
&\frac{1}{\nu_{\vec{w}}(B)^{\kappa/p}}\bigg(\int_{B}\big|T_\theta(f_1,\dots,f_m)(x)\big|^p\nu_{\vec{w}}(x)\,dx\bigg)^{1/p}\notag\\
&\leq\frac{C}{\nu_{\vec{w}}(B)^{\kappa/p}}
\bigg(\int_{B}\big|T_\theta(f^0_1,\dots,f^0_m)(x)\big|^p\nu_{\vec{w}}(x)\,dx\bigg)^{1/p}\notag\\
&+\sum_{(\beta_1,\dots,\beta_m)\in\mathfrak{L}}\frac{C}{\nu_{\vec{w}}(B)^{\kappa/p}}
\bigg(\int_{B}\big|T_\theta(f^{\beta_1}_1,\ldots,f^{\beta_m}_m)(x)\big|^p\nu_{\vec{w}}(x)\,dx\bigg)^{1/p}\notag\\
&:=I^{0,\dots,0}+\sum_{(\beta_1,\dots,\beta_m)\in\mathfrak{L}} I^{\beta_1,\dots,\beta_m}.
\end{align}
By the weighted strong-type estimate of $T_{\theta}$ (see Theorem \ref{strong}), we have
\begin{align}\label{I0}
I^{0,\dots,0}&\leq C\cdot\frac{1}{\nu_{\vec{w}}(B)^{\kappa/p}}
\prod_{i=1}^m\bigg(\int_{2B}|f_i(x)|^{p_i}w_i(x)\,dx\bigg)^{1/{p_i}}\notag\\
&\leq C\prod_{i=1}^m\big\|f_i\big\|_{L^{p_i,\kappa}(w_i)}\cdot
\frac{1}{\nu_{\vec{w}}(B)^{\kappa/p}}\prod_{i=1}^m w_i(2B)^{\kappa/{p_i}}.
\end{align}
Let $p_1,\ldots,p_m\in[1,+\infty)$ and $p\in[1/m,+\infty)$ with $1/p=\sum_{i=1}^m 1/{p_i}$. We first claim that under the assumptions of Theorem \ref{mainthm:1} (or Theorem \ref{mainthm:2}), the following result holds for any ball $\mathcal{B}$ in $\mathbb R^n$:
\begin{equation}\label{wanghua1}
\prod_{i=1}^m\bigg(\int_{\mathcal{B}} w_i(x)\,dx\bigg)^{p/{p_i}}\lesssim \int_{\mathcal{B}}\nu_{\vec{w}}(x)\,dx,
\end{equation}
provided that $w_1,\ldots,w_m\in A_\infty$ and $\nu_{\vec{w}}=\prod_{i=1}^m w_i^{p/{p_i}}$.
In fact, since $w_1,\ldots,w_m\in A_\infty$, by using Lemma \ref{Ainfty}, then we have
\begin{equation*}
\begin{split}
\prod_{i=1}^m\bigg(\int_{\mathcal{B}}w_i(x)\,dx\bigg)^{p/{p_i}}
&\leq C\prod_{i=1}^m\bigg[|\mathcal{B}|\cdot\exp\bigg(\frac{1}{|\mathcal{B}|}\int_{\mathcal{B}}\log w_i(x)\,dx\bigg)\bigg]^{p/{p_i}}\\
&=C\prod_{i=1}^m\bigg[|\mathcal{B}|^{p/{p_i}}\cdot\exp\bigg(\frac{1}{|\mathcal{B}|}\int_{\mathcal{B}}\log w_i(x)^{p/{p_i}}\,dx\bigg)\bigg]\\
&= C\cdot\big(|\mathcal{B}|\big)^{\sum_{i=1}^m p/{p_i}}\cdot
\exp\bigg(\sum_{i=1}^m\frac{1}{|\mathcal{B}|}\int_{\mathcal{B}}\log w_i(x)^{p/{p_i}}\,dx\bigg).
\end{split}
\end{equation*}
Note that
\begin{equation*}
\sum_{i=1}^m p/{p_i}=1 \quad \mathrm{and} \quad \nu_{\vec{w}}(x)=\prod_{i=1}^m w_i(x)^{p/{p_i}}.
\end{equation*}
Thus, by Jensen's inequality, we obtain
\begin{equation*}
\begin{split}
\prod_{i=1}^m\bigg(\int_{\mathcal{B}}w_i(x)\,dx\bigg)^{p/{p_i}}
&\leq C\cdot|\mathcal{B}|\cdot\exp\bigg(\frac{1}{|\mathcal{B}|}\int_{\mathcal{B}}\log\nu_{\vec{w}}(x)\,dx\bigg)\\
&\leq C\int_{\mathcal{B}}\nu_{\vec{w}}(x)\,dx.
\end{split}
\end{equation*}
This gives \eqref{wanghua1}. Moreover, in view of Lemma \ref{multi}, we have that $\nu_{\vec{w}}\in A_{mp}$ with $1/m<p<+\infty$.
This fact, together with \eqref{wanghua1} and \eqref{weights}, implies that
\begin{equation}\label{I00}
\begin{split}
I^{0,\dots,0}&\leq C\prod_{i=1}^m\big\|f_i\big\|_{L^{p_i,\kappa}(w_i)}\cdot
\frac{\nu_{\vec{w}}(2B)^{\kappa/p}}{\nu_{\vec{w}}(B)^{\kappa/p}}\\
&\leq C\prod_{i=1}^m\big\|f_i\big\|_{L^{p_i,\kappa}(w_i)}.
\end{split}
\end{equation}
To estimate the remaining terms in \eqref{I}, let us first consider the case when $\beta_1=\cdots=\beta_m=\infty$. By a simple geometric observation, we know that
\begin{equation*}
\overbrace{\big(\mathbb R^n\backslash 2B\big)\times\cdots\times\big(\mathbb R^n\backslash 2B\big)}^m
\subset(\mathbb R^n)^m\backslash (2B)^m,
\end{equation*}
and
\begin{equation*}
(\mathbb R^n)^m\backslash (2B)^m=\bigcup_{j=1}^\infty (2^{j+1}B)^m\backslash(2^{j}B)^m,
\end{equation*}
where we have used the notation $E^m=\overbrace{E\times\cdots\times E}^m$ for a measurable set $E$ and a positive integer $m$.
By the size condition \eqref{size} of the $\theta$-type Calder\'on--Zygmund kernel $K$, for any $x\in B$, we obtain
\begin{align}\label{4.1}
&\big|T_{\theta}(f^\infty_1,\ldots,f^\infty_m)(x)\big|\notag\\
&\lesssim\int_{(\mathbb R^n)^m\backslash(2B)^m}
\frac{|f_1(y_1)\cdots f_m(y_m)|}{(|x-y_1|+\cdots+|x-y_m|)^{mn}}\,dy_1\cdots dy_m\notag\\
&=\sum_{j=1}^\infty\int_{(2^{j+1}B)^m\backslash (2^{j}B)^m}
\frac{|f_1(y_1)\cdots f_m(y_m)|}{(|x-y_1|+\cdots+|x-y_m|)^{mn}}\,dy_1\cdots dy_m\notag\\
&\lesssim\sum_{j=1}^\infty\bigg(\frac{1}{|2^{j+1}B|^m}
\int_{(2^{j+1}B)^m\backslash (2^{j}B)^m}\big|f_1(y_1)\cdots f_m(y_m)\big|\,dy_1\cdots dy_m\bigg)\notag\\
&\leq \sum_{j=1}^\infty\bigg(\frac{1}{|2^{j+1}B|^m}
\prod_{i=1}^m\int_{2^{j+1}B}\big|f_i(y_i)\big|\,dy_i\bigg)\notag\\
&=\sum_{j=1}^\infty\bigg(\prod_{i=1}^m\frac{1}{|2^{j+1}B|}
\int_{2^{j+1}B}\big|f_i(y_i)\big|\,dy_i\bigg),
\end{align}
where we have used the fact that $|x-y_1|+\cdots+|x-y_m|\approx 2^{j+1}r\approx|2^{j+1}B|^{1/n}$ when $x\in B$ and $(y_1,\dots,y_m)\in (2^{j+1}B)^m\backslash (2^{j}B)^m$. Furthermore, by using H\"older's inequality, the multiple $A_{\vec{P}}$ condition on $\vec{w}$, we can deduce that
\begin{equation*}
\begin{split}
&\big|T_{\theta}(f^\infty_1,\ldots,f^\infty_m)(x)\big|\\
\lesssim&\sum_{j=1}^\infty\bigg\{\prod_{i=1}^m
\frac{1}{|2^{j+1}B|}\bigg(\int_{2^{j+1}B}\big|f_i(y_i)\big|^{p_i}w_i(y_i)\,dy_i\bigg)^{1/{p_i}}
\bigg(\int_{2^{j+1}B}w_i(y_i)^{-p'_i/{p_i}}\,dy_i\bigg)^{1/{p'_i}}\bigg\}\\
\leq&\sum_{j=1}^\infty\bigg\{\frac{1}{|2^{j+1}B|^m}\cdot
\frac{|2^{j+1}B|^{1/p+\sum_{i=1}^m(1-1/{p_i})}}{\nu_{\vec{w}}(2^{j+1}B)^{1/p}}
\prod_{i=1}^m\bigg(\big\|f_i\big\|_{L^{p_i,\kappa}(w_i)}w_i(2^{j+1}B)^{\kappa/{p_i}}\bigg)\bigg\}\\
=&\prod_{i=1}^m\big\|f_i\big\|_{L^{p_i,\kappa}(w_i)}
\sum_{j=1}^\infty\bigg\{\frac{1}{\nu_{\vec{w}}(2^{j+1}B)^{1/p}}\cdot
\prod_{i=1}^m w_i(2^{j+1}B)^{\kappa/{p_i}}\bigg\},
\end{split}
\end{equation*}
where in the last step we have used the fact that $1/p+\sum_{i=1}^m(1-1/{p_i})=m$. Hence, from the above pointwise estimate and \eqref{wanghua1}, we obtain
\begin{equation*}
\begin{split}
I^{\infty,\dots,\infty}&\lesssim\frac{\nu_{\vec{w}}(B)^{1/p}}{\nu_{\vec{w}}(B)^{\kappa/p}}
\cdot\prod_{i=1}^m\big\|f_i\big\|_{L^{p_i,\kappa}(w_i)}\sum_{j=1}^\infty\frac{\nu_{\vec{w}}(2^{j+1}B)^{\kappa/p}}{\nu_{\vec{w}}(2^{j+1}B)^{1/p}}\\
&=\prod_{i=1}^m\big\|f_i\big\|_{L^{p_i,\kappa}(w_i)}
\sum_{j=1}^\infty\frac{\nu_{\vec{w}}(B)^{{(1-\kappa)}/p}}{\nu_{\vec{w}}(2^{j+1}B)^{{(1-\kappa)}/p}}.
\end{split}
\end{equation*}
Since $\nu_{\vec{w}}\in A_{mp}\subset A_\infty$ by Lemma \ref{multi}, then it follows directly from the inequality \eqref{compare} with exponent $\delta>0$ that
\begin{equation}\label{psi1}
\frac{\nu_{\vec{w}}(B)}{\nu_{\vec{w}}(2^{j+1}B)}\lesssim\bigg(\frac{|B|}{|2^{j+1}B|}\bigg)^{\delta},
\end{equation}
which further implies
\begin{equation}\label{Iinftyty}
\begin{split}
I^{\infty,\dots,\infty}&\lesssim\prod_{i=1}^m\big\|f_i\big\|_{L^{p_i,\kappa}(w_i)}
\sum_{j=1}^\infty\bigg(\frac{|B|}{|2^{j+1}B|}\bigg)^{{\delta(1-\kappa)}/p}\\
&\lesssim\prod_{i=1}^m\big\|f_i\big\|_{L^{p_i,\kappa}(w_i)},
\end{split}
\end{equation}
where in the last estimate we have used the fact that $0<\kappa<1$ and $\delta>0$. We now consider the case where exactly $\ell$ of the $\beta_i$ are $\infty$ for some $1\le\ell<m$. We only give the arguments for one of these cases. The rest are similar and can be easily obtained from the arguments below by permuting the indices. In this case, by the same reason as above, we also have
\begin{equation*}
\overbrace{\big(\mathbb R^n\backslash 2B\big)\times\cdots\times\big(\mathbb R^n\backslash 2B\big)}^{\ell}
\subset(\mathbb R^n)^{\ell}\backslash (2B)^{\ell},
\end{equation*}
and
\begin{equation*}
(\mathbb R^n)^{\ell}\backslash (2B)^{\ell}=\bigcup_{j=1}^\infty (2^{j+1}B)^{\ell}\backslash(2^{j}B)^{\ell},\quad 1\le\ell<m.
\end{equation*}
Using the size condition \eqref{size} again, we deduce that for any $x\in B$,
\begin{align}\label{I2yr}
&\big|T_{\theta}(f^\infty_1,\ldots,f^\infty_\ell,f^0_{\ell+1},\ldots,f^0_m)(x)\big|\notag\\
&\lesssim\int_{(\mathbb R^n)^{\ell}\backslash (2B)^{\ell}}\int_{(2B)^{m-\ell}}
\frac{|f_1(y_1)\cdots f_m(y_m)|}{(|x-y_1|+\cdots+|x-y_m|)^{mn}}dy_1\cdots dy_m\notag\\
&\lesssim\prod_{i=\ell+1}^m\int_{2B}\big|f_i(y_i)\big|\,dy_i
\times\sum_{j=1}^\infty\frac{1}{|2^{j+1}B|^m}\int_{(2^{j+1}B)^\ell\backslash (2^{j}B)^\ell}
\big|f_1(y_1)\cdots f_{\ell}(y_\ell)\big|\,dy_1\cdots dy_\ell\notag\\
&\leq\prod_{i=\ell+1}^m\int_{2B}\big|f_i(y_i)\big|\,dy_i
\times\sum_{j=1}^\infty\frac{1}{|2^{j+1}B|^m}\prod_{i=1}^{\ell}
\int_{2^{j+1}B}\big|f_i(y_i)\big|\,dy_i\notag\\
&\leq\sum_{j=1}^\infty\bigg(\prod_{i=1}^m\frac{1}{|2^{j+1}B|}\int_{2^{j+1}B}\big|f_i(y_i)\big|\,dy_i\bigg),
\end{align}
where in the last inequality we have used the inclusion relation $2B\subseteq 2^{j+1}B$ with $j\in \mathbb{N}$, and hence we arrive at the same expression considered in the previous case. Hence, we can now argue exactly as we did in the estimation of $I^{\infty,\dots,\infty}$ to obtain that for all $m$-tuples $(\beta_1,\dots,\beta_m)\in\mathfrak{L}$,
\begin{align}\label{Ibeta1}
I^{\beta_1,\dots,\beta_m}
&\lesssim\prod_{i=1}^m\big\|f_i\big\|_{L^{p_i,\kappa}(w_i)}
\sum_{j=1}^\infty\frac{\nu_{\vec{w}}(B)^{{(1-\kappa)}/p}}{\nu_{\vec{w}}(2^{j+1}B)^{{(1-\kappa)}/p}}\notag\\
&\lesssim\prod_{i=1}^m\big\|f_i\big\|_{L^{p_i,\kappa}(w_i)}
\sum_{j=1}^\infty\bigg(\frac{|B|}{|2^{j+1}B|}\bigg)^{{\delta(1-\kappa)}/p}\notag\\
&\lesssim\prod_{i=1}^m\big\|f_i\big\|_{L^{p_i,\kappa}(w_i)}.
\end{align}
Combining these estimates \eqref{I00}, \eqref{Iinftyty} and \eqref{Ibeta1}, then \eqref{wangh1} holds and concludes the proof of the theorem.
\end{proof}

\begin{proof}[Proof of Theorem $\ref{mainthm:2}$]
Let $1\leq p_1,\dots,p_m<+\infty$, $\min\{p_1,\ldots,p_m\}=1$ and $\vec{f}=(f_1,\dots,f_m)$ be in $L^{p_1,\kappa}(w_1)\times\cdots
\times L^{p_m,\kappa}(w_m)$ with $\vec{w}=(w_1,\dots,w_m)\in A_{\vec{P}}$ and $0<\kappa<1$. For an arbitrary ball $B=B(x_0,r)\subset\mathbb R^n$ with $x_0\in\mathbb R^n$ and $r>0$, we need to show that the following estimate holds.
\begin{equation}\label{wangh2}
\frac{1}{\nu_{\vec{w}}(B)^{\kappa/p}}
\lambda\cdot\nu_{\vec{w}}\big(\big\{x\in B:\big|T_{\theta}(f_1,\dots,f_m)\big|>\lambda\big\}\big)^{1/p}\lesssim
\prod_{i=1}^m\big\|f_i\big\|_{L^{p_i,\kappa}(w_i)}.
\end{equation}
To this end, we represent $f_i$ as
\begin{equation*}
f_i=f_i\cdot\chi_{2B}+f_i\cdot\chi_{(2B)^{\complement}}:=f^0_i+f^{\infty}_i,\quad \mbox{for}~~i=1,2,\dots,m.
\end{equation*}
By using Lemma \ref{WMin}($N=2^m$), one can write
\begin{align}\label{Iprime}
&\frac{1}{\nu_{\vec{w}}(B)^{\kappa/p}}
\lambda\cdot\nu_{\vec{w}}\big(\big\{x\in B:\big|T_{\theta}(f_1,\dots,f_m)\big|>\lambda\big\}\big)^{1/p}\notag\\
&\leq \frac{C}{\nu_{\vec{w}}(B)^{\kappa/p}}\lambda
\cdot\nu_{\vec{w}}\big(\big\{x\in B:\big|T_{\theta}(f^0_1,\dots,f^0_m)\big|>\lambda/{2^m}\big\}\big)^{1/p}\notag\\
&+\sum_{(\beta_1,\dots,\beta_m)\in\mathfrak{L}}\frac{C}{\nu_{\vec{w}}(B)^{\kappa/p}}\lambda
\cdot\nu_{\vec{w}}\big(\big\{x\in B:\big|T_{\theta}(f^{\beta_1}_1,\dots,f^{\beta_m}_m)\big|>\lambda/{2^m}\big\}\big)^{1/p}\notag\\
&:=I^{0,\dots,0}_\ast+\sum_{(\beta_1,\dots,\beta_m)\in\mathfrak{L}} I^{\beta_1,\dots,\beta_m}_\ast,
\end{align}
where
\begin{equation*}
\mathfrak{L}=\big\{(\beta_1,\dots,\beta_m):\beta_k\in\{0,\infty\},\mbox{there is at least one $\beta_k\neq0$},1\leq k\leq m\big\}.
\end{equation*}
By the weighted weak-type estimate of $T_{\theta}$ (see Theorem \ref{weak}), we can estimate the first term on the right hand side of \eqref{Iprime} as follows.
\begin{align}\label{II0}
I^{0,\dots,0}_\ast&\leq C\cdot\frac{1}{\nu_{\vec{w}}(B)^{\kappa/p}}
\prod_{i=1}^m\bigg(\int_{2B}|f_i(x)|^{p_i}w_i(x)\,dx\bigg)^{1/{p_i}}\notag\\
&\leq C\prod_{i=1}^m\big\|f_i\big\|_{L^{p_i,\kappa}(w_i)}\frac{1}{\nu_{\vec{w}}(B)^{\kappa/p}}\cdot
\prod_{i=1}^m w_i(2B)^{\kappa/{p_i}}.
\end{align}
Moreover, in view of Lemma \ref{multi} again, we also have $\nu_{\vec{w}}\in A_{mp}$ with $1/m\leq p<+\infty$. Then we apply the inequalities \eqref{weights} and \eqref{wanghua1} to obtain that
\begin{equation}\label{WI1yr}
\begin{split}
I^{0,\dots,0}_\ast
&\leq C\prod_{i=1}^m\big\|f_i\big\|_{L^{p_i,\kappa}(w_i)}\frac{\nu_{\vec{w}}(2B)^{\kappa/p}}{\nu_{\vec{w}}(B)^{\kappa/p}}\\
&\leq C\prod_{i=1}^m\big\|f_i\big\|_{L^{p_i,\kappa}(w_i)}.
\end{split}
\end{equation}
In the proof of Theorem \ref{mainthm:1}, we have already showed the following pointwise estimate for all $m$-tuples $(\beta_1,\dots,\beta_m)\in\mathfrak{L}$ (see \eqref{4.1} and \eqref{I2yr}).
\begin{equation}
\begin{split}
&\big|T_{\theta}(f^{\beta_1}_1,\ldots,f^{\beta_m}_m)(x)\big|
\lesssim\sum_{j=1}^\infty\bigg(\prod_{i=1}^m\frac{1}{|2^{j+1}B|}\int_{2^{j+1}B}\big|f_i(y_i)\big|\,dy_i\bigg).
\end{split}
\end{equation}
Without loss of generality, we may assume that
\begin{equation*}
p_1=\cdots=p_{\ell}=\min\{p_1,\ldots,p_m\}=1 \quad \mathrm{and} \quad p_{\ell+1},\ldots,p_m>1
\end{equation*}
with $1\leq\ell<m$. The case that $p_1=\cdots=p_m=1$ can be dealt with quite similarly and more easily. Using H\"older's inequality, the multiple $A_{\vec{P}}$ condition on $\vec{w}$, we obtain that for any $x\in B$,
\begin{equation*}
\begin{split}
&\big|T_{\theta}(f^{\beta_1}_1,\ldots,f^{\beta_m}_m)(x)\big|\\
&\lesssim\sum_{j=1}^\infty\bigg(\prod_{i=1}^{\ell}\frac{1}{|2^{j+1}B|}
\int_{2^{j+1}B}\big|f_i(y_i)\big|\,dy_i\bigg)
\times\bigg(\prod_{i=\ell+1}^{m}\frac{1}{|2^{j+1}B|}\int_{2^{j+1}B}\big|f_i(y_i)\big|\,dy_i\bigg)\\
&\lesssim\sum_{j=1}^\infty\prod_{i=1}^{\ell}\frac{1}{|2^{j+1}B|}
\bigg(\int_{2^{j+1}B}\big|f_i(y_i)\big|w_i(y_i)\,dy_i\bigg)
\bigg(\inf_{y_i\in 2^{j+1}B}w_i(y_i)\bigg)^{-1}\\
&\times\prod_{i=\ell+1}^{m}\frac{1}{|2^{j+1}B|}
\bigg(\int_{2^{j+1}B}\big|f_i(y_i)\big|^{p_i}w_i(y_i)\,dy_i\bigg)^{1/{p_i}}
\bigg(\int_{2^{j+1}B}w_i(y_i)^{-p'_i/{p_i}}\,dy_i\bigg)^{1/{p'_i}}\\
&\lesssim\prod_{i=1}^m \big\|f_i\big\|_{L^{p_i,\kappa}(w_i)}
\sum_{j=1}^\infty\bigg\{\frac{1}{\nu_{\vec{w}}(2^{j+1}B)^{1/p}}\cdot
\prod_{i=1}^m w_i(2^{j+1}B)^{\kappa/{p_i}}\bigg\}\\
&\lesssim\prod_{i=1}^m \big\|f_i\big\|_{L^{p_i,\kappa}(w_i)}\sum_{j=1}^\infty\frac{1}{\nu_{\vec{w}}(2^{j+1}B)^{{(1-\kappa)}/p}},
\end{split}
\end{equation*}
where in the last inequality we have invoked \eqref{wanghua1}. Observe that $\nu_{\vec{w}}\in A_{mp}$ with $1\le mp<\infty$. Thus, it follows directly from Chebyshev's inequality and the above pointwise estimate that
\begin{align*}
I^{\beta_1,\dots,\beta_m}_\ast
&\leq \cdot\frac{C}{\nu_{\vec{w}}(B)^{\kappa/p}}
\bigg(\int_{B}\big|T_\theta(f^{\beta_1}_1,\ldots,f^{\beta_m}_m)(x)\big|^p\nu_{\vec{w}}(x)\,dx\bigg)^{1/p}\notag\\
&\leq C\prod_{i=1}^m \big\|f_i\big\|_{L^{p_i,\kappa}(w_i)}
\sum_{j=1}^\infty\frac{\nu_{\vec{w}}(B)^{{(1-\kappa)}/p}}{\nu_{\vec{w}}(2^{j+1}B)^{{(1-\kappa)}/p}}.
\end{align*}
Moreover, in view of \eqref{psi1}, we obtain that for all $m$-tuples $(\beta_1,\dots,\beta_m)\in\mathfrak{L}$,
\begin{equation}\label{WI2yr}
\begin{split}
I^{\beta_1,\dots,\beta_m}_\ast&\lesssim\prod_{i=1}^m \big\|f_i\big\|_{L^{p_i,\kappa}(w_i)}
\sum_{j=1}^\infty\bigg(\frac{|B|}{|2^{j+1}B|}\bigg)^{{\delta(1-\kappa)}/p}\\
&\lesssim\prod_{i=1}^m\big\|f_i\big\|_{L^{p_i,\kappa}(w_i)},
\end{split}
\end{equation}
where in the last step we have used the fact $\delta>0$ and $0<\kappa<1$. Putting the estimates \eqref{WI1yr} and \eqref{WI2yr} together produces the required inequality \eqref{wangh2}. Thus, by taking the supremum over all $\lambda>0$, we finish the proof of Theorem \ref{mainthm:2}.
\end{proof}
Let $1\leq p_1,\dots,p_m\leq+\infty$. We say that $\vec{w}=(w_1,\ldots,w_m)\in \prod_{i=1}^m A_{p_i}$, if each $w_i$ is in $A_{p_i}$, $i=1,2,\dots,m$. By using H\"older's inequality, it is not difficult to check that
\begin{equation*}
\prod_{i=1}^m A_{p_i}\subset A_{\vec{P}}.
\end{equation*}
Moreover, it was shown in \cite[Remark 7.2]{lerner} that this inclusion is strict. It is clear that $\prod_{i=1}^m A_{p_i}\subset \prod_{i=1}^m A_\infty$. So we have
\begin{equation}\label{include}
\prod_{i=1}^m A_{p_i}\subset A_{\vec{P}}\bigcap\prod_{i=1}^m A_\infty.
\end{equation}
A natural question appearing here is whether the above inclusion relation is also strict. Thus, as a direct consequence of Theorems \ref{mainthm:1} and \ref{mainthm:2}, we immediately obtain the following results.
\begin{corollary}
Let $m\geq 2$ and $T_{\theta}$ be an $m$-linear $\theta$-type Calder\'on--Zygmund operator with $\theta$ satisfying the condition \eqref{theta1}. If $1<p_1,\dots,p_m<+\infty$ and $1/m<p<+\infty$ with $1/p=\sum_{i=1}^m 1/{p_i}$, and $\vec{w}=(w_1,\ldots,w_m)\in\prod_{i=1}^m A_{p_i}$, then for any $0<\kappa<1$, the multilinear operator $T_{\theta}$ is bounded from $L^{p_1,\kappa}(w_1)\times L^{p_2,\kappa}(w_2)\times\cdots
\times L^{p_m,\kappa}(w_m)$ into $L^{p,\kappa}(\nu_{\vec{w}})$ with $\nu_{\vec{w}}=\prod_{i=1}^m w_i^{p/{p_i}}$.
\end{corollary}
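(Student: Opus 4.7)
The proof is essentially a reduction: I would show that the hypothesis $\vec{w}=(w_1,\ldots,w_m)\in\prod_{i=1}^m A_{p_i}$ implies both of the two hypotheses of Theorem \ref{mainthm:1}, namely that $\vec{w}\in A_{\vec{P}}$ and that each $w_i\in A_\infty$, and then invoke Theorem \ref{mainthm:1} directly. Since the target space, the source spaces, and the exponent conditions $1/p=\sum_{i=1}^m 1/p_i$, $1/m<p<+\infty$, $0<\kappa<1$ coincide exactly with those of Theorem \ref{mainthm:1}, no additional work is needed once the hypothesis reduction is established.

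First, I would note that each $w_i\in A_{p_i}\subset A_\infty$ by definition of $A_\infty=\bigcup_{1\leq q<+\infty}A_q$, so the second half of Theorem \ref{mainthm:1}'s weight hypothesis holds for free. Second, I would verify the inclusion $\prod_{i=1}^m A_{p_i}\subset A_{\vec{P}}$, which is already stated in the excerpt just before the corollary. This amounts to checking the multilinear testing condition \eqref{multiweight}. Given a ball $B$, one applies H\"older's inequality with exponents $p_1/p,\ldots,p_m/p$ (whose reciprocals sum to $1$) to the integral defining $\nu_{\vec{w}}$,
\begin{equation*}
\frac{1}{|B|}\int_B \nu_{\vec{w}}(x)\,dx=\frac{1}{|B|}\int_B\prod_{i=1}^m w_i(x)^{p/p_i}\,dx\leq \prod_{i=1}^m\bigg(\frac{1}{|B|}\int_B w_i(x)\,dx\bigg)^{p/p_i},
\end{equation*}
and then multiplies the $1/p$-th power of this against the product $\prod_{i=1}^m\big(\frac{1}{|B|}\int_B w_i^{-p'_i/p_i}\,dx\big)^{1/p'_i}$. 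Grouping the factors by index $i$ produces exactly a product of the $A_{p_i}$ characteristics of $w_i$ (each raised to a suitable power), which is uniformly bounded by the assumption $w_i\in A_{p_i}$; when some $p_i=1$ the factor is interpreted as an essential infimum and the argument is the same.

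With $\vec{w}\in A_{\vec{P}}$ and $w_1,\ldots,w_m\in A_\infty$ thus established, Theorem \ref{mainthm:1} applies verbatim and yields the claimed boundedness of $T_\theta$ from $L^{p_1,\kappa}(w_1)\times\cdots\times L^{p_m,\kappa}(w_m)$ into $L^{p,\kappa}(\nu_{\vec{w}})$. There is no genuine obstacle here; the entire content of the corollary is the observation that the smaller class $\prod_{i=1}^m A_{p_i}$ sits inside the joint class $A_{\vec{P}}\cap\prod_{i=1}^m A_\infty$, as recorded in \eqref{include}, so any theorem proved for the latter automatically covers the former. The only ``step that could trip one up'' is the bookkeeping in the H\"older computation above (in particular the endpoint case where some $p_i=1$), but this is routine.
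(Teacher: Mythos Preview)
Your proposal is correct and matches the paper's own approach exactly: the paper records the inclusion $\prod_{i=1}^m A_{p_i}\subset A_{\vec{P}}\cap\prod_{i=1}^m A_\infty$ (established via H\"older's inequality, as you sketch) and then states the corollary ``as a direct consequence'' of Theorem~\ref{mainthm:1}. The only superfluous remark is your aside about the endpoint case $p_i=1$, which cannot arise here since the hypothesis is $1<p_1,\dots,p_m<+\infty$.
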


\begin{corollary}
Let $m\geq 2$ and $T_{\theta}$ be an $m$-linear $\theta$-type Calder\'on--Zygmund operator with $\theta$ satisfying the condition \eqref{theta1}. If $1\leq p_1,\dots,p_m<+\infty$, $\min\{p_1,\ldots,p_m\}=1$ and $1/m\leq p<+\infty$ with $1/p=\sum_{i=1}^m 1/{p_i}$, and $\vec{w}=(w_1,\ldots,w_m)\in \prod_{i=1}^m A_{p_i}$, then for any $0<\kappa<1$, the multilinear operator $T_{\theta}$ is bounded from $L^{p_1,\kappa}(w_1)\times L^{p_2,\kappa}(w_2)\times\cdots\times L^{p_m,\kappa}(w_m)$ into $WL^{p,\kappa}(\nu_{\vec{w}})$ with $\nu_{\vec{w}}=\prod_{i=1}^m w_i^{p/{p_i}}$.
\end{corollary}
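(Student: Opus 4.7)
The plan is very short: this corollary is essentially a repackaging of Theorem \ref{mainthm:2}, so I would prove it by verifying that the weights involved satisfy the hypotheses of that theorem and then invoking it directly.

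First, I would observe that the hypothesis $\vec{w}=(w_1,\ldots,w_m)\in\prod_{i=1}^m A_{p_i}$ supplies the two ingredients needed in Theorem \ref{mainthm:2}. On the one hand, by the inclusion \eqref{include}, which the authors have already noted follows from a simple application of H\"older's inequality, we have
\begin{equation*}
\prod_{i=1}^m A_{p_i}\subset A_{\vec{P}},
\end{equation*}
so in particular $\vec{w}\in A_{\vec{P}}$. On the other hand, since $A_{p_i}\subset A_\infty$ for each $1\le i\le m$, we trivially also get $w_i\in A_\infty$ for every $i$. Hence the full pair of assumptions
$\vec{w}\in A_{\vec{P}}$ together with $w_1,\ldots,w_m\in A_\infty$ is in place.

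With that verified, I would simply quote Theorem \ref{mainthm:2} with the exponents $p_1,\dots,p_m$ satisfying $\min\{p_1,\ldots,p_m\}=1$ and $1/m\le p<+\infty$, $1/p=\sum_{i=1}^m 1/p_i$, and conclude that $T_\theta$ maps $L^{p_1,\kappa}(w_1)\times\cdots\times L^{p_m,\kappa}(w_m)$ boundedly into $WL^{p,\kappa}(\nu_{\vec{w}})$, with $\nu_{\vec{w}}=\prod_{i=1}^m w_i^{p/p_i}$, for any $0<\kappa<1$. There is no substantive obstacle here; the only thing worth writing out is the one-line check that $\prod_{i=1}^m A_{p_i}$ lies inside the intersection $A_{\vec{P}}\cap\prod_{i=1}^m A_\infty$ that appears in the hypotheses of Theorem \ref{mainthm:2}. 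The same scheme of course also yields the preceding (strong-type) corollary from Theorem \ref{mainthm:1}, which is presumably why the authors merely state both as immediate consequences.
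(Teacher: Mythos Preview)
Your proposal is correct and matches the paper's own treatment: the corollary is stated as an immediate consequence of Theorem \ref{mainthm:2} via the inclusion \eqref{include}, and your verification that $\prod_{i=1}^m A_{p_i}\subset A_{\vec{P}}\cap\prod_{i=1}^m A_\infty$ is exactly the one-line check the authors indicate.
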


\section{Proofs of Theorems \ref{mainthm:3} and \ref{mainthm:4}}
To prove our main theorems for multilinear commutators in this section, we need the following lemmas about $\mathrm{BMO}$ functions.
\begin{lemma}\label{BMO}
Let $b$ be a function in $\mathrm{BMO}(\mathbb R^n)$. Then
\begin{enumerate}
  \item For every ball $B$ in $\mathbb R^n$ and for all $j\in\mathbb N$,
\begin{equation*}
\big|b_{2^{j+1}B}-b_B\big|\leq C\cdot(j+1)\|b\|_*.
\end{equation*}
  \item Let $1\leq p<+\infty$. For every ball $B$ in $\mathbb R^n$ and for all $\omega\in A_{\infty}$,
\begin{equation*}
\bigg(\int_B\big|b(x)-b_B\big|^p\omega(x)\,dx\bigg)^{1/p}\leq C\|b\|_*\cdot \omega(B)^{1/p}.
\end{equation*}
\end{enumerate}
\end{lemma}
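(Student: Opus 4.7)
The plan is to prove the two parts separately, both using classical arguments. The first part is a telescoping estimate combined with the defining property of $\mathrm{BMO}$; the second part reduces to the John--Nirenberg inequality together with the $A_\infty$-comparability between $\omega$-measure and Lebesgue measure.

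For part (1), I would write
\begin{equation*}
b_{2^{j+1}B}-b_B=\sum_{k=0}^{j}\bigl(b_{2^{k+1}B}-b_{2^{k}B}\bigr),
\end{equation*}
and control each difference by taking the average over the smaller ball: since $2^k B\subset 2^{k+1}B$ and $|2^{k+1}B|=2^n|2^k B|$, we get
\begin{equation*}
\bigl|b_{2^{k+1}B}-b_{2^{k}B}\bigr|
=\bigg|\frac{1}{|2^{k}B|}\int_{2^{k}B}\bigl(b(x)-b_{2^{k+1}B}\bigr)\,dx\bigg|
\le\frac{2^{n}}{|2^{k+1}B|}\int_{2^{k+1}B}\bigl|b(x)-b_{2^{k+1}B}\bigr|\,dx
\le 2^{n}\|b\|_{\ast}.
\end{equation*}
Summing from $k=0$ to $k=j$ yields the desired bound $C(j+1)\|b\|_{\ast}$ with $C=2^{n}$.

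For part (2), the main tool is the John--Nirenberg inequality: there exist absolute constants $c_{1},c_{2}>0$ such that for every ball $B$ and every $\lambda>0$,
\begin{equation*}
\bigl|\{x\in B:|b(x)-b_{B}|>\lambda\}\bigr|\le c_{1}|B|\,\exp\!\bigl(-c_{2}\lambda/\|b\|_{\ast}\bigr).
\end{equation*}
Since $\omega\in A_{\infty}$, the estimate \eqref{compare} applied to $E=\{x\in B:|b(x)-b_{B}|>\lambda\}$ gives, for some $\delta>0$,
\begin{equation*}
\omega(E)\le C\,\omega(B)\bigl(|E|/|B|\bigr)^{\delta}
\le C'\omega(B)\exp\!\bigl(-c_{2}\delta\lambda/\|b\|_{\ast}\bigr).
\end{equation*}
Then I would use the layer-cake representation
\begin{equation*}
\int_{B}|b(x)-b_{B}|^{p}\omega(x)\,dx=p\int_{0}^{\infty}\lambda^{p-1}\omega\bigl(\{x\in B:|b(x)-b_{B}|>\lambda\}\bigr)\,d\lambda
\end{equation*}
and insert the exponential decay to obtain a gamma integral, producing the bound $C\|b\|_{\ast}^{p}\omega(B)$; taking $p$-th roots finishes the proof.

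The only real obstacle is the $A_{\infty}$ step in part (2): one must cite or invoke the quantitative $A_{\infty}$-comparison in \eqref{compare} to transfer the Lebesgue-measure decay supplied by John--Nirenberg to $\omega$-measure decay. Once this is available, the remaining work is the elementary layer-cake computation. Part (1) is straightforward from the $\mathrm{BMO}$ definition and the doubling of Lebesgue measure.
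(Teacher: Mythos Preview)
Your proof is correct and entirely standard. The paper does not actually prove this lemma: it simply refers the reader to \cite{wang1}. Your argument for part~(1) is the usual telescoping estimate, and your argument for part~(2) via John--Nirenberg plus the $A_\infty$ comparison \eqref{compare} and the layer-cake formula is exactly the mechanism the paper itself uses a few lines later to prove the closely related Lemma~\ref{BMO3}; so your approach is fully consistent with the paper's methods.
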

\begin{proof}
For the proofs of the above results, we refer the reader to \cite{wang1}.
\end{proof}
Based on Lemma \ref{BMO}, we now assert that for any $j\in \mathbb{N}$ and $\omega\in A_{\infty}$, the estimate
\begin{equation}\label{j1cb}
\bigg(\int_{2^{j+1}B}\big|b(x)-b_B\big|^p\omega(x)\,dx\bigg)^{1/p}\leq C(j+1)\|b\|_*\cdot \omega(2^{j+1}B)^{1/p}
\end{equation}
holds whenever $b\in \mathrm{BMO}(\mathbb R^n)$ and $1\leq p<+\infty$. Indeed, by using Lemma \ref{BMO} (1) and (2), we could easily obtain
\begin{equation*}
\begin{split}
&\bigg(\int_{2^{j+1}B}\big|b(x)-b_B\big|^p\omega(x)\,dx\bigg)^{1/p}\\
&\leq \bigg(\int_{2^{j+1}B}\big|b(x)-b_{2^{j+1}B}\big|^p\omega(x)\,dx\bigg)^{1/p}
+\bigg(\int_{2^{j+1}B}\big|b_{2^{j+1}B}-b_B\big|^p\omega(x)\,dx\bigg)^{1/p}\\
&\leq C\|b\|_*\cdot \omega(2^{j+1}B)^{1/p}+C(j+1)\|b\|_*\cdot \omega(2^{j+1}B)^{1/p}\\
&\leq C(j+1)\|b\|_*\cdot \omega(2^{j+1}B)^{1/p},
\end{split}
\end{equation*}
as desired. Next, let us set up the following result.
\begin{lemma}\label{BMO3}
Let $b$ be a function in $\mathrm{BMO}(\mathbb R^n)$. Then for any ball $B$ in $\mathbb R^n$ and any $\omega\in A_{\infty}$, we have
\begin{equation}\label{BMOwang}
\big\|b-b_{B}\big\|_{\exp L(\omega),B}\leq C\|b\|_*.
\end{equation}
\end{lemma}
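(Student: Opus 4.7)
The plan is to establish \eqref{BMOwang} by combining the weighted John-Nirenberg inequality with a layer-cake representation of the exponential integral defining the Luxemburg norm. Recall that for $\omega\in A_\infty$ and $b\in\mathrm{BMO}(\mathbb{R}^n)$, a weighted analogue of the classical John-Nirenberg inequality asserts the existence of positive constants $c_1,c_2$, depending only on $n$ and the $A_\infty$ constant of $\omega$, such that for every ball $B\subset\mathbb{R}^n$ and every $\lambda>0$,
\begin{equation*}
\omega\bigl(\{x\in B:|b(x)-b_B|>\lambda\}\bigr)\leq c_1\,\omega(B)\exp\bigl(-c_2\lambda/\|b\|_*\bigr).
\end{equation*}
This standard result can be derived either by running a weighted Calder\'on--Zygmund stopping-time decomposition on $b-b_B$ over $B$, or by bootstrapping the unweighted John-Nirenberg inequality through the reverse H\"older property for $A_\infty$ weights together with the comparison \eqref{compare}.

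By the definition of $\|\cdot\|_{\exp L(\omega),B}$, it is enough to exhibit a constant $C>0$, depending only on $n$ and $[\omega]_{A_\infty}$, so that with the choice $\sigma=C\|b\|_*$ the inequality $\omega(B)^{-1}\int_B[\exp(|b(x)-b_B|/\sigma)-1]\omega(x)\,dx\leq 1$ holds. Applying the layer-cake formula with $\Phi(u)=e^u-1$ and then the weighted John-Nirenberg estimate above, we obtain
\begin{align*}
\int_B\bigl[e^{|b(x)-b_B|/(C\|b\|_*)}-1\bigr]\omega(x)\,dx
&=\int_0^\infty e^s\,\omega\bigl(\{x\in B:|b(x)-b_B|>sC\|b\|_*\}\bigr)\,ds\\
&\leq c_1\omega(B)\int_0^\infty e^{(1-c_2C)s}\,ds.
\end{align*}
Choosing $C$ large enough that $c_2C>1+c_1$ renders the last integral at most $1/c_1$, whence the displayed quantity is bounded by $\omega(B)$, which yields \eqref{BMOwang} with $C$ depending only on $n$ and $[\omega]_{A_\infty}$.

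The main obstacle is simply invoking the weighted John-Nirenberg inequality, which is not explicitly recorded earlier in the excerpt but is classical for $A_\infty$ weights; once it is in hand, the remainder is the one-line distribution-function computation above followed by a suitable choice of the constant $C$. No further regularity on $b$ or $\omega$ beyond $b\in\mathrm{BMO}$ and $\omega\in A_\infty$ is needed.
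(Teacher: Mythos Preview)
Your proof is correct and follows essentially the same route as the paper: the paper derives the weighted John--Nirenberg inequality from the unweighted one via the $A_\infty$ comparison property \eqref{compare} (one of the two derivations you mention) and then passes to exponential integrability, whereas you make the layer-cake step and the choice of the constant $C$ more explicit.
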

\begin{proof}
By the well-known John--Nirenberg's inequality (see \cite{john}), we know that there exist two positive constants $C_1$ and $C_2$, depending only on the dimension $n$, such that for any $\lambda>0$,
\begin{equation*}
\big|\big\{x\in B:|b(x)-b_B|>\lambda\big\}\big|\leq C_1|B|\exp\bigg\{-\frac{C_2\lambda}{\|b\|_{*}}\bigg\}.
\end{equation*}
This result shows that in some sense logarithmic growth is the maximum possible for BMO functions (more precisely, we can take $C_1=\sqrt{2}$, $C_2=\log 2/{2^{n+2}}$, see \cite[p.123--125]{duoand}). Applying the comparison property \eqref{compare} of $A_{\infty}$ weights, there is a positive number $\delta>0$ such that
\begin{equation*}
\omega\big(\big\{x\in B:|b(x)-b_B|>\lambda\big\}\big)\leq C_1\omega(B)\exp\bigg\{-\frac{C_2\delta\lambda}{\|b\|_{*}}\bigg\}.
\end{equation*}
From this, it follows that ($c_0$ and $C$ are two constants)
\begin{equation*}
\frac{1}{\omega(B)}\int_B\exp\bigg(\frac{|b(y)-b_B|}{c_0\|b\|_*}\bigg)\omega(y)\,dy\leq C,
\end{equation*}
which yields \eqref{BMOwang}.
\end{proof}
Furthermore, by \eqref{BMOwang} and Lemma \ref{BMO}(1), it is easy to check that for each $\omega$ in $A_{\infty}$ and for any ball $B$ in $\mathbb R^n$,
\begin{equation}\label{Jensen}
\big\|b-b_{B}\big\|_{\exp L(\omega),2^{j+1}B}\leq C(j+1)\|b\|_\ast,\quad j\in \mathbb{N}.
\end{equation}
We are now in a position to give the proofs of Theorems \ref{mainthm:3} and \ref{mainthm:4}.
\begin{proof}[Proof of Theorem $\ref{mainthm:3}$]
Let $1<p_1,\dots,p_m<+\infty$ and $\vec{f}=(f_1,\dots,f_m)$ be in $L^{p_1,\kappa}(w_1)\times\cdots
\times L^{p_m,\kappa}(w_m)$ with $\vec{w}=(w_1,\dots,w_m)\in A_{\vec{P}}$ and $0<\kappa<1$. As was pointed out in \cite{lerner}, by linearity it is enough to consider the multilinear commutator $[\Sigma b,T_{\theta}]$ with only one symbol. Without loss of generality, we fix $b\in \mathrm{BMO}(\mathbb R^n)$, and then consider the operator
\begin{equation*}
\big[b,T_\theta\big]_1(\vec{f})(x)=b(x)\cdot T_\theta(f_1,f_2,\dots,f_m)(x)-T_{\theta}(bf_1,f_2,\dots,f_m)(x).
\end{equation*}
For each fixed ball $B=B(x_0,r)\subset\mathbb R^n$, it is enough to prove that
\begin{equation}\label{wangh3}
\frac{1}{\nu_{\vec{w}}(B)^{\kappa/p}}
\bigg(\int_{B}\big|\big[b,T_{\theta}\big]_1(f_1,\dots,f_m)(x)\big|^p\nu_{\vec{w}}(x)\,dx\bigg)^{1/p}
\lesssim\|b\|_\ast\prod_{i=1}^m\big\|f_i\big\|_{L^{p_i,\kappa}(w_i)}.
\end{equation}
As before, we decompose $f_i$ as
$f_i=f^0_i+f^{\infty}_0$,
where $f^i_0=f_i\cdot\chi_{2B}$ and $f^{\infty}_i=f_i\cdot\chi_{(2B)^{\complement}}$, $i=1,2,\dots,m$. We set $tB=B(x_0,tr)$ for any $t>0$. Let $\mathfrak{L}$ be the same as before. By using Lemma \ref{Min}($N=2^m$), we can write
\begin{align}\label{J}
&\frac{1}{\nu_{\vec{w}}(B)^{\kappa/p}}
\bigg(\int_{B}\big|\big[b,T_{\theta}\big]_1(f_1,\dots,f_m)(x)\big|^p\nu_{\vec{w}}(x)\,dx\bigg)^{1/p}\notag\\
&\leq C\cdot\frac{1}{\nu_{\vec{w}}(B)^{\kappa/p}}
\bigg(\int_{B}\big|\big[b,T_{\theta}\big]_1(f^0_1,\dots,f^0_m)(x)\big|^p\nu_{\vec{w}}(x)\,dx\bigg)^{1/p}\notag\\
&+C\sum_{(\beta_1,\dots,\beta_m)\in\mathfrak{L}}\frac{1}{\nu_{\vec{w}}(B)^{\kappa/p}}
\bigg(\int_{B}\big|\big[b,T_{\theta}\big]_1(f^{\beta_1}_1,\ldots,f^{\beta_m}_m)(x)\big|^p\nu_{\vec{w}}(x)\,dx\bigg)^{1/p}\notag\\
&:=J^{0,\dots,0}+\sum_{(\beta_1,\dots,\beta_m)\in\mathfrak{L}} J^{\beta_1,\dots,\beta_m}.
\end{align}
To estimate the first summand of \eqref{J}, applying Theorem \ref{comm} along with \eqref{weights} and \eqref{wanghua1}, we get
\begin{align}\label{J1}
J^{0,\dots,0}&\leq C\cdot\frac{1}{\nu_{\vec{w}}(B)^{\kappa/p}}
\prod_{i=1}^m\bigg(\int_{2B}|f_i(x)|^{p_i}w_i(x)\,dx\bigg)^{1/{p_i}}\notag\\
&\leq C\prod_{i=1}^m\big\|f_i\big\|_{L^{p_i,\kappa}(w_i)}\cdot
\frac{1}{\nu_{\vec{w}}(B)^{\kappa/p}}\prod_{i=1}^m w_i(2B)^{\kappa/{p_i}}\notag\\
&\leq C\prod_{i=1}^m\big\|f_i\big\|_{L^{p_i,\kappa}(w_i)}\cdot
\frac{\nu_{\vec{w}}(2B)^{\kappa/p}}{\nu_{\vec{w}}(B)^{\kappa/p}}\notag\\
&\leq C\prod_{i=1}^m\big\|f_i\big\|_{L^{p_i,\kappa}(w_i)}.
\end{align}
To estimate the remaining terms in \eqref{J}, let us first consider the case when $\beta_1=\cdots=\beta_m=\infty$. It is easy to see that for any $x\in B$,
\begin{equation*}
\big[b,T_\theta\big]_1(\vec{f})(x)=[b(x)-b_B]\cdot T_\theta(f_1,f_2,\dots,f_m)(x)-T_{\theta}((b-b_B)f_1,f_2,\dots,f_m)(x).
\end{equation*}
Hence, we divide the term $J^{\infty,\dots,\infty}$ into two parts below.
\begin{equation*}
\begin{split}
J^{\infty,\dots,\infty}&\leq C\cdot\frac{1}{\nu_{\vec{w}}(B)^{\kappa/p}}
\bigg(\int_{B}\big|[b(x)-b_B]\cdot T_\theta(f^\infty_1,f^{\infty}_2,\dots,f^\infty_m)(x)\big|^p\nu_{\vec{w}}(x)\,dx\bigg)^{1/p}\\
&+C\cdot\frac{1}{\nu_{\vec{w}}(B)^{\kappa/p}}
\bigg(\int_{B}\big|T_\theta((b-b_B)f^\infty_1,f^{\infty}_2,\dots,f^\infty_m)(x)\big|^p
\nu_{\vec{w}}(x)\,dx\bigg)^{1/p}\\
&:=J^{\infty,\dots,\infty}_{\star}+J^{\infty,\dots,\infty}_{\star\star}.
\end{split}
\end{equation*}
Next, we estimate each term separately. In the proof of Theorem \ref{mainthm:1}, we have already shown that (see \eqref{4.1})
\begin{equation*}
\big|T_{\theta}(f^\infty_1,f^{\infty}_2,\ldots,f^\infty_m)(x)\big|
\lesssim\sum_{j=1}^\infty\bigg(\prod_{i=1}^m\frac{1}{|2^{j+1}B|}\int_{2^{j+1}B}\big|f_i(y_i)\big|\,dy_i\bigg).
\end{equation*}
Note that $\nu_{\vec{w}}\in A_{mp}\subset A_{\infty}$. From Lemma \ref{BMO}(2), it follows that
\begin{equation*}
\begin{split}
J^{\infty,\dots,\infty}_{\star}
&\lesssim\frac{1}{\nu_{\vec{w}}(B)^{\kappa/p}}
\sum_{j=1}^\infty\bigg(\prod_{i=1}^m\frac{1}{|2^{j+1}B|}
\int_{2^{j+1}B}\big|f_i(y_i)\big|\,dy_i\bigg)\\
&\times\bigg(\int_{B}\big|b(x)-b_B\big|^p\nu_{\vec{w}}(x)\,dx\bigg)^{1/p}\\
&\lesssim\|b\|_\ast\cdot\nu_{\vec{w}}(B)^{1/{p}-\kappa/p}
\sum_{j=1}^\infty\bigg(\prod_{i=1}^m\frac{1}{|2^{j+1}B|}
\int_{2^{j+1}B}\big|f_i(y_i)\big|\,dy_i\bigg).
\end{split}
\end{equation*}
We then follow the same arguments as in the proof of Theorem \ref{mainthm:1} to get
\begin{align}\label{5.7}
J^{\infty,\dots,\infty}_{\star}
&\lesssim\|b\|_\ast\prod_{i=1}^m \big\|f_i\big\|_{L^{p_i,\kappa}(w_i)}
\sum_{j=1}^\infty\frac{\nu_{\vec{w}}(B)^{{(1-\kappa)}/p}}{\nu_{\vec{w}}(2^{j+1}B)^{{(1-\kappa)}/p}}\notag\\
&\lesssim\|b\|_\ast\prod_{i=1}^m \big\|f_i\big\|_{L^{p_i,\kappa}(w_i)}.
\end{align}
Using the same methods as in Theorem \ref{mainthm:1}, we can also deduce that
\begin{equation}\label{pointwise2}
\begin{split}
&\big|T_\theta((b-b_B)f^\infty_1,f^\infty_2,\dots,f^\infty_m)(x)\big|\\
&\lesssim\int_{(\mathbb R^n)^m\backslash (2B)^m}
\frac{|(b(y_1)-b_{B})f_1(y_1)|\cdot|f_2(y_2)\cdots f_m(y_m)|}{(|x-y_1|+\cdots+|x-y_m|)^{mn}}\,dy_1\cdots dy_m\notag\\
&=\sum_{j=1}^\infty\int_{(2^{j+1}B)^m\backslash (2^{j}B)^m}
\frac{|(b(y_1)-b_{B})f_1(y_1)|\cdot|f_2(y_2)\cdots f_m(y_m)|}{(|x-y_1|+\cdots+|x-y_m|)^{mn}}\,dy_1\cdots dy_m\notag\\
&\lesssim\sum_{j=1}^\infty\bigg(\frac{1}{|2^{j+1}B|^m}
\int_{(2^{j+1}B)^m\backslash (2^{j}B)^m}|(b(y_1)-b_{B})f_1(y_1)|\cdot\big|f_2(y_2)\cdots f_m(y_m)\big|\,dy_1\cdots dy_m\bigg)\notag\\
&\leq \sum_{j=1}^\infty\bigg(\frac{1}{|2^{j+1}B|^m}\int_{2^{j+1}B}|(b(y_1)-b_{B})f_1(y_1)|\,dy_1
\prod_{i=2}^m
\int_{2^{j+1}B}\big|f_i(y_i)\big|\,dy_i\bigg)\notag\\
&=\sum_{j=1}^\infty\bigg(\frac{1}{|2^{j+1}B|}\int_{2^{j+1}B}|(b(y_1)-b_{B})f_1(y_1)|\,dy_1\bigg)
\bigg(\prod_{i=2}^m\frac{1}{|2^{j+1}B|}\int_{2^{j+1}B}\big|f_i(y_i)\big|\,dy_i\bigg).
\end{split}
\end{equation}
Then we have
\begin{equation}\label{substi}
\begin{split}
J^{\infty,\dots,\infty}_{\star\star}&\lesssim\nu_{\vec{w}}(B)^{{(1-\kappa)}/p}\\
&\times\sum_{j=1}^\infty\bigg(\frac{1}{|2^{j+1}B|}\int_{2^{j+1}B}|(b(y_1)-b_{B})f_1(y_1)|\,dy_1\bigg)
\bigg(\prod_{i=2}^m\frac{1}{|2^{j+1}B|}\int_{2^{j+1}B}\big|f_i(y_i)\big|\,dy_i\bigg).
\end{split}
\end{equation}
For each $2\leq i\leq m$, by using H\"older's inequality with exponent $p_i$, we obtain that
\begin{equation*}
\int_{2^{j+1}B}\big|f_i(y_i)\big|\,dy_i\leq
\bigg(\int_{2^{j+1}B}\big|f_i(y_i)\big|^{p_i}w_i(y_i)\,dy_i\bigg)^{1/{p_i}}
\bigg(\int_{2^{j+1}B}w_i(y_i)^{-p'_i/{p_i}}\,dy_i\bigg)^{1/{p'_i}}.
\end{equation*}
According to Lemma \ref{multi}, we have $w_i^{1-p'_i}=w_i^{-p'_i/{p_i}}\in A_{mp'_i}\subset A_{\infty}$, $i=1,2,\dots,m$. By using H\"older's inequality again with exponent $p_1$ and \eqref{j1cb}, we deduce that
\begin{equation*}
\begin{split}
&\int_{2^{j+1}B}|(b(y_1)-b_{B})f_1(y_1)|\,dy_1\\
&\leq\bigg(\int_{2^{j+1}B}\big|f_1(y_1)\big|^{p_1}w_1(y_1)\,dy_1\bigg)^{1/{p_1}}
\bigg(\int_{2^{j+1}B}|b(y_1)-b_{B}|^{p'_1}w_1(y_1)^{-p'_1/{p_1}}\,dy_1\bigg)^{1/{p'_1}}\\
&\lesssim\bigg(\int_{2^{j+1}B}\big|f_1(y_1)\big|^{p_1}w_1(y_1)\,dy_1\bigg)^{1/{p_1}}
(j+1)\|b\|_*\cdot\bigg(\int_{2^{j+1}B}w_1(y_1)^{-p'_1/{p_1}}\,dy_1\bigg)^{1/{p'_1}},
\end{split}
\end{equation*}
where the last inequality is valid by the fact that $w_1^{-p'_1/{p_1}}\in A_{\infty}$. Substituting the above two estimates into the formula \eqref{substi}, we have
\begin{equation*}
\begin{split}
J^{\infty,\dots,\infty}_{\star\star}&\lesssim\|b\|_*\cdot\nu_{\vec{w}}(B)^{{(1-\kappa)}/p}\\
&\sum_{j=1}^\infty(j+1)\bigg\{\prod_{i=1}^m
\frac{1}{|2^{j+1}B|}\bigg(\int_{2^{j+1}B}\big|f_i(y_i)\big|^{p_i}w_i(y_i)\,dy_i\bigg)^{1/{p_i}}
\bigg(\int_{2^{j+1}B}w_i(y_i)^{-p'_i/{p_i}}\,dy_i\bigg)^{1/{p'_i}}\bigg\}\\
&\lesssim\|b\|_*\cdot\nu_{\vec{w}}(B)^{{(1-\kappa)}/p}\sum_{j=1}^\infty(j+1)\bigg\{\frac{1}{\nu_{\vec{w}}(2^{j+1}B)^{1/p}}
\prod_{i=1}^m\bigg(\big\|f_i\big\|_{L^{p_i,\kappa}(w_i)}w_i(2^{j+1}B)^{\kappa/{p_i}}\bigg)\bigg\}\\
&\lesssim\|b\|_\ast\prod_{i=1}^m \big\|f_i\big\|_{L^{p_i,\kappa}(w_i)}
\sum_{j=1}^\infty(j+1)\cdot\frac{\nu_{\vec{w}}(B)^{{(1-\kappa)}/p}}{\nu_{\vec{w}}(2^{j+1}B)^{{(1-\kappa)}/p}},
\end{split}
\end{equation*}
where in the last two inequalities we have used the $A_{\vec{P}}$ condition and \eqref{wanghua1}. Moreover, in view of \eqref{psi1}(since $\nu_{\vec{w}}\in A_{mp}$ with $1<mp<+\infty$), the last expression is bounded by
\begin{align}\label{5.9}
&\|b\|_\ast\prod_{i=1}^m \big\|f_i\big\|_{L^{p_i,\kappa}(w_i)}
\sum_{j=1}^\infty(j+1)\cdot\left(\frac{|B|}{|2^{j+1}B|}\right)^{\delta{(1-\kappa)}/p}\notag\\
\lesssim &\|b\|_\ast\prod_{i=1}^m \big\|f_i\big\|_{L^{p_i,\kappa}(w_i)},
\end{align}
where the last series is convergent since the exponent $\delta {(1-\kappa)}/p$ is positive. Consequently, combining the inequality \eqref{5.9} with \eqref{5.7}, we get
\begin{equation*}
J^{\infty,\dots,\infty}\lesssim \|b\|_\ast\prod_{i=1}^m \big\|f_i\big\|_{L^{p_i,\kappa}(w_i)}.
\end{equation*}
We now consider the case where exactly $\ell$ of the $\beta_i$ are $\infty$ for some $1\le\ell<m$. We only give the arguments for one of these cases. The rest are similar and can be easily obtained from the arguments below by permuting the indices. Meanwhile, we consider only the case $\beta_1=\infty$ here since the other case can be proved in the same way. We now estimate the term $\big|\big[b,T_{\theta}\big]_1(f^{\beta_1}_1,\ldots,f^{\beta_m}_m)(x)\big|$ when
\begin{equation*}
\beta_1=\cdots=\beta_{\ell}=\infty\quad \&\quad\beta_{\ell+1}=\cdots=\beta_m=0.
\end{equation*}
In our present situation, we first divide the term $J^{\beta_1,\dots,\beta_m}$ into two parts as follows.
\begin{equation*}
\begin{split}
J^{\beta_1,\dots,\beta_m}&\leq C\cdot\frac{1}{\nu_{\vec{w}}(B)^{\kappa/p}}
\bigg(\int_{B}\big|[b(x)-b_B]\cdot T_\theta(f^\infty_1,\ldots,f^\infty_\ell,f^0_{\ell+1},\ldots,f^0_m)(x)\big|^p\nu_{\vec{w}}(x)\,dx\bigg)^{1/p}\\
&+C\cdot\frac{1}{\nu_{\vec{w}}(B)^{\kappa/p}}
\bigg(\int_{B}\big|T_\theta((b-b_B)f^\infty_1,\ldots,f^\infty_\ell,f^0_{\ell+1},\ldots,f^0_m)(x)\big|^p
\nu_{\vec{w}}(x)\,dx\bigg)^{1/p}\\
&:=J^{\beta_1,\dots,\beta_m}_{\star}+J^{\beta_1,\dots,\beta_m}_{\star\star}.
\end{split}
\end{equation*}
Next, we estimate each term respectively. Recall that the following result has been proved in Theorem \ref{mainthm:1}(see \eqref{I2yr}).
\begin{equation*}
\big|T_{\theta}(f^\infty_1,\ldots,f^\infty_\ell,f^0_{\ell+1},\ldots,f^0_m)(x)\big|
\lesssim\sum_{j=1}^\infty\bigg(\prod_{i=1}^m\frac{1}{|2^{j+1}B|}\int_{2^{j+1}B}\big|f_i(y_i)\big|\,dy_i\bigg).
\end{equation*}
From Lemma \ref{BMO}(2), it then follows that
\begin{equation*}
\begin{split}
J^{\beta_1,\dots,\beta_m}_{\star}
&\lesssim\frac{1}{\nu_{\vec{w}}(B)^{\kappa/p}}
\sum_{j=1}^\infty\bigg(\prod_{i=1}^m\frac{1}{|2^{j+1}B|}
\int_{2^{j+1}B}\big|f_i(y_i)\big|\,dy_i\bigg)\\
&\times\bigg(\int_{B}\big|b(x)-b_B\big|^p\nu_{\vec{w}}(x)\,dx\bigg)^{1/p}\\
&\lesssim\|b\|_\ast\cdot\nu_{\vec{w}}(B)^{1/{p}-\kappa/p}
\sum_{j=1}^\infty\bigg(\prod_{i=1}^m\frac{1}{|2^{j+1}B|}
\int_{2^{j+1}B}\big|f_i(y_i)\big|\,dy_i\bigg).
\end{split}
\end{equation*}
We now proceed exactly as we did in the proof of Theorem \ref{mainthm:1} to obtain that
\begin{align}\label{5.12}
J^{\beta_1,\dots,\beta_m}_{\star}
&\lesssim\|b\|_\ast\prod_{i=1}^m \big\|f_i\big\|_{L^{p_i,\kappa}(w_i)}
\sum_{j=1}^\infty\frac{\nu_{\vec{w}}(B)^{{(1-\kappa)}/p}}{\nu_{\vec{w}}(2^{j+1}B)^{{(1-\kappa)}/p}}\notag\\
&\lesssim\|b\|_\ast\prod_{i=1}^m \big\|f_i\big\|_{L^{p_i,\kappa}(w_i)}.
\end{align}
On the other hand, by adopting the same method given in Theorem \ref{mainthm:1}, we can see that
\begin{align}\label{jj2yr}
&\big|T_{\theta}((b-b_B)f^\infty_1,\ldots,f^\infty_\ell,f^0_{\ell+1},\ldots,f^0_m)(x)\big|\\
&\lesssim\int_{(\mathbb R^n)^{\ell}\backslash (2B)^{\ell}}\int_{(2B)^{m-\ell}}
\frac{|(b(y_1)-b_{B})f_1(y_1)|\cdot|f_2(y_2)\cdots f_m(y_m)|}{(|x-y_1|+\cdots+|x-y_m|)^{mn}}dy_1\cdots dy_m\notag\\
&\lesssim\prod_{i=\ell+1}^m\int_{2B}\big|f_i(y_i)\big|\,dy_i\notag\\
&\times\sum_{j=1}^\infty\frac{1}{|2^{j+1}B|^m}\int_{(2^{j+1}B)^\ell\backslash (2^{j}B)^\ell}
|(b(y_1)-b_{B})f_1(y_1)|\cdot\big|f_2(y_2)\cdots f_{\ell}(y_\ell)\big|\,dy_1\cdots dy_\ell\notag\\
&\leq\prod_{i=\ell+1}^m\int_{2B}\big|f_i(y_i)\big|\,dy_i\notag\\
&\times\sum_{j=1}^\infty\frac{1}{|2^{j+1}B|^m}\int_{2^{j+1}B}|(b(y_1)-b_{B})f_1(y_1)|\,dy_1\prod_{i=2}^{\ell}
\int_{2^{j+1}B}\big|f_i(y_i)\big|\,dy_i\notag\\
&\leq\sum_{j=1}^\infty\bigg(\frac{1}{|2^{j+1}B|^m}\int_{2^{j+1}B}|(b(y_1)-b_{B})f_1(y_1)|\,dy_1
\prod_{i=2}^m\int_{2^{j+1}B}\big|f_i(y_i)\big|\,dy_i\bigg)\notag,
\end{align}
where in the last inequality we have used the inclusion relation $2B\subseteq 2^{j+1}B$ with $j\in \mathbb{N}$. For the same reason as above, we get the desired estimate.
\begin{align}\label{5.14}
J^{\beta_1,\dots,\beta_m}_{\star\star}
&\lesssim\|b\|_\ast\prod_{i=1}^m \big\|f_i\big\|_{L^{p_i,\kappa}(w_i)}
\sum_{j=1}^\infty(j+1)\cdot\frac{\nu_{\vec{w}}(B)^{{(1-\kappa)}/p}}{\nu_{\vec{w}}(2^{j+1}B)^{{(1-\kappa)}/p}}\notag\\
&\lesssim\|b\|_\ast\prod_{i=1}^m \big\|f_i\big\|_{L^{p_i,\kappa}(w_i)}.
\end{align}
Combining \eqref{5.12} and \eqref{5.14}, we conclude that
\begin{equation*}
J^{\beta_1,\dots,\beta_m}\lesssim\|b\|_\ast\prod_{i=1}^m \big\|f_i\big\|_{L^{p_i,\kappa}(w_i)}.
\end{equation*}
Summarizing the estimates derived above, then \eqref{wangh3} holds and hence the proof of Theorem \ref{mainthm:3} is complete.
\end{proof}

\begin{proof}[Proof of Theorem $\ref{mainthm:4}$]
Given $\vec{f}=(f_1,f_2,\dots,f_m)$, for any fixed ball $B=B(x_0,r)$ in $\mathbb R^n$, as before, we decompose each $f_i$ as
\begin{equation*}
f_i=f^0_i+f^{\infty}_i,~~i=1,2,\dots,m,
\end{equation*}
where $f^0_i=f_i\cdot\chi_{2B}$,$f^{\infty}_i=f_i\cdot\chi_{(2B)^\complement}$ and $2B=B(x,2r)\subset\mathbb R^n$. Again, we only consider here the multilinear commutator with only one symbol by linearity; that is, fix $b\in\mathrm{BMO}(\mathbb R^n)$ and consider the operator
\begin{equation*}
\big[b,T_\theta\big]_1(\vec{f})(x)=b(x)\cdot T_\theta(f_1,f_2,\dots,f_m)(x)-T_{\theta}(bf_1,f_2,\dots,f_m)(x).
\end{equation*}
Let $\mathfrak{L}$ be the same as before. Then for any given $\lambda>0$, by using Lemma \ref{WMin}($N=2^m$), one can write
\begin{align*}
&\frac{1}{\nu_{\vec{w}}(B)^{m\kappa}}
\cdot\Big[\nu_{\vec{w}}\big(\big\{x\in B:\big|\big[{b},T_\theta\big]_1(\vec{f})(x)\big|>\lambda^m\big\}\big)\Big]^m\notag\\
&\leq \frac{C}{\nu_{\vec{w}}(B)^{m\kappa}}
\cdot\Big[\nu_{\vec{w}}\big(\big\{x\in B:\big|\big[{b},T_\theta\big]_1(f^0_1,\dots,f^0_m)(x)\big|>\lambda^m/{2^m}\big\}\big)\Big]^m\notag\\
&+\sum_{(\beta_1,\dots,\beta_m)\in\mathfrak{L}}\frac{C}{\nu_{\vec{w}}(B)^{m\kappa}}
\cdot\Big[\nu_{\vec{w}}\big(\big\{x\in B:\big|\big[{b},T_\theta\big]_1(f^{\beta_1}_1,\ldots,f^{\beta_m}_m)(x)\big|>\lambda^m/{2^m}\big\}\big)\Big]^m\notag\\
&:=J^{0,\dots,0}_\ast+\sum_{(\beta_1,\dots,\beta_m)\in\mathfrak{L}} J^{\beta_1,\dots,\beta_m}_\ast.
\end{align*}
Observe that the Young function $\Phi(t)=t\cdot(1+\log^+t)$ satisfies the doubling condition, that is, there is a constant $C_{\Phi}>0$ such that for every $t>0$,
\begin{equation*}
\Phi(2t)\leq C_{\Phi}\,\Phi(t).
\end{equation*}
This fact together with Theorem \ref{Wcomm} yields
\begin{equation*}
\begin{split}
J^{0,\dots,0}_\ast&\leq \frac{C}{\nu_{\vec{w}}(B)^{m\kappa}}
\prod_{i=1}^m\bigg(\int_{\mathbb R^n}\Phi\bigg(\frac{2|f^0_i(x)|}{\lambda}\bigg)\cdot w_i(x)\,dx\bigg)\\
&\leq \frac{C}{\nu_{\vec{w}}(B)^{m\kappa}}
\prod_{i=1}^m\bigg(\int_{2B}\Phi\bigg(\frac{|f_i(x)|}{\lambda}\bigg)\cdot w_i(x)\,dx\bigg)\\
&=\frac{C}{\nu_{\vec{w}}(B)^{m\kappa}}
\prod_{i=1}^m w_i(2B)
\bigg(\frac{1}{w_i(2B)}\int_{2B}\Phi\bigg(\frac{|f_i(x)|}{\lambda}\bigg)\cdot w_i(x)\,dx\bigg)\\
&\leq \frac{C}{\nu_{\vec{w}}(B)^{m\kappa}}
\prod_{i=1}^m w_i(2B)\cdot\bigg\|\Phi\bigg(\frac{|f_i|}{\lambda}\bigg)\bigg\|_{L\log L(w_i),2B},
\end{split}
\end{equation*}
where in the last inequality we have used the estimate \eqref{main esti1}. Since $\vec{w}=(w_1,\ldots,w_m)\in A_{(1,\dots,1)}$, by definition, we know that
\begin{equation}\label{A11}
\bigg(\frac{1}{|\mathcal{B}|}\int_{\mathcal{B}} \nu_{\vec{w}}(x)\,dx\bigg)^{m}
\leq C\prod_{i=1}^m\inf_{x\in \mathcal{B}}w_i(x)
\end{equation}
holds for any ball $\mathcal{B}$ in $\mathbb R^n$, where $\nu_{\vec{w}}=\prod_{i=1}^m w_i^{1/{m}}$. We can rewrite this inequality as
\begin{equation*}
\begin{split}
\bigg(\frac{1}{|\mathcal{B}|}\int_{\mathcal{B}}\nu_{\vec{w}}(x)\,dx\bigg)
&\leq C\bigg(\prod_{i=1}^m\inf_{x\in \mathcal{B}}w_i(x)\bigg)^{1/m}
=C\bigg(\prod_{i=1}^m\inf_{x\in \mathcal{B}}w_i(x)^{1/m}\bigg)\\
&\leq C\bigg(\inf_{x\in \mathcal{B}}\prod_{i=1}^mw_i(x)^{1/m}\bigg)=C\cdot\inf_{x\in \mathcal{B}}\nu_{\vec{w}}(x),
\end{split}
\end{equation*}
which means that $\nu_{\vec{w}}\in A_1$. Moreover, for each $w_i$, $i=1,2,\dots,m$, it is easy to see that
\begin{equation*}
\begin{split}
\bigg(\prod_{j\neq i}\inf_{x\in \mathcal{B}}w_j(x)^{1/{m}}\bigg)^m\bigg(\frac{1}{|\mathcal{B}|}\int_{\mathcal{B}}w_i(x)^{1/{m}}\,dx\bigg)^{m}
&\leq\bigg(\frac{1}{|\mathcal{B}|}\int_{\mathcal{B}}w_i(x)^{1/{m}}\cdot\prod_{j\neq i}w_j(x)^{1/{m}}\,dx\bigg)^{m}\\
&\leq C\prod_{j=1}^m\inf_{x\in \mathcal{B}}w_j(x).
\end{split}
\end{equation*}
Also observe that
\begin{equation*}
\bigg(\prod_{j\neq i}\inf_{x\in \mathcal{B}}w_j(x)^{1/{m}}\bigg)^m=\prod_{j\neq i}\inf_{x\in \mathcal{B}}w_j(x).
\end{equation*}
From this, it follows that
\begin{equation*}
\bigg(\frac{1}{|\mathcal{B}|}\int_{\mathcal{B}}w_i(x)^{1/{m}}\,dx\bigg)^{m}\leq C\cdot\inf_{x\in \mathcal{B}}w_i(x),
\end{equation*}
which implies that $w_i^{1/{m}}\in A_1$ ($i=1,2,\dots,m$). Thus, by the inequality \eqref{weights} and \eqref{wanghua1}(taking $p_1=\cdots=p_m=1$ and $p=1/m$), we have
\begin{equation*}
\begin{split}
J^{0,\dots,0}_\ast
&\lesssim\prod_{i=1}^m\bigg\|\Phi\bigg(\frac{|f_i|}{\lambda}\bigg)\bigg\|_{(L\log L)^{1,\kappa}(w_i)}
\frac{1}{\nu_{\vec{w}}(B)^{m\kappa}}\cdot
\prod_{i=1}^m w_i(2B)^{\kappa}\\
&\lesssim\prod_{i=1}^m\bigg\|\Phi\bigg(\frac{|f_i|}{\lambda}\bigg)\bigg\|_{(L\log L)^{1,\kappa}(w_i)}
\cdot\frac{\nu_{\vec{w}}(2B)^{m\kappa}}{\nu_{\vec{w}}(B)^{m\kappa}}\\
&\lesssim\prod_{i=1}^m\bigg\|\Phi\bigg(\frac{|f_i|}{\lambda}\bigg)\bigg\|_{(L\log L)^{1,\kappa}(w_i)}.
\end{split}
\end{equation*}
It remains to estimate the term $J^{\beta_1,\dots,\beta_m}_\ast$ for $(\beta_1,\dots,\beta_m)\in\mathfrak{L}$. Recall that for any $x\in B$,
\begin{equation*}
\big[b,T_\theta\big]_1(\vec{f})(x)=[b(x)-b_B]\cdot T_\theta(f_1,f_2,\dots,f_m)(x)-T_{\theta}((b-b_B)f_1,f_2,\dots,f_m)(x).
\end{equation*}
So we can further decompose $J^{\beta_1,\dots,\beta_m}_\ast$ as
\begin{equation*}
\begin{split}
J^{\beta_1,\dots,\beta_m}_\ast
\leq&\frac{C}{\nu_{\vec{w}}(B)^{m\kappa}}
\Big[\nu_{\vec{w}}\Big(\Big\{x\in B:\big|[b(x)-b_B]\cdot T_\theta(f^{\beta_1}_1,f^{\beta_2}_2,\ldots,f^{\beta_m}_m)(x)\big|>\lambda^m/{2^{m+1}}\Big\}\Big)\Big]^m\\
&+\frac{C}{\nu_{\vec{w}}(B)^{m\kappa}}
\Big[\nu_{\vec{w}}\Big(\Big\{x\in B:\big|T_{\theta}((b-b_B)f^{\beta_1}_1,f^{\beta_2}_2,\dots,f^{\beta_m}_m)(x)\big|>\lambda^m/{2^{m+1}}\Big\}\Big)\Big]^m\\
:=&\widetilde{J}^{\beta_1,\dots,\beta_m}_\star+\widetilde{J}^{\beta_1,\dots,\beta_m}_{\star\star}.
\end{split}
\end{equation*}
By using the previous pointwise estimates \eqref{4.1} and \eqref{I2yr} together with Chebyshev's inequality, we can deduce that
\begin{equation*}
\begin{split}
\widetilde{J}^{\beta_1,\dots,\beta_m}_\star&\leq \frac{C}{\nu_{\vec{w}}(B)^{m\kappa}}
\times\frac{2^{m+1}}{\lambda^m}
\bigg(\int_{B}\big|[b(x)-b_{B}]\cdot T_\theta(f^{\beta_1}_1,f^{\beta_2}_2,\ldots,f^{\beta_m}_m)(x)\big|^{\frac{\,1\,}{m}}\nu_{\vec{w}}(x)\,dx\bigg)^m\\
&\leq \frac{C}{\nu_{\vec{w}}(B)^{m\kappa}}
\sum_{j=1}^\infty\bigg(\prod_{i=1}^m\frac{1}{|2^{j+1}B|}
\int_{2^{j+1}B}\frac{|f_i(y_i)|}{\lambda}\,dy_i\bigg)\\
&\times\bigg(\int_B\big|b(x)-b_{B}\big|^{\frac{\,1\,}{m}}\nu_{\vec{w}}(x)\,dx\bigg)^m.
\end{split}
\end{equation*}
We claim that for $2\leq m\in \mathbb{N}$ and $\nu_{\vec{w}}\in A_1$,
\begin{equation}\label{assertion}
\bigg(\int_B\big|b(x)-b_{B}\big|^{\frac{\,1\,}{m}}\nu_{\vec{w}}(x)\,dx\bigg)^m\lesssim \|b\|_*\cdot\nu_{\vec{w}}(B)^{m}.
\end{equation}
Assuming the claim \eqref{assertion} holds for the moment, then we have
\begin{equation*}
\widetilde{J}^{\beta_1,\dots,\beta_m}_\star
\lesssim\|b\|_*\cdot\nu_{\vec{w}}(B)^{m(1-\kappa)}
\sum_{j=1}^\infty\bigg(\prod_{i=1}^m\frac{1}{|2^{j+1}B|}
\int_{2^{j+1}B}\frac{|f_i(y_i)|}{\lambda}\,dy_i\bigg).
\end{equation*}
Furthermore, note that $t\leq\Phi(t)=t\cdot(1+\log^+t)$ for any $t>0$. This fact along with the multiple $A_{(1,\dots,1)}$ condition \eqref{A11} implies that
\begin{equation*}
\begin{split}
\widetilde{J}^{\beta_1,\dots,\beta_m}_\star
&\lesssim\|b\|_*\cdot\nu_{\vec{w}}(B)^{m(1-\kappa)}\\
&\times\sum_{j=1}^\infty\prod_{i=1}^m\bigg(\frac{1}{|2^{j+1}B|}
\int_{2^{j+1}B}\frac{|f_i(y_i)|}{\lambda}\cdot w_i(y_i)\,dy_i\bigg)
\left(\inf_{y_i\in 2^{j+1}B}w_i(y_i)\right)^{-1}\\
&\lesssim\|b\|_*\cdot\nu_{\vec{w}}(B)^{m(1-\kappa)}
\times\sum_{j=1}^\infty\frac{1}{\nu_{\vec{w}}(2^{j+1}B)^m}
\prod_{i=1}^m\int_{2^{j+1}B}\Phi\bigg(\frac{|f_i(y_i)|}{\lambda}\bigg)\cdot w_i(y_i)\,dy_i\\
&\lesssim\|b\|_*\cdot\nu_{\vec{w}}(B)^{m(1-\kappa)}
\times\sum_{j=1}^\infty\frac{1}{\nu_{\vec{w}}(2^{j+1}B)^m}
\prod_{i=1}^mw_i\big(2^{j+1}B\big)\bigg\|\Phi\bigg(\frac{|f_i|}{\lambda}\bigg)\bigg\|_{L\log L(w_i),2^{j+1}B},
\end{split}
\end{equation*}
where the last inequality follows from the previous estimate \eqref{main esti1}. In view of \eqref{wanghua1} and \eqref{psi1}, the last expression is bounded by
\begin{equation*}
\begin{split}
&\|b\|_*\cdot\nu_{\vec{w}}(B)^{m(1-\kappa)}\times
\sum_{j=1}^\infty\frac{1}{\nu_{\vec{w}}(2^{j+1}B)^m}\prod_{i=1}^m\bigg\|\Phi\bigg(\frac{|f_i|}{\lambda}\bigg)\bigg\|_{(L\log L)^{1,\kappa}(w_i)}
\prod_{i=1}^m w_i(2^{j+1}B)^{\kappa}\\
&\lesssim\|b\|_*\prod_{i=1}^m\bigg\|\Phi\bigg(\frac{|f_i|}{\lambda}\bigg)\bigg\|_{(L\log L)^{1,\kappa}(w_i)}
\times\sum_{j=1}^\infty\frac{\nu_{\vec{w}}(B)^{m(1-\kappa)}}{\nu_{\vec{w}}(2^{j+1}B)^{m(1-\kappa)}}\\
&\lesssim\|b\|_*\prod_{i=1}^m\bigg\|\Phi\bigg(\frac{|f_i|}{\lambda}\bigg)\bigg\|_{(L\log L)^{1,\kappa}(w_i)}.
\end{split}
\end{equation*}
Let us return to the proof of \eqref{assertion}. Since $\nu_{\vec{w}}\in A_1$, we know that $\nu_{\vec{w}}$ belongs to the reverse H\"{o}lder class $RH_s$ for some $1<s<+\infty$(see \cite{duoand} and \cite{grafakos3}). Here the reverse H\"{o}lder class is defined in the following way: $\omega\in RH_s$, if there is a constant $C>0$ such that
\begin{equation*}
\bigg(\frac{1}{|B|}\int_B\omega(x)^s\,dx\bigg)^{1/s}\leq C\bigg(\frac{1}{|B|}\int_B\omega(x)\,dx\bigg).
\end{equation*}
A further application of H\"older's inequality leads to that
\begin{equation*}
\begin{split}
\int_B\big|b(x)-b_{B}\big|^{\frac{\,1\,}{m}}\nu_{\vec{w}}(x)\,dx
&\leq|B|\bigg(\frac{1}{|B|}\int_B\big|b(x)-b_{B}\big|^{s'/m}\,dx\bigg)^{1/s'}\bigg(\frac{1}{|B|}\int_B\nu_{\vec{w}}(x)^s\,dx\bigg)^{1/s}\\
&\leq C\nu_{\vec{w}}(B)\bigg(\frac{1}{|B|}\int_B\big|b(x)-b_{B}\big|^{s'/m}\,dx\bigg)^{1/s'}.
\end{split}
\end{equation*}
Thus, there are two cases to be considered. If $s'/m<1$, then \eqref{assertion} holds by using H\"older's inequality again. If $s'/m\geq1$, then \eqref{assertion} holds by using Lemma \ref{BMO}(2). On the other hand, applying the pointwise estimates \eqref{pointwise2},\eqref{jj2yr} and Chebyshev's inequality, we have
\begin{equation*}
\begin{split}
\widetilde{J}^{\beta_1,\dots,\beta_m}_{\star\star}&\leq\frac{C}{\nu_{\vec{w}}(B)^{m\kappa}}
\times\frac{2^{m+1}}{\lambda^m}
\bigg(\int_{B}\big|T_\theta((b-b_{B})f^{\beta_1}_1,f^{\beta_2}_2,\ldots,f^{\beta_m}_m)(x)\big|^{\frac{\,1\,}{m}}
\nu_{\vec{w}}(x)\,dx\bigg)^m\\
&\leq C\cdot\nu_{\vec{w}}(B)^{m(1-\kappa)}
\sum_{j=1}^\infty\bigg(\prod_{i=2}^m\frac{1}{|2^{j+1}B|}\int_{2^{j+1}B}\frac{|f_i(y_i)|}{\lambda}\,dy_i\bigg)\\
&\times\bigg(\frac{1}{|2^{j+1}B|}\int_{2^{j+1}B}
\big|b(y_1)-b_{B}\big|\cdot\frac{|f_1(y_1)|}{\lambda}\,dy_1\bigg)\\
&\leq C\cdot\nu_{\vec{w}}(B)^{m(1-\kappa)}
\sum_{j=1}^\infty\bigg(\prod_{i=2}^m\frac{1}{|2^{j+1}B|}
\int_{2^{j+1}B}\frac{|f_i(y_i)|}{\lambda}w_i(y_i)\,dy_i\bigg)\\
&\times\bigg(\frac{1}{|2^{j+1}B|}\int_{2^{j+1}B}
\big|b(y_1)-b_{B}\big|\cdot\frac{|f_1(y_1)|}{\lambda}w_1(y_1)\,dy_1\bigg)\\
&\times\prod_{i=1}^m\left(\inf_{y_i\in 2^{j+1}B}w_i(y_i)\right)^{-1}\\
&\leq C\cdot\nu_{\vec{w}}(B)^{m(1-\kappa)}
\times\sum_{j=1}^\infty\frac{1}{\nu_{\vec{w}}(2^{j+1}B)^m}
\bigg(\prod_{i=2}^m\int_{2^{j+1}B}\frac{|f_i(y_i)|}{\lambda}w_i(y_i)\,dy_i\bigg)\\
&\times\bigg(\int_{2^{j+1}B}\big|b(y_1)-b_{B}\big|\cdot\frac{|f_1(y_1)|}{\lambda}w_1(y_1)\,dy_1\bigg),
\end{split}
\end{equation*}
where in the last inequality we have used the $A_{(1,\dots,1)}$ condition \eqref{A11}. In addition, using the fact that $t\leq \Phi(t)$ and \eqref{main esti1}, we get
\begin{equation*}
\begin{split}
&\int_{2^{j+1}B}\frac{|f_i(y_i)|}{\lambda}w_i(y_i)\,dy_i\\
&\leq\int_{2^{j+1}B}\Phi\bigg(\frac{|f_i(y_i)|}{\lambda}\bigg)\cdot w_i(y_i)\,dy_i\\
&\leq w_i\big(2^{j+1}B\big)\bigg\|\Phi\bigg(\frac{|f_i|}{\lambda}\bigg)\bigg\|_{L\log L(w_i),2^{j+1}B}.
\end{split}
\end{equation*}
Using the fact that $t\leq \Phi(t)$ and the previous estimate \eqref{Wholder}, we thus obtain
\begin{equation*}
\begin{split}
&\int_{2^{j+1}B}\big|b(y_1)-b_{B}\big|\cdot\frac{|f_1(y_1)|}{\lambda}w_1(y_1)\,dy_1\\
&\leq\int_{2^{j+1}B}\big|b(y_1)-b_{B}\big|\cdot\Phi\bigg(\frac{|f_1(y_1)|}{\lambda}\bigg)w_1(y_1)\,dy_1\\
&\leq C\cdot w_1\big(2^{j+1}B\big)
\big\|b-b_{B}\big\|_{\exp L(w_1),2^{j+1}B}
\bigg\|\Phi\bigg(\frac{|f_1|}{\lambda}\bigg)\bigg\|_{L\log L(w_1),2^{j+1}B}.
\end{split}
\end{equation*}
Furthermore, by the inequality \eqref{Jensen},
\begin{equation*}
\begin{split}
&\int_{2^{j+1}B}\big|b(y_1)-b_{B}\big|\cdot\frac{|f_1(y_1)|}{\lambda}w_1(y_1)\,dy_1\\
&\leq C(j+1)\|b\|_\ast\cdot w_1\big(2^{j+1}B\big)
\bigg\|\Phi\bigg(\frac{|f_1|}{\lambda}\bigg)\bigg\|_{L\log L(w_1),2^{j+1}B}.
\end{split}
\end{equation*}
Consequently, from the above two estimates, it follows that
\begin{align}\label{WJ3yr}
\widetilde{J}^{\beta_1,\dots,\beta_m}_{\star\star}
&\lesssim\|b\|_*\cdot\nu_{\vec{w}}(B)^{m(1-\kappa)}\notag\\
&\times\sum_{j=1}^\infty(j+1)\frac{1}{\nu_{\vec{w}}(2^{j+1}B)^m}
\prod_{i=1}^mw_i\big(2^{j+1}B\big)\bigg\|\Phi\bigg(\frac{|f_i|}{\lambda}\bigg)\bigg\|_{L\log L(w_i),2^{j+1}B}\notag\\
&\lesssim\|b\|_*\cdot\nu_{\vec{w}}(B)^{m(1-\kappa)}\notag\\
&\times
\sum_{j=1}^\infty(j+1)\frac{1}{\nu_{\vec{w}}(2^{j+1}B)^m}\prod_{i=1}^m\bigg\|\Phi\bigg(\frac{|f_i|}{\lambda}\bigg)\bigg\|_{(L\log L)^{1,\kappa}(w_i)}
\prod_{i=1}^m w_i(2^{j+1}B)^{\kappa}\notag\\
&\lesssim\|b\|_*\prod_{i=1}^m\bigg\|\Phi\bigg(\frac{|f_i|}{\lambda}\bigg)\bigg\|_{(L\log L)^{1,\kappa}(w_i)}
\times\sum_{j=1}^\infty(j+1)\frac{\nu_{\vec{w}}(B)^{m(1-\kappa)}}{\nu_{\vec{w}}(2^{j+1}B)^{m(1-\kappa)}}\notag\\
&\lesssim\|b\|_*\prod_{i=1}^m\bigg\|\Phi\bigg(\frac{|f_i|}{\lambda}\bigg)\bigg\|_{(L\log L)^{1,\kappa}(w_i)}.
\end{align}
where the last two inequalities follow from \eqref{wanghua1} and \eqref{compare}. This completes the proof of Theorem \ref{mainthm:4}.
\end{proof}

For the iterated commutator $\big[\Pi\vec{b},T_\theta\big]$, we can also establish the following results in the same manner as in Theorems \ref{mainthm:3} and \ref{mainthm:4}. The proof then needs appropriate but minor modifications and we leave the details to the reader.
\begin{theorem}\label{mainthm:5}
Let $m\geq2$ and $\big[\Pi\vec{b},T_\theta\big]$ be the iterated commutator of $\theta$-type Calder\'on--Zygmund operator $T_{\theta}$ with $\theta$ satisfying the condition \eqref{theta1} and $\vec{b}\in \mathrm{BMO}^m$. If $1<p_1,\dots,p_m<+\infty$ and $1/m<p<+\infty$ with $1/p=\sum_{i=1}^m 1/{p_i}$, and $\vec{w}=(w_1,\ldots,w_m)\in A_{\vec{P}}$ with $w_1,\ldots,w_m\in A_\infty$, then for any $0<\kappa<1$, the iterated commutator $\big[\Pi\vec{b},T_\theta\big]$ is bounded from $L^{p_1,\kappa}(w_1)\times L^{p_2,\kappa}(w_2)\times\cdots
\times L^{p_m,\kappa}(w_m)$ into $L^{p,\kappa}(\nu_{\vec{w}})$ with $\nu_{\vec{w}}=\prod_{i=1}^m w_i^{p/{p_i}}$.
\end{theorem}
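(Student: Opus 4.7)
My plan is to mimic the structure of the proof of Theorem \ref{mainthm:3}, but to replace the single-symbol linearity reduction used there with the standard ``expansion of the product'' identity that is characteristic of iterated commutators. Concretely, for each fixed ball $B = B(x_0,r)$ I would decompose $f_i = f_i^0 + f_i^\infty$ with $f_i^0 = f_i \chi_{2B}$ and $f_i^\infty = f_i \chi_{(2B)^\complement}$, expand the $m$-fold product $\prod_i f_i$, and use Lemma \ref{Min} with $N = 2^m$ to bound
\begin{equation*}
\frac{1}{\nu_{\vec{w}}(B)^{\kappa/p}}\bigg(\int_B \bigl|\bigl[\Pi\vec{b},T_\theta\bigr](\vec{f})(x)\bigr|^p \nu_{\vec{w}}(x)\,dx\bigg)^{1/p}
\end{equation*}
by the ``local'' piece $K^{0,\dots,0}$ plus a finite sum $\sum_{(\beta_1,\dots,\beta_m)\in\mathfrak{L}} K^{\beta_1,\dots,\beta_m}$ indexed exactly as in the earlier proofs.

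For the local term $K^{0,\dots,0}$ I would invoke Theorem \ref{commwh} (the weighted $L^{p_1}\times\cdots\times L^{p_m}\to L^p$ strong-type estimate for $[\Pi\vec{b},T_\theta]$) together with the doubling property \eqref{weights} of $\nu_{\vec{w}}\in A_{mp}$ and the inequality \eqref{wanghua1}, exactly as in \eqref{J1}, picking up the factor $\prod_k\|b_k\|_*$ from Theorem \ref{commwh}. For the mixed/global terms I would use the formula \eqref{iteratedc} together with the key algebraic identity
\begin{equation*}
\prod_{k=1}^m \bigl[b_k(x) - b_k(y_k)\bigr] = \sum_{S \subseteq \{1,\dots,m\}} (-1)^{m-|S|}\prod_{k \in S}\bigl[b_k(x)-b_{k,B}\bigr]\prod_{k\notin S}\bigl[b_k(y_k)-b_{k,B}\bigr],
\end{equation*}
so that each summand factors as a product of ``external'' BMO oscillations $\prod_{k\in S}[b_k(x)-b_{k,B}]$ times an integral of $T_\theta$ applied to the modified tuple in which $f_k$ is replaced by $(b_k - b_{k,B})f_k$ for $k\notin S$. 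The size estimate \eqref{size} then yields the same annular pointwise bound used in \eqref{4.1} and \eqref{I2yr}, but now with extra factors $|b_k(y_k)-b_{k,B}|$ in the corresponding integrals over $2^{j+1}B$.

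To absorb those extra BMO factors I would use, on the one hand, Lemma \ref{BMO}(2) inside the external $L^p(\nu_{\vec{w}})$ integral over $B$ (since $\nu_{\vec{w}}\in A_\infty$), giving a harmless $\|b_k\|_*\cdot\nu_{\vec{w}}(B)^{1/p}$; and on the other hand, H\"older's inequality with exponent $p_k$ followed by \eqref{j1cb} applied to the weight $w_k^{-p'_k/p_k}\in A_\infty$ (using Lemma \ref{multi}), which produces
\begin{equation*}
\int_{2^{j+1}B}|b_k(y_k)-b_{k,B}||f_k(y_k)|\,dy_k \lesssim (j+1)\|b_k\|_*\bigl\|f_k\bigr\|_{L^{p_k,\kappa}(w_k)} w_k(2^{j+1}B)^{\kappa/p_k}\bigl(\tfrac{1}{|2^{j+1}B|}\int_{2^{j+1}B}w_k^{-p'_k/p_k}\bigr)^{1/p'_k}|2^{j+1}B|.
\end{equation*}
Combining all such contributions and applying the multiple $A_{\vec{P}}$ condition exactly as in \eqref{substi}--\eqref{5.9} collapses each $K^{\beta_1,\dots,\beta_m}$ to $\prod_k\|b_k\|_*\prod_i\|f_i\|_{L^{p_i,\kappa}(w_i)}$ times a geometric series $\sum_{j\geq 1}(j+1)^m (|B|/|2^{j+1}B|)^{\delta(1-\kappa)/p}$, which converges since $\delta(1-\kappa)/p > 0$.

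The main obstacle is purely combinatorial: one must track the $2^m$ subsets $S$ in the expansion of $\prod_k[b_k(x)-b_k(y_k)]$ and verify that in every case the product of external BMO integrals over $B$ and internal BMO-weighted integrals over the annuli $2^{j+1}B \setminus 2^j B$ can be organized so that the $A_{\vec{P}}$ condition applies uniformly. The series now carries a $(j+1)^m$ factor instead of the $(j+1)$ that appeared in \eqref{5.9}, but since the geometric decay $(|B|/|2^{j+1}B|)^{\delta(1-\kappa)/p}$ dominates any polynomial growth in $j$, convergence is preserved. Apart from this bookkeeping, the argument is a verbatim iteration of the proof of Theorem \ref{mainthm:3} and thus yields \eqref{wangh3} with $\|b\|_*$ replaced by $\prod_{k=1}^m\|b_k\|_*$, proving Theorem \ref{mainthm:5}.
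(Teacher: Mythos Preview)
Your proposal is correct and is precisely what the paper has in mind: the paper does not give a separate proof of Theorem~\ref{mainthm:5} but simply states that it follows ``in the same manner as in Theorems~\ref{mainthm:3} and~\ref{mainthm:4}'' with ``appropriate but minor modifications,'' and the modifications you describe (replacing the single-symbol reduction by the subset expansion of $\prod_k[b_k(x)-b_k(y_k)]$, invoking Theorem~\ref{commwh} in place of Theorem~\ref{comm} for the local piece, and absorbing the resulting $(j+1)^m$ factor into the geometric series) are exactly those modifications. One small point worth making explicit in your write-up: when $|S|\ge 2$ you need the product bound $\bigl(\int_B\prod_{k\in S}|b_k(x)-b_{k,B}|^p\nu_{\vec w}(x)\,dx\bigr)^{1/p}\lesssim \prod_{k\in S}\|b_k\|_*\,\nu_{\vec w}(B)^{1/p}$, which for $p\ge 1$ follows from H\"older and Lemma~\ref{BMO}(2), and for $1/m<p<1$ follows by first applying H\"older with exponent $1/p$ against the measure $\nu_{\vec w}\,dx$ to reduce to the $L^1(\nu_{\vec w})$ case.
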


\begin{theorem}\label{mainthm:6}
Let $m\geq2$ and $\big[\Pi\vec{b},T_\theta\big]$ be the iterated commutator of $\theta$-type Calder\'on--Zygmund operator $T_{\theta}$ with $\theta$ satisfying the condition \eqref{theta3} and $\vec{b}\in \mathrm{BMO}^m$. Assume that $\vec{w}=(w_1,\ldots,w_m)\in A_{(1,\dots,1)}$ with $w_1,\ldots,w_m\in A_\infty$. If $p_i=1$, $i=1,2,\ldots,m$ and $p=1/m$, then for any given $\lambda>0$ and any ball $B\subset\mathbb R^n$, there exists a constant $C>0$ such that
\begin{equation*}
\begin{split}
&\frac{1}{\nu_{\vec{w}}(B)^{m\kappa}}\cdot\Big[\nu_{\vec{w}}\Big(\Big\{x\in B:\big|\big[\Pi\vec{b},T_\theta\big](\vec{f})(x)\big|>\lambda^m\Big\}\Big)\Big]^m\\
&\leq C\cdot
\prod_{i=1}^m\bigg\|\Phi^{(m)}\bigg(\frac{|f_i|}{\lambda}\bigg)\bigg\|_{(L\log L)^{1,\kappa}(w_i)},
\end{split}
\end{equation*}
where $\nu_{\vec{w}}=\prod_{i=1}^m w_i^{1/{m}}$, $\Phi(t)=t\cdot(1+\log^+t)$ and $\Phi^{(m)}=\overbrace{\Phi\circ\cdots\circ\Phi}^m$.
\end{theorem}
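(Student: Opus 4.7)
\textbf{Proof plan for Theorem \ref{mainthm:6}.} The strategy is to mirror the four-step structure used in the proof of Theorem \ref{mainthm:4}, upgrading every occurrence of $\Phi$ in the bookkeeping to $\Phi^{(m)}$ and replacing the additive commutator identity for $\bigl[\Sigma\vec b,T_\theta\bigr]$ by the iterated-product identity for $\bigl[\Pi\vec b,T_\theta\bigr]$.

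I would first fix a ball $B=B(x_0,r)$, split each $f_i=f_i^0+f_i^\infty$ with $f_i^0=f_i\chi_{2B}$, and apply Lemma \ref{WMin} with $N=2^m$ and $p=1/m$ to decompose $\nu_{\vec w}(B)^{-m\kappa}\bigl[\nu_{\vec w}(\{x\in B:|[\Pi\vec b,T_\theta](\vec f)(x)|>\lambda^m\})\bigr]^m$ into the local term $J^{0,\ldots,0}_\ast$ and the $(2^m-1)$ mixed terms $J^{\beta_1,\ldots,\beta_m}_\ast$ for $(\beta_1,\ldots,\beta_m)\in\mathfrak L$. For $J^{0,\ldots,0}_\ast$ I would invoke Theorem \ref{Wcommwh} directly (this is precisely where condition \eqref{theta3} enters), then use \eqref{main esti1} to introduce the weighted Luxemburg norms, observe that $\vec w\in A_{(1,\ldots,1)}$ implies $\nu_{\vec w}\in A_1$ and $w_i^{1/m}\in A_1$ for each $i$, and finally apply \eqref{wanghua1} together with \eqref{weights} to absorb $\prod_{i}w_i(2B)^{\kappa}$ into $\nu_{\vec w}(2B)^{m\kappa}\approx\nu_{\vec w}(B)^{m\kappa}$.

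For the mixed terms I would use the standard expansion
\begin{equation*}
\prod_{k=1}^m\bigl[b_k(x)-b_k(y_k)\bigr]=\sum_{S\subseteq\{1,\ldots,m\}}(-1)^{m-|S|}\prod_{k\in S}\bigl[b_k(x)-b_{k,B}\bigr]\prod_{k\notin S}\bigl[b_k(y_k)-b_{k,B}\bigr],
\end{equation*}
so that $\bigl[\Pi\vec b,T_\theta\bigr](\vec f^{\vec\beta})(x)$ becomes a signed sum over $S$ of $\prod_{k\in S}[b_k(x)-b_{k,B}]\cdot T_\theta(\vec g_S^{\vec\beta})(x)$, with $g_{k,S}^{\beta_k}=f_k^{\beta_k}$ for $k\in S$ and $g_{k,S}^{\beta_k}=[b_{k,B}-b_k]f_k^{\beta_k}$ for $k\notin S$. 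Chebyshev's inequality with exponent $1/m$ and the size estimate \eqref{size} applied exactly as in \eqref{4.1} and \eqref{I2yr} then yield, uniformly in $x\in B$,
\begin{equation*}
|T_\theta(\vec g_S^{\vec\beta})(x)|\lesssim\sum_{j=1}^\infty\prod_{i=1}^m\frac{1}{|2^{j+1}B|}\int_{2^{j+1}B}|g_{i,S}^{\beta_i}(y_i)|\,dy_i.
\end{equation*}
The external BMO factor is then controlled by the iterated analogue of \eqref{assertion}: since $\nu_{\vec w}\in A_1\subset RH_s$ for some $s>1$, H\"older's inequality and Lemma \ref{BMO}(2) give
\begin{equation*}
\Bigl(\int_B\prod_{k\in S}|b_k(x)-b_{k,B}|^{1/m}\nu_{\vec w}(x)\,dx\Bigr)^m\lesssim\prod_{k\in S}\|b_k\|_\ast\cdot\nu_{\vec w}(B)^m.
\end{equation*}

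For the internal averages I would iterate the generalized weighted H\"older inequality \eqref{Wholder}: each $k\notin S$ contributes an $\exp L(w_k)$-factor bounded via \eqref{Jensen} by $(j+1)\|b_k\|_\ast$, paired with an $L\log L(w_k)$-norm of $|f_k|/\lambda$; using $t\le\Phi^{(m)}(t)$ together with \eqref{main esti1}, each such $L\log L$-norm is further dominated by $w_k(2^{j+1}B)^{\kappa-1}\|\Phi^{(m)}(|f_k|/\lambda)\|_{(L\log L)^{1,\kappa}(w_k)}$. The $A_{(1,\ldots,1)}$ condition \eqref{A11} converts $1/|2^{j+1}B|^m$ into $1/\nu_{\vec w}(2^{j+1}B)^m$, while \eqref{wanghua1} and \eqref{psi1} reduce the remaining $j$-sum to one of the form $\sum_{j=1}^\infty(j+1)^m(|B|/|2^{j+1}B|)^{\delta m(1-\kappa)}$, which converges because $0<\kappa<1$ and $\delta>0$. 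The principal obstacle, as I see it, is the Young-function bookkeeping: one must verify that across the sum over all subsets $S$ and the polynomial $(j+1)^{|S^c|}$ growth coming from BMO averages over dilates, no Young function stronger than $\Phi^{(m)}$ ever appears on the right. This is consistent with Theorem \ref{Wcommwh}, which already requires exactly $m$ iterations of $\Phi$, and it explains why condition \eqref{theta3} is invoked only for the all-local piece.
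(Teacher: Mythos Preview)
Your proposal is correct and is precisely the argument the paper has in mind: the paper does not write out a proof of Theorem~\ref{mainthm:6} at all, stating only that it ``can be established in the same manner as in Theorems~\ref{mainthm:3} and~\ref{mainthm:4}'' with ``appropriate but minor modifications,'' and your outline supplies exactly those modifications.

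One remark on the point you flag as the ``principal obstacle'': the Young-function bookkeeping is in fact simpler than you suggest. For every mixed term $J^{\beta_1,\ldots,\beta_m}_\ast$, each $f_k$ carries at most \emph{one} BMO factor (namely $b_k(y_k)-b_{k,B}$ when $k\notin S$), so the argument of Theorem~\ref{mainthm:4} applied coordinate by coordinate already gives a bound in terms of $\prod_k\|\Phi(|f_k|/\lambda)\|_{(L\log L)^{1,\kappa}(w_k)}$ with only a single $\Phi$, together with a factor $(j+1)^{|S^c|}\le (j+1)^m$. Since $\Phi(t)\le\Phi^{(m)}(t)$ pointwise, this is trivially dominated by the desired right-hand side. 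The genuinely $m$-fold iterate $\Phi^{(m)}$ is forced \emph{only} by the all-local piece $J^{0,\ldots,0}_\ast$, where you invoke Theorem~\ref{Wcommwh}; this is also the sole place condition~\eqref{theta3} is used, exactly as you observe.
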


Finally, in view of the relation \eqref{include}, we have the following results.
\begin{corollary}
Let $m\geq2$ and $\vec{b}\in \mathrm{BMO}^m$. If $1<p_1,\dots,p_m<+\infty$ and $1/m<p<+\infty$ with $1/p=\sum_{i=1}^m 1/{p_i}$, and $\vec{w}=(w_1,\ldots,w_m)\in\prod_{i=1}^m A_{p_i}$, then for any $0<\kappa<1$, both the multilinear commutator $\big[\Sigma\vec{b},T_\theta\big]$ and the iterated commutator $\big[\Pi\vec{b},T_\theta\big]$ are bounded from $L^{p_1,\kappa}(w_1)\times L^{p_2,\kappa}(w_2)\times\cdots
\times L^{p_m,\kappa}(w_m)$ into $L^{p,\kappa}(\nu_{\vec{w}})$ with $\nu_{\vec{w}}=\prod_{i=1}^m w_i^{p/{p_i}}$, provided that $\theta$ satisfies the condition \eqref{theta1}.
\end{corollary}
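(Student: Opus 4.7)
The plan is to derive this final corollary directly from Theorems \ref{mainthm:3} and \ref{mainthm:5}, which give the weighted Morrey boundedness of $\big[\Sigma\vec{b},T_\theta\big]$ and $\big[\Pi\vec{b},T_\theta\big]$ respectively under the (weaker) hypotheses $\vec{w}\in A_{\vec P}$ with $w_1,\dots,w_m\in A_\infty$. So the only real task is to check that the stronger assumption $\vec{w}=(w_1,\ldots,w_m)\in\prod_{i=1}^m A_{p_i}$ implies both of these conditions simultaneously.

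First, I would observe that since $A_{p_i}\subset A_\infty$ for every $1\leq p_i<+\infty$, the assumption $w_i\in A_{p_i}$ for $i=1,\ldots,m$ immediately gives $w_1,\ldots,w_m\in A_\infty$, which is the second hypothesis needed in Theorems \ref{mainthm:3} and \ref{mainthm:5}. Second, I would invoke the inclusion relation \eqref{include}, namely $\prod_{i=1}^m A_{p_i}\subset A_{\vec{P}}\bigcap\prod_{i=1}^m A_\infty$, which the paper has already recorded. For completeness one could sketch the brief H\"older-type computation that gives $\prod_{i=1}^m A_{p_i}\subset A_{\vec P}$: for any ball $B$, write
\begin{equation*}
\bigg(\frac{1}{|B|}\int_B\nu_{\vec{w}}(x)\,dx\bigg)^{1/p}\leq\prod_{i=1}^m\bigg(\frac{1}{|B|}\int_B w_i(x)\,dx\bigg)^{1/{p_i}}
\end{equation*}
by H\"older's inequality applied to the factorization $\nu_{\vec{w}}=\prod_i w_i^{p/p_i}$ with exponents $p_i/p$, and then multiply by $\prod_{i=1}^m\big(\frac{1}{|B|}\int_B w_i(x)^{-p'_i/p_i}\,dx\big)^{1/p'_i}$ and use $w_i\in A_{p_i}$ on each factor to obtain the multilinear $A_{\vec P}$ bound \eqref{multiweight}.

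With these two facts in hand, the hypotheses of Theorems \ref{mainthm:3} and \ref{mainthm:5} are fulfilled: $\theta$ satisfies \eqref{theta1}, $\vec{b}\in\mathrm{BMO}^m$, the exponent conditions on $p_1,\ldots,p_m,p$ are identical, $\vec{w}\in A_{\vec{P}}$, and each $w_i\in A_\infty$. Applying Theorem \ref{mainthm:3} gives the desired boundedness of $\big[\Sigma\vec{b},T_\theta\big]$ from the product Morrey space into $L^{p,\kappa}(\nu_{\vec{w}})$, and applying Theorem \ref{mainthm:5} gives the same conclusion for $\big[\Pi\vec{b},T_\theta\big]$, with the same weight $\nu_{\vec{w}}=\prod_{i=1}^m w_i^{p/p_i}$.

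There is no genuine obstacle here since everything is a direct consequence of results already established; the only thing to be slightly careful about is making explicit that the corollary covers both commutators simultaneously by invoking the two theorems in succession, and noting (as the paper already does via Remark 7.2 of \cite{lerner}) that the inclusion $\prod_{i=1}^m A_{p_i}\subset A_{\vec P}\cap\prod_{i=1}^m A_\infty$ is strict, so the corollary is indeed a proper specialization rather than a reformulation.
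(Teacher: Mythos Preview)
Your proposal is correct and matches the paper's own approach exactly: the paper presents this corollary as an immediate consequence of the inclusion \eqref{include} together with Theorems \ref{mainthm:3} and \ref{mainthm:5}, with no further argument given. Your optional H\"older sketch of $\prod_i A_{p_i}\subset A_{\vec P}$ is fine but not needed, since the paper already records this inclusion before the corollary.
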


\begin{corollary}
Let $m\geq2$ and $\vec{b}\in \mathrm{BMO}^m$. Assume that $\vec{w}=(w_1,\ldots,w_m)\in \prod_{i=1}^m A_1$. If $p_i=1$, $i=1,2,\ldots,m$ and $p=1/m$, then for any given $\lambda>0$ and any ball $B\subset\mathbb R^n$, there exists a constant $C>0$ such that $(\nu_{\vec{w}}=\prod_{i=1}^m w_i^{1/{m}})$
\begin{equation*}
\begin{split}
&\frac{1}{\nu_{\vec{w}}(B)^{m\kappa}}\cdot\Big[\nu_{\vec{w}}\Big(\Big\{x\in B:\big|\big[\Sigma\vec{b},T_\theta\big](\vec{f})(x)\big|>\lambda^m\Big\}\Big)\Big]^m\\
&\leq C\cdot
\prod_{i=1}^m\bigg\|\Phi\bigg(\frac{|f_i|}{\lambda}\bigg)\bigg\|_{(L\log L)^{1,\kappa}(w_i)},
\end{split}
\end{equation*}
provided that $\theta$ satisfies the condition \eqref{theta2}, and
\begin{equation*}
\begin{split}
&\frac{1}{\nu_{\vec{w}}(B)^{m\kappa}}\cdot\Big[\nu_{\vec{w}}\Big(\Big\{x\in B:\big|\big[\Pi\vec{b},T_\theta\big](\vec{f})(x)\big|>\lambda^m\Big\}\Big)\Big]^m\\
&\leq C\cdot
\prod_{i=1}^m\bigg\|\Phi^{(m)}\bigg(\frac{|f_i|}{\lambda}\bigg)\bigg\|_{(L\log L)^{1,\kappa}(w_i)},
\end{split}
\end{equation*}
provided that $\theta$ satisfies the condition \eqref{theta3}.
\end{corollary}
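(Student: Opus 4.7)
The final corollary is a direct endpoint specialization of Theorems \ref{mainthm:4} and \ref{mainthm:6}, obtained by restricting the weight vector to the smaller product class $\prod_{i=1}^m A_1$. The plan is therefore not to redo the full commutator analysis, but to verify that the hypotheses of the two endpoint theorems are automatically satisfied under the assumption $\vec{w}=(w_1,\ldots,w_m)\in\prod_{i=1}^m A_1$, and then simply invoke those two theorems.

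First I would check the two structural hypotheses on $\vec{w}$ that Theorems \ref{mainthm:4} and \ref{mainthm:6} require, namely $\vec{w}\in A_{(1,\dots,1)}$ and $w_i\in A_\infty$ for every $i$. The inclusion \eqref{include} (specialized to $p_1=\cdots=p_m=1$) gives
\begin{equation*}
\prod_{i=1}^m A_1\subset A_{(1,\dots,1)}\bigcap\prod_{i=1}^m A_\infty,
\end{equation*}
so that $\vec{w}\in A_{(1,\dots,1)}$ and simultaneously each $w_i\in A_1\subset A_\infty$. The derivation of this inclusion is exactly the H\"older argument sketched in the paper just before \eqref{include}: the product weight $\nu_{\vec{w}}=\prod_{i=1}^m w_i^{1/m}$ inherits the $A_1$ condition from the factors via the inequality $\prod_i\bigl(\frac{1}{|B|}\int_B w_i\bigr)^{1/m}\le \prod_i\inf_B w_i^{1/m}$, and the remaining reverse-infimum factors in the definition of $A_{(1,\dots,1)}$ are handled identically.

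Once the hypotheses are verified, both conclusions follow immediately: for the symmetric commutator $\big[\Sigma\vec{b},T_\theta\big]$, the regularity assumption $\theta$ satisfies \eqref{theta2} places us exactly in the setting of Theorem \ref{mainthm:4}, which gives the first displayed weak endpoint inequality (with the constant $C$ absorbing the factor $\Phi(\|\vec{b}\|_{\mathrm{BMO}^m})$, since $\vec{b}$ is fixed). For the iterated commutator $\big[\Pi\vec{b},T_\theta\big]$, the stronger regularity assumption \eqref{theta3} together with $\vec{b}\in\mathrm{BMO}^m$ and the weight hypotheses above places us exactly in the setting of Theorem \ref{mainthm:6}, which gives the second displayed estimate with $\Phi^{(m)}$ in place of $\Phi$.

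There is no genuine obstacle here; the only point that needs a line of verification is the inclusion $\prod_{i=1}^m A_1\subset A_{(1,\dots,1)}\cap\prod_{i=1}^m A_\infty$, and this is essentially a one-step H\"older inequality already sketched in the paper. The rest is a mechanical invocation of the two previous theorems, so the proof should be a few lines.
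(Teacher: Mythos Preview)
Your proposal is correct and follows exactly the paper's own approach: the paper states this corollary immediately after Theorems \ref{mainthm:4} and \ref{mainthm:6} with the single remark ``in view of the relation \eqref{include}'', i.e., it obtains the result by the inclusion $\prod_{i=1}^m A_1\subset A_{(1,\dots,1)}\cap\prod_{i=1}^m A_\infty$ and a direct invocation of those two theorems. Your observation that the factor $\Phi(\|\vec{b}\|_{\mathrm{BMO}^m})$ is absorbed into $C$ is also exactly what happens.
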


\section{Appendix}
As pointed out in Remark 1.6, the conclusion of Theorem \ref{comm} also holds with \eqref{theta2} replaced by the weaker condition \eqref{theta1}. In the last section, we shall give the proof of Theorem \ref{commwh} since the proof of Theorem \ref{comm} is quite similar and easier. Let $\vec{b}\in\mathrm{BMO}^m$ and $\big[\Pi\vec{b},T_\theta\big]$ be the iterated commutator of $\theta$-type Calder\'on--Zygmund operator $T_{\theta}$ with $\theta$ satisfying the condition \eqref{theta1}. We prove that if $\vec{w}=(w_1,\ldots,w_m)\in A_{\vec{P}}$, then there exists a constant $C>0$ independent of $\vec{f}$ such that
\begin{equation*}
\big\|\big[\Pi\vec{b},T_\theta\big](\vec{f})\big\|_{L^p(\nu_{\vec{w}})}\le C\prod_{k=1}^m\big\|f_k\big\|_{L^{p_k}(w_k)},
\end{equation*}
where $\nu_{\vec{w}}=\prod_{k=1}^m w_k^{p/{p_k}}$.
The method used here is different from the one in \cite{lu}.
The basic idea of the proof is taken from \cite{alvarez,ding} and \cite[Proposition 3.1]{perez3}.
For $b_k\in\mathrm{BMO}(\mathbb R^n)$ with $1\leq k\leq m$, we denote $F_k(\xi)=e^{\xi[b_k(x)-b_k(y)]}$, $\xi\in\mathbb C$. Then by the analyticity of $F_k(\xi)$ on $\mathbb C$ and the Cauchy integral formula, we get
\begin{equation}\label{cauchy}
\begin{split}
b_k(x)-b_k(y)&=F'_k(0)=\frac{1}{2\pi i}\int_{|\xi|=1}\frac{F_k(\xi)}{\xi^2}\,d\xi\\
&=\frac{1}{2\pi}\int_0^{2\pi}e^{e^{i \varphi_k}[b_k(x)-b_k(y)]}\cdot e^{-i \varphi_k}d\varphi_k.
\end{split}
\end{equation}
Hence, by \eqref{iteratedc} and \eqref{cauchy}, we can see that
\begin{equation*}
\begin{split}
&\big[\Pi\vec{b},T_\theta\big](f_1,\ldots,f_m)(x)\\
&=\int_{(\mathbb R^n)^m}\prod_{k=1}^m\big[b_k(x)-b_k(y_k)\big]K(x,y_1,\dots,y_m)f_1(y_1)\cdots f_m(y_m)\,dy_1\cdots dy_m\\
&=\int_{(\mathbb R^n)^m}\prod_{k=1}^m\bigg(\frac{1}{2\pi}\int_0^{2\pi}e^{e^{i\varphi_k}[b_k(x)-b_k(y_k)]}\cdot e^{-i\varphi_k}d\varphi_k\bigg)\\
&\times K(x,y_1,\dots,y_m)f_1(y_1)\cdots f_m(y_m)\,dy_1\cdots dy_m\\
&=\int_{(\mathbb R^n)^m}\bigg[\frac{1}{(2\pi)^m}\int_{[0,2\pi]^m}\bigg(\prod_{k=1}^me^{e^{i\varphi_k}b_k(x)}\cdot e^{-i\varphi_k}\bigg)d\varphi_1\cdots d\varphi_m\bigg]\\
&\times K(x,y_1,\dots,y_m)\prod_{k=1}^m e^{-e^{i\varphi_k}b_k(y_k)}\cdot f_k(y_k)\,dy_1\cdots dy_m\\
&=\frac{1}{(2\pi)^m}\int_{[0,2\pi]^m}T_{\theta}\Big(e^{-e^{i\varphi_1}b_1}\cdot f_1,\dots,e^{-e^{i\varphi_m}b_m}\cdot f_m\Big)(x)
\bigg(\prod_{k=1}^me^{e^{i\varphi_k}b_k(x)}\cdot e^{-i\varphi_k}\bigg)d\varphi_1\cdots d\varphi_m.
\end{split}
\end{equation*}
From this, it follows that
\begin{equation*}
\begin{split}
&\big|\big[\Pi\vec{b},T_\theta\big](f_1,\ldots,f_m)(x)\big|\\
\leq &\frac{1}{(2\pi)^m}\int_{[0,2\pi]^m}\Big|T_{\theta}\Big(e^{-e^{i\varphi_1}b_1}\cdot f_1,\dots,e^{-e^{i\varphi_m}b_m}\cdot f_m\Big)(x)\Big|\bigg(\prod_{k=1}^me^{\cos\varphi_k b_k(x)}\bigg)d\varphi_1\cdots d\varphi_m.
\end{split}
\end{equation*}
For any $(\varphi_1,\dots,\varphi_m)\in[0,2\pi]^m$, define $m$-tuples
\begin{equation*}
\vec{g}_{\varphi}=\big(g^1_{\varphi_1},\dots,g^m_{\varphi_m}\big),\quad\mbox{where}\;\; g^k_{\varphi_k}=e^{-e^{i\varphi_k}b_k}\cdot f_k,\;\; k=1,2,\dots,m,
\end{equation*}
and define
\begin{equation*}
\vec{w}_{\varphi}=\big(w^1_{\varphi_1},\dots,w^m_{\varphi_m}\big),\quad \mbox{where}\;\;
w^k_{\varphi_k}=w_k\cdot e^{p_k\cos\varphi_k b_k},\;\; k=1,2,\dots,m.
\end{equation*}
Set
\begin{equation*}
\nu^\ast_{\vec{w}}=\prod_{k=1}^m\big(w^k_{\varphi_k}\big)^{p/{p_k}}.
\end{equation*}
Then we have
\begin{equation*}
\nu^\ast_{\vec{w}}=\prod_{k=1}^m\big(w_k\cdot e^{p_k\cos\varphi_k b_k}\big)^{p/{p_k}}
=\nu_{\vec{w}}\cdot\prod_{k=1}^m e^{p\cos\varphi_k b_k}.
\end{equation*}
Using Minkowski's inequality, we thus obtain
\begin{equation*}
\begin{split}
\Big\|\big[\Pi\vec{b},T_\theta\big](\vec{f})\Big\|_{L^p(\nu_{\vec{w}})}
&\leq \frac{1}{(2\pi)^m}\int_{[0,2\pi]^m}\bigg\|T_{\theta}(\vec{g}_{\varphi})\prod_{k=1}^me^{\cos\varphi_k b_k}\bigg\|_{L^p(\nu_{\vec{w}})}
d\varphi_1\cdots d\varphi_m\\
&=\frac{1}{(2\pi)^m}\int_{[0,2\pi]^m}\big\|T_{\theta}(\vec{g}_{\varphi})\big\|_{L^p(\nu^\ast_{\vec{w}})}
d\varphi_1\cdots d\varphi_m.
\end{split}
\end{equation*}
Since $\vec{w}=(w_1,\ldots,w_m)\in A_{\vec{P}}$, we have $\nu_{\vec{w}}\in A_{mp}$ and $w_k^{1-p'_k}\in A_{mp'_k}$,$k=1,2,\ldots,m$, by using Lemma \ref{multi}. Hence, by the self-improvement property of $A_p$ weights (see \cite{duoand,garcia}), there exist some positive numbers $\varepsilon',\varepsilon_1,\dots,\varepsilon_m>0$ such that
\begin{equation*}
\nu_{\vec{w}}^{1+\varepsilon'}\in A_{mp}\quad \&\quad\big(w_k^{1-p'_k}\big)^{1+\varepsilon_k}\in A_{mp'_k},~k=1,2,\dots,m.
\end{equation*}
Now choose
\begin{equation*}
\varepsilon:=\min\big\{\varepsilon',\varepsilon_1,\dots,\varepsilon_m\big\}.
\end{equation*}
Then we have
\begin{equation*}
\nu_{\vec{w}}^{1+\varepsilon}\in A_{mp}\quad \&\quad\big(w_k^{1-p'_k}\big)^{1+\varepsilon}=\big(w_k^{1+\varepsilon}\big)^{1-p'_k}\in A_{mp'_k},~k=1,2,\dots,m,
\end{equation*}
which implies $(\vec{w})^{1+\varepsilon}:=(w_1^{1+\varepsilon},\ldots,w_m^{1+\varepsilon})\in A_{\vec{P}}$ by using Lemma \ref{multi} again.
Note that
\begin{equation*}
\prod_{k=1}^m\big(w_k^{1+\varepsilon}\big)^{p/{p_k}}=\bigg(\prod_{k=1}^m w_k^{p/{p_k}}\bigg)^{1+\varepsilon}
=(\nu_{\vec{w}})^{1+\varepsilon}.
\end{equation*}
Thus by Theorem \ref{strong},
\begin{equation}\label{inter1}
T_{\theta}:L^{p_1}(w_1^{1+\varepsilon})\times\cdots\times L^{p_m}(w_m^{1+\varepsilon})
\longrightarrow L^p((\nu_{\vec{w}})^{1+\varepsilon}).
\end{equation}
On the other hand, for any fixed $\eta>0$, it is known that when $b\in\mathrm{BMO}(\mathbb R^n)$ with $\|b\|_{*}<\min\{C_2/{\eta},C_2(p-1)/{\eta}\}$, where $C_2$ is the constant in the John--Nirenberg inequality mentioned above, we have $e^{\eta b(x)}\in A_p$ for $1<p<+\infty$
(see \cite[Lemma 1]{ding}). For $b_k\in\mathrm{BMO}(\mathbb R^n)$($1\leq k\leq m$), we now choose
\begin{equation*}
\eta_k:=\frac{p_k(1+\varepsilon)}{\varepsilon}.
\end{equation*}
For such $\eta_k>0$, we may assume that $\|b_k\|_{*}<\min\{C_2/{\eta_k},C_2(p_k-1)/{\eta_k}\}$. The general case can be proved using the linearity of $T_{\theta}$ as well. Then for any $\varphi_k\in[0,2\pi]$, we have $\cos\varphi_k\cdot b_k(x)\in \mathrm{BMO}(\mathbb R^n)$, and
\begin{equation*}
\|\cos\varphi_k\cdot b_k\|_{*}\leq\|b_k\|_{*}<\min\big\{C_2/{\eta_k},C_2(p_k-1)/{\eta_k}\big\},
\end{equation*}
which implies that each $\nu_k(x):=e^{\eta_k\cos\varphi_kb_k(x)}\in A_{p_k}$ for $1<p_k<+\infty$, $k=1,2,\dots,m$. Notice that
\begin{equation*}
\prod_{k=1}^m\big(e^{\frac{1+\varepsilon}{\varepsilon}p_k\cos\varphi_kb_k}\big)^{p/{p_k}}=
\prod_{k=1}^m\big(e^{\frac{1+\varepsilon}{\varepsilon}p\cos\varphi_kb_k}\big)=
\bigg(\prod_{k=1}^m e^{p\cos\varphi_kb_k}\bigg)^{\frac{1+\varepsilon}{\varepsilon}}.
\end{equation*}
This fact along with \eqref{include} and Theorem \ref{strong} gives us that
\begin{equation}\label{inter2}
T_{\theta}:L^{p_1}\big(e^{\frac{1+\varepsilon}{\varepsilon}p_1\cos\varphi_1b_1}\big)
\times\cdots
\times L^{p_m}\big(e^{\frac{1+\varepsilon}{\varepsilon}p_m\cos\varphi_mb_m}\big)\longrightarrow
L^p\Big(\big(\prod_{k=1}^m e^{p\cos\varphi_kb_k}\big)^{\frac{1+\varepsilon}{\varepsilon}}\Big).
\end{equation}
Interpolating between \eqref{inter1} and \eqref{inter2}(see \cite{bergh,stein}) we obtain that
\begin{equation*}
T_{\theta}:L^{p_1}\big(w_1e^{p_1\cos\varphi_1b_1}\big)
\times\cdots
\times L^{p_m}\big(w_me^{p_m\cos\varphi_mb_m}\big)\longrightarrow
L^p\Big(\nu_{\vec{w}}\prod_{k=1}^m e^{p\cos\varphi_kb_k}\Big),
\end{equation*}
that is
\begin{equation}\label{inter3}
T_{\theta}:L^{p_1}\big(w^1_{\varphi_1}\big)
\times\cdots
\times L^{p_m}\big(w^m_{\varphi_m}\big)\longrightarrow
L^p\big(\nu^\ast_{\vec{w}}\big).
\end{equation}
By \eqref{inter3} we have
\begin{equation}
\big\|T_{\theta}(\vec{g}_{\varphi})\big\|_{L^p(\nu^\ast_{\vec{w}})}
\leq C\prod_{k=1}^m\big\|g^k_{\varphi_k}\big\|_{L^{p_k}(w^k_{\varphi_k})}.
\end{equation}
Since $f_k\in L^{p_k}(w_k)$, it is easy to check that for any $\varphi_k\in[0,2\pi]$,
\begin{equation*}
\begin{split}
\big\|g^k_{\varphi_k}\big\|_{L^{p_k}(w^k_{\varphi_k})}
&=\bigg(\int_{\mathbb R^n}\big|g^k_{\varphi_k}(x)\big|^{p_k} w_k(x)\cdot e^{p_k\cos\varphi_k b_k(x)}dx\bigg)^{1/{p_k}}\\
&=\bigg(\int_{\mathbb R^n}\big|f_k(x)\big|^{p_k}e^{-p_k\cos\varphi_k b_k(x)}\cdot w_k(x)\cdot e^{p_k\cos\varphi_k b_k(x)}dx\bigg)^{1/{p_k}}\\
&=\bigg(\int_{\mathbb R^n}\big|f_k(x)\big|^{p_k} w_k(x)dx\bigg)^{1/{p_k}}=\big\|f_k\big\|_{L^{p_k}(w_k)}.
\end{split}
\end{equation*}
Therefore
\begin{equation*}
\begin{split}
\Big\|\big[\Pi\vec{b},T_\theta\big](\vec{f})\Big\|_{L^p(\nu_{\vec{w}})}
&\leq C\frac{1}{(2\pi)^m}\int_{[0,2\pi]^m}\prod_{k=1}^m\big\|g^k_{\varphi_k}\big\|_{L^{p_k}(w^k_{\varphi_k})}
d\varphi_1\cdots d\varphi_m\\
&=C\frac{1}{(2\pi)^m}\int_{[0,2\pi]^m}\prod_{k=1}^m\big\|f_k\big\|_{L^{p_k}(w_k)}
d\varphi_1\cdots d\varphi_m\\
&\leq C\prod_{k=1}^m\big\|f_k\big\|_{L^{p_k}(w_k)},
\end{split}
\end{equation*}
which is our desired estimate.

\subsection*{Acknowledgment}
This work was supported by the Natural Science Foundation of China (Grant No. XJEDU2020Y002 and 2022D01C407).

\end{document}